\newtheorem{theo}{Theorem}[section]
\newtheorem{lemm}[theo]{Lemma}
\newtheorem{cor}[theo]{Corollary}
\theoremstyle{definition}
\newtheorem{defi}[theo]{Definition}
\newtheorem{rem}[theo]{Remark}
\newtheorem{assum}{Assumption}
\newtheorem*{SVC}{Problem (SVC)}
\newcommand{\bA}{\mathbb{A}}
\newcommand{\bE}{\mathbb{E}}
\newcommand{\bF}{\mathbb{F}}
\newcommand{\bN}{\mathbb{N}}
\newcommand{\bP}{\mathbb{P}}
\newcommand{\bR}{\mathbb{R}}
\newcommand{\bS}{\mathbb{S}}
\newcommand{\bT}{\mathbb{T}}
\newcommand{\bX}{\mathbb{X}}
\newcommand{\cD}{\mathcal{D}}
\newcommand{\cF}{\mathcal{F}}
\newcommand{\cI}{\mathcal{I}}
\newcommand{\cL}{\mathcal{L}}
\newcommand{\cM}{\mathcal{M}}
\newcommand{\cN}{\mathcal{N}}
\newcommand{\cP}{\mathcal{P}}
\newcommand{\cS}{\mathcal{S}}
\newcommand{\cT}{\mathcal{T}}
\newcommand{\cU}{\mathcal{U}}
\newcommand{\sL}{\mathscr{L}}
\newcommand{\sR}{\mathscr{R}}
\newcommand{\ep}{\varepsilon}
\newcommand{\diff}{\mathrm{d}}
\newcommand{\op}{\mathrm{op}}
\newcommand{\sym}{\mathrm{sym}}
\newcommand{\1}{\mbox{\rm{1}}\hspace{-0.25em}\mbox{\rm{l}}}
\newcommand{\lint}{\mathalpha{\ltimes}}
\newcommand{\rint}{\mathalpha{\rtimes}}
\providecommand{\keywords}[1]{\textbf{Keywords:} #1}
\def\widebar{\accentset{{\cc@style\underline{\mskip10mu}}}}
\numberwithin{equation}{section}
\title{Linear-quadratic stochastic Volterra controls II: Optimal strategies and Riccati--Volterra equations}
\author{
Yushi Hamaguchi\footnote{Corresponding Author. Graduate School of Engineering Science, Department of Systems Innovation, Osaka University. 1-3, Machikaneyama, Toyonaka, Osaka, Japan (Email: \href{mailto:hmgch2950@gmail.com}{hmgch2950@gmail.com}). This author was supported by JSPS KAKENHI Grant Number 22K13958.}
~~ and ~~
Tianxiao Wang\footnote{School of Mathematics, Sichuan University. Chengdu, P.\ R.\ China (Email: \href{mailto:wtxiao2014@scu.edu.cn}{wtxiao2014@scu.edu.cn}). This author was supported by National Natural Science Foundation of China (No.\ 11971332 and 11931011) and the Science Development Project of Sichuan University under grant 2020SCUNL201.}
}
\begin{document}
\maketitle


\begin{abstract}
In this paper, we study linear-quadratic control problems for stochastic Volterra integral equations with singular and non-convolution-type coefficients. The weighting matrices in the cost functional are not assumed to be non-negative definite. From a new viewpoint, we formulate a framework of causal feedback strategies. The existence and the uniqueness of a causal feedback optimal strategy are characterized by means of the corresponding Riccati--Volterra equation.
\end{abstract}


\keywords
Linear-quadratic control; stochastic Volterra integral equation; Riccati--Volterra equation.


\textbf{2020 Mathematics Subject Classification}: 60H20; 45A05; 93E20; 93B52.




\section{Introduction}

Linear-quadratic (LQ) control problems are special classes of optimal control problems described by a linear state dynamics and a quadratic cost functional. In the continuous time setting, the state dynamics is assumed to be governed by a controlled differential/integral equation. In this paper, we consider the following controlled linear stochastic Volterra integral equation (SVIE):
\begin{equation}\label{eq_state}
\begin{split}
	&X(t)=x(t)+\int^t_{t_0}\{A(t,s)X(s)+B(t,s)u(s)+b(t,s)\}\,\diff s+\int^t_{t_0}\{C(t,s)X(s)+D(t,s)u(s)+\sigma(t,s)\}\,\diff W(s),\\
	&\hspace{8cm}t\in(t_0,T),
\end{split}
\end{equation}
where $u$ is a control process, $x$ is a given deterministic function called the free term (which is also called the forcing term), $W$ is a Brownian motion, $A,B,C$ and $D$ are matrix-valued deterministic coefficients, and $b$ and $\sigma$ are vector-valued stochastic inhomogeneous terms. The cost functional is given by the following quadratic functional:
\begin{equation}\label{eq_cost}
	J(t_0,x;u):=\bE\left[\int^T_{t_0}\left\{\left\langle\begin{pmatrix}Q(t)&S(t)^\top\\S(t)&R(t)\end{pmatrix}\begin{pmatrix}X(t)\\u(t)\end{pmatrix},\begin{pmatrix}X(t)\\u(t)\end{pmatrix}\right\rangle+2\left\langle\begin{pmatrix}q(t)\\\rho(t)\end{pmatrix},\begin{pmatrix}X(t)\\u(t)\end{pmatrix}\right\rangle\right\}\,\diff t\right],
\end{equation}
where $Q,S$ and $R$ are matrix-valued deterministic functions, and $q$ and $\rho$ are vector-valued adapted processes. The LQ control problem for an SVIE, which we call an \emph{LQ stochastic Volterra control problem}, is to minimize the quadratic cost functional $J(t_0,x;u)$ over all control process $u$ subject to the state dynamics \eqref{eq_state}.

The controlled SVIE \eqref{eq_state} is a Volterra-type extension of a controlled linear stochastic differential equation (SDE):
\begin{equation}\label{eq_state_SDE}
	\begin{dcases}
	\,\diff X(s)=\{A(s)X(s)+B(s)u(s)+b(s)\}\,\diff s+\{C(s)X(s)+D(s)u(s)+\sigma(s)\}\,\diff W(s),\ s\in[t_0,T],\\
	X(t_0)=x,
	\end{dcases}
\end{equation}
with $x$ being a constant. LQ control problems for SDEs were first studied by Wonham \cite{Wo68} in 1968 and followed by many researchers; see \cite[Chapter 6]{YoZh99} and \cite{SuYo20} for systematic studies and the recent developments of LQ control theory for SDEs. In this context, there are at least two different frameworks, namely, the \emph{open-loop framework} and the \emph{closed-loop framework}. On the one hand, in the open-loop framework, the problem is to find, for each fixed input condition $(t_0,x)$, a control process $\hat{u}$ such that
\begin{equation*}
	J(t_0,x;\hat{u})\leq J(t_0,x;u)
\end{equation*}
for any other control processes $u$. Such a control process is called an open-loop optimal control. The open-loop optimal control is characterized by a coupled system of an SDE and a backward SDE (BSDE) (see \cite[Section 2.3]{SuYo20}). On the other hand, in the closed-loop framework, the problem is to find an optimal ``strategy'' which a controller uses to select a control action based on his/her state. More precisely, in the LQ control problem for SDE \eqref{eq_state_SDE} with the cost functional \eqref{eq_cost}, consider a matrix-valued deterministic function $\Xi$ and a stochastic inhomogeneous term $v$ which are independent of the choice of input conditions $(t_0,x)$. The pair $(\Xi,v)$ is called a closed-loop strategy (which is also called a state-feedback strategy). Then for each input condition $(t_0,x)$, consider the following closed-loop system of the controlled SDE \eqref{eq_state_SDE}:
\begin{equation}\label{eq_closed-loop_system_SDE}
	\begin{dcases}
	\,\diff X^{t_0,x}(s)=\{A(s)X^{t_0,x}(s)+B(s)u^{t_0,x}(s)+b(s)\}\,\diff s\\
	\hspace{3cm}+\{C(s)X^{t_0,x}(s)+D(s)u^{t_0,x}(s)+\sigma(s)\}\,\diff W(s),\ s\in[t_0,T],\\
	X^{t_0,x}(t_0)=x,\\
	u^{t_0,x}(s)=\Xi(s)X^{t_0,x}(s)+v(s),\ s\in[t_0,T],
	\end{dcases}
\end{equation}
or equivalently the SDE
\begin{equation*}
\begin{dcases}
	\,\diff X^{t_0,x}(s)=\{(A(s)+B(s)\Xi(s))X^{t_0,x}(s)+B(s)v(s)+b(s)\}\,\diff s\\
	\hspace{3cm}+\{(C(s)+D(s)\Xi(s))X^{t_0,x}(s)+D(s)v(s)+\sigma(s)\}\,\diff W(s),\ s\in[t_0,T],\\
	X^{t_0,x}(t_0)=x.
	\end{dcases}
\end{equation*}
We note that the above system is an equation for the state process $X=X^{t_0,x}$, and the control process $u=u^{t_0,x}$ is obtained as the outcome of the strategy $(\Xi,v)$ by inserting the solution $X^{t_0,x}$ into the expression $u^{t_0,x}=\Xi X^{t_0,x}+v$. In order to clarify the dependency of the outcome $u^{t_0,x}$ on the closed-loop strategy $(\Xi,v)$ and the input condition $(t_0,x)$, we write $u^{t_0,x}=(\Xi,v)[t_0,x]$. The problem in the closed-loop framework is to find a closed-loop strategy $(\hat{\Xi},\hat{v})$ such that
\begin{equation*}
	J(t_0,x;(\hat{\Xi},\hat{v})[t_0,x])\leq J(t_0,x;(\Xi,v)[t_0,x])
\end{equation*}
for any other closed-loop strategy $(\Xi,v)$ and any input condition $(t_0,x)$. In this case, the pair $(\hat{\Xi},\hat{v})$ is called a closed-loop optimal strategy. The closed-loop optimality is closely related to the solvability of a Riccati (differential) equation and a BSDE (see \cite[Section 2.4]{SuYo20}). It is worth to mention that if $(\hat{\Xi},\hat{v})$ is a closed-loop optimal strategy, then the outcome $\hat{u}^{t_0,x}=(\hat{\Xi},\hat{v})[t_0,x]$ is an open-loop optimal control for every input condition $(t_0,x)$. Therefore, each closed-loop optimal strategy can be seen as a state-feedback representation of an open-loop optimal control.

Optimal control problems of (non-linear) SVIEs were first studied by Yong \cite{Yo08}. By means of the maximum principle, he characterized the open-loop optimal control by the so-called Type-II backward stochastic Volterra integral equation (Type-II BSVIE) which is a Volterra-type extension of a BSDE. Since then, several researchers have tried to solve optimal control problems for SVIEs in the open-loop framework; see \cite{AgOk15,ChYo07,Ha21+,Ha21++,ShWaYo15,ShWeXi20,WaT18,WaT20,WaTZh17}. On the other hand, in the special case of SVIEs with completely monotone and convolution-type kernels, several kinds of feedback representations of the optimal controls were investigated by \cite{AbMiPh21,BoCoMa12,CoMa15}. Specifically, Abi Jaber, Miller and Pham \cite{AbMiPh21} studied LQ stochastic Volterra control problems with completely monotone and convolution-type kernels. Based on an infinite-dimensional approach, they obtained a kind of a linear feedback representation of the optimal control; see also \cite{AbMiPh21+} for the study on the associated integral operator Riccati equation. We emphasize that the approaches of \cite{AbMiPh21,AbMiPh21+,BoCoMa12,CoMa15} heavily rely on the special structure of the completely monotone and convolution-type kernels, and they cannot be applied to the non-convolution-type SVIE \eqref{eq_state}.

The purpose of this paper is to formulate and investigate the closed-loop framework of LQ stochastic Volterra control problems with general (singular and non-convolution-type) coefficients. In this framework, a difficulty comes from the definition of the ``strategy''. Indeed, as discussed by Pritchard and You \cite{PrYo96} in the deterministic LQ Volterra control problems, the class of state-feedback strategies of the form $u^{t_0,x}(t)=\Xi(t)X^{t_0,x}(t)+v(t)$ is not sufficient to capture the Volterra structure of the state dynamics (see also \cite{HaLiYo21} for the study of deterministic LQ Volterra control problems). In our previous paper \cite{HaWa1}, inspired by the so-called causal projection approach of \cite{HaLiYo21,PrYo96}, we introduced the notion of \emph{causal feedback strategies} for the linear controlled SVIE \eqref{eq_state}. This is a feedback strategy involving not only the state process $X(t)$, but also the \emph{forward state process} defined by
\begin{equation*}
	\Theta(s,t)=x(s)+\int^t_{t_0}\{A(s,r)X(r)+B(s,r)u(r)+b(s,r)\}\,\diff r+\int^t_{t_0}\{C(s,r)X(r)+D(s,r)u(r)+\sigma(s,r)\}\,\diff W(r)
\end{equation*}
for $(s,t)\in\triangle_2(t_0,T):=\{(s,t)\,|\,t_0<t<s<T\}$. The forward state $\Theta(s,t)$ can be seen as the causal projection of the original controlled SVIE \eqref{eq_state} which is determined by information of $X$ and $u$ up to the current time $t$. A causal feedback strategy consists of a triplet $(\Xi,\Gamma,v)$ of matrix-valued deterministic functions $\Xi$ and $\Gamma$ and a stochastic inhomogeneous term $v$ which leads to the following closed-loop system: for each input condition $(t_0,x)$,
\begin{equation}\label{eq_closed-loop_system}
	\begin{dcases}
	X^{t_0,x}(t)=x(t)+\int^t_{t_0}\{A(t,s)X^{t_0,x}(s)+B(t,s)u^{t_0,x}(s)+b(t,s)\}\,\diff s\\
	\hspace{3cm}+\int^t_{t_0}\{C(t,s)X^{t_0,x}(s)+D(t,s)u^{t_0,x}(s)+\sigma(t,s)\}\,\diff W(s),\ t\in(t_0,T),\\
	\Theta^{t_0,x}(s,t)=x(s)+\int^t_{t_0}\{A(s,r)X^{t_0,x}(r)+B(s,r)u^{t_0,x}(r)+b(s,r)\}\,\diff r\\
	\hspace{3cm}+\int^t_{t_0}\{C(s,r)X^{t_0,x}(r)+D(s,r)u^{t_0,x}(r)+\sigma(s,r)\}\,\diff W(r),\ (s,t)\in\triangle_2(t_0,T),\\
	u^{t_0,x}(t)=\Xi(t)X^{t_0,x}(t)+\int^T_t\Gamma(s,t)\Theta^{t_0,x}(s,t)\,\diff s+v(t),\ t\in(t_0,T).
	\end{dcases}
\end{equation}
We say that a pair $(X^{t_0,x},\Theta^{t_0,x})$ satisfying the above system a \emph{causal feedback solution} of the controlled SVIE \eqref{eq_state} at $(t_0,x)$ corresponding to the causal feedback strategy $(\Xi,\Gamma,v)$.
This framework is different from that of \cite{AbMiPh21,AbMiPh21+,BoCoMa12,CoMa15} and more reasonable in view of the (generalized) flow property and the time-consistency of the state dynamics. For more detailed theory on the causal feedback strategies and the associated causal feedback solutions, see our previous paper \cite{HaWa1}. In order to clarify the dependency of the outcome $u^{t_0,x}$ on the causal feedback strategy $(\Xi,\Gamma,v)$ and the input condition $(t_0,x)$, we write $u^{t_0,x}=(\Xi,\Gamma,v)[t_0,x]$. Our problem is to find a causal feedback strategy $(\hat{\Xi},\hat{\Gamma},\hat{v})$ such that
\begin{equation*}
	J(t_0,x;(\hat{\Xi},\hat{\Gamma},\hat{v})[t_0,x])\leq J(t_0,x;(\Xi,\Gamma,v)[t_0,x])
\end{equation*}
for any other causal feedback strategy $(\Xi,\Gamma,v)$ and any input condition $(t_0,x)$. In this case, we call the triplet $(\hat{\Xi},\hat{\Gamma},\hat{v})$ a \emph{causal feedback optimal strategy}.

The main contributions of this paper are the following two points:
\begin{itemize}
\item[(i)]
We show that the existence of a causal feedback optimal strategy is equivalent to the ``regular solvability'' of a \emph{Riccati--Volterra equation} \eqref{eq_Riccati--Volterra} together with an additional condition for the solution to a \emph{Type-II extended BSVIE} (Type-II EBSVIE) \eqref{eq_Type-II_EBSVIE+}. The causal feedback optimal strategy and the associated value functional are expressed by the solutions of these equations. See \cref{theo_optimality}.
\item[(ii)]
We show that the existence of a ``strongly regular solution'' of the Riccati--Volterra equation \eqref{eq_Riccati--Volterra} is equivalent to the uniform convexity of the cost functional. These two equivalent conditions imply the existence and the uniqueness of the causal feedback optimal strategy. See \cref{theo_strongly_regular_solvability}.
\end{itemize}
Furthermore, we found the following interesting fact:
\begin{itemize}
\item[\underline{Fact.}]
If the control does not enter the drift part, that is, if $B=0$, then the causal feedback optimal strategy $(\hat{\Xi},\hat{\Gamma},\hat{v})$ is of a state-feedback form in the sense that $\hat{\Gamma}=0$. Moreover, if in addition the inhomogeneous terms $b,\sigma,q$ and $\rho$ are zeros, then it is of a Markovian state-feedback form in the sense that $\hat{\Gamma}=0$ and $\hat{v}=0$. See \cref{rem_B=0}.
\end{itemize}
This is a surprising consequence since, even in the homogeneous case, the state process is highly non-Markovian and being non-semimartingale due to the Volterra structure. Very recently, a similar fact was also found in an independent work of Wang, Yong and Zhou \cite{WaYoZh22} by a different method. In \cite{WaYoZh22}, they considered an LQ stochastic Volterra control problem (involving a terminal cost) in the open-loop framework, where the coefficients $A,B,C$ and $D$ are non-convolution-type but assumed to be regular (i.e.\ bounded and differentiable), the inhomogeneous terms $b,\sigma,q$ and $\rho$ are zeros, $S=0$, and the weighting matrices $Q$ and $R$ are assumed to be non-negative and strictly positive definite, respectively. By a dynamic programming method and a decoupling technique, they derived a causal feedback represention of the open-loop optimal control by means of a path-dependent Riccati equation which is different from our Riccati--Volterra equation \eqref{eq_Riccati--Volterra}.

Besides the fact that the coefficients of the controlled SVIE \eqref{eq_state} are non-convolution-type and singular, our cost functional \eqref{eq_cost} is also quit general compared to \cite{AbMiPh21,AbMiPh21+,WaYoZh22} since we do not a priori impose any non-negativity conditions on the matrix-valued functions $Q$ and $R$. In particular, under standard non-negativity assumptions for $Q$ and $R$ which are similar to \cite{AbMiPh21,AbMiPh21+,WaYoZh22}, we see that the Riccati--Volterra equation is strongly regularly solvable, which implies that there exists a unique causal feedback optimal strategy (see \cref{cor_standard_optimal}). Our results (i) and (ii) mentioned above are extensions of the known results \cite{SuLiYo16,SuYo14} of LQ control problems for SDEs (see also \cite[Section 2.4]{SuYo20}) to our LQ stochastic Volterra control problems. Type-II EBSVIEs were introduced and investigated in our previous paper \cite{HaWa1}. They are extensions of a class of Type-II BSVIEs introduced by Yong \cite{Yo08} to the framework of causal feedback solutions of controlled SVIEs. The Riccati--Volterra equation \eqref{eq_Riccati--Volterra} is a coupled system of Riccati-type Volterra integro-differential equations which appears for the first time in the literature. This is closely related to \emph{Lyapunov--Volterra equations} which were also introduced in our previous paper \cite{HaWa1}.

The rest of this paper is organized as follows: In \cref{section_SVC}, we formulate the LQ stochastic Volterra control problems in the framework of causal feedback strategies. In \cref{section_preliminaries}, we recall the results of our previous work \cite{HaWa1}. Specifically, we introduce Type-II EBSVIEs and Lyapunov--Volterra equations which play fundamental roles in the present paper. In \cref{section_representation_cost}, we give a useful representation of the cost functional. In \cref{section_optimality}, we introduce the Riccati--Volterra equation and prove the first main result (\cref{theo_optimality}). In \cref{section_strongly_regular_solvability}, we investigate the (strongly regular) solvability of the Riccati--Volterra equation and prove the second main result (\cref{theo_strongly_regular_solvability}). Some auxiliary lemmas are proved in \hyperref[appendix]{Appendix}.


\subsection*{Notation}

$(\Omega,\cF,\bP)$ is a complete probability space, and $W$ is a one-dimensional Brownian motion. $\bF=(\cF_t)_{t\geq0}$ denotes the $\bP$-augmented filtration generated by $W$. $\bE[\cdot]$ denotes the expectation. Throughout this paper, $\bE[\cdot]^{1/2}$ denotes the square root of the expectation $\bE[\cdot]$, not the expectation of the square root. For each $0\leq t_0<T<\infty$, we define
\begin{align*}
	&\triangle_2(t_0,T):=\{(t,s)\in(t_0,T)^2\,|\,T>t>s>t_0\},\ \text{(a triangle region)}\\
	&\square_3(t_0,T):=\{(s_1,s_2,t)\in(t_0,T)^3\,|\,t<s_1\wedge s_2\}.\ \text{(a square pyramid region)}
\end{align*}
For each matrix $M\in\bR^{d_1\times d_2}$ with $d_1,d_2\in\bN$, $|M|$ denotes the Frobenius norm, $M^\top\in\bR^{d_2\times d_1}$ denotes the transpose, $M^\dagger\in\bR^{d_2\times d_1}$ denotes the Moore--Penrose pseudoinverse, and $\sR(M)$ denotes the range. For each $d\in\bN$, $\bS^d$ denotes the set of $(d\times d)$-symmetric matrices. We define $\bR^d:=\bR^{d \times 1}$, that is, each element of $\bR^d$ is understood as a column vector. We denote by $\langle\cdot,\cdot\rangle$ the usual inner product on a Euclidean space. $I_d$ denotes the $(d\times d)$-identity matrix. For each set $\Lambda$, $\1_\Lambda$ denotes the indicator function.

For each $0\leq t_0<T<\infty$ and $d_1,d_2\in\bN$, we define some spaces of stochastic (and deterministic) processes as follows:
\begin{itemize}
\item
$(L^2_\bF(t_0,T;\bR^{d_1\times d_2}),\|\cdot\|_{L^2_\bF(t_0,T)})$ is the Hilbert space of $\bR^{d_1\times d_2}$-valued, square-integrable and $\bF$-progressively measurable processes on $(t_0,T)$.
\item
$(L^{2,1}_\bF(\triangle_2(t_0,T);\bR^{d_1\times d_2}),\|\cdot\|_{L^{2,1}_\bF(\triangle_2(t_0,T))})$ is the Banach space of $\bR^{d_1\times d_2}$-valued and measurable processes $\xi$ on $\triangle_2(t_0,T)$ such that $\xi(t,\cdot)$ is $\bF$-progressively measurable on $(t_0,t)$ for each $t\in(t_0,T)$ and that $\|\xi\|_{L^{2,1}_\bF(\triangle_2(t_0,T))}<\infty$, where
\begin{equation*}
	\|\xi\|_{L^{2,1}_\bF(\triangle_2(t_0,T))}:=\bE\Bigl[\int^T_{t_0}\Bigl(\int^t_{t_0}|\xi(t,s)|\,\diff s\Bigr)^2\,\diff t\Bigr]^{1/2}.
\end{equation*}
\item
$(L^2_\bF(\triangle_2(t_0,T);\bR^{d_1\times d_2}),\|\cdot\|_{L^2_\bF(\triangle_2(t_0,T))})$ is the Hilbert space of $\xi\in L^{2,1}_\bF(\triangle_2(t_0,T);\bR^{d_1\times d_2})$ such that $\|\xi\|_{L^2_\bF(\triangle_2(t_0,T))}<\infty$, where
\begin{equation*}
	\|\xi\|_{L^2_\bF(\triangle_2(t_0,T))}:=\bE\Bigl[\int^T_{t_0}\!\!\int^t_{t_0}|\xi(t,s)|^2\,\diff s\!\,\diff t\Bigr]^{1/2}.
\end{equation*}
\item
$(L^2_{\bF,\mathrm{c}}(\triangle_2(t_0,T);\bR^{d_1\times d_2}),\|\cdot\|_{L^2_{\bF,\mathrm{c}}(\triangle_2(t_0,T))})$ is the Banach space of $\xi\in L^2_\bF(\triangle_2(t_0,T);\bR^{d_1\times d_2})$ such that $s\mapsto\xi(t,s)$ is uniformly continuous on $(t_0,t)$ with the limits $\xi(t,t):=\lim_{s\uparrow t}\xi(t,s)$ and $\xi(t,t_0):=\lim_{s\downarrow t_0}\xi(t,s)$ exist for a.e.\ $t\in(t_0,T)$ a.s.\ and that $\|\xi\|_{L^2_{\bF,\mathrm{c}}(\triangle_2(t_0,T))}<\infty$, where
\begin{equation*}
	\|\xi\|_{L^2_{\bF,\mathrm{c}}(\triangle_2(t_0,T))}:=\bE\Bigl[\int^T_{t_0}\sup_{s\in[t_0,t]}|\xi(t,s)|^2\,\diff t\Bigr]^{1/2}.
\end{equation*}
\item
$(L^\infty(t_0,T;\bR^{d_1\times d_2}),\|\cdot\|_{L^\infty(t_0,T)})$ is the Banach space of  $\bR^{d_1\times d_2}$-valued essentially bounded measurable functions on $(t_0,T)$.
\item
For $\bT=(t_0,T),(t_0,T)^2,\triangle_2(t_0,T)$ and $\square_3(t_0,T)$, $(L^2(\bT;\bR^{d_1\times d_2}),\|\cdot\|_{L^2(\bT)})$ is the Hilbert space of $\bR^{d_1\times d_2}$-valued and square-integrable deterministic functions on $\bT$.
\item
$(L^{2,1}(\triangle_2(t_0,T);\bR^{d_1\times d_2}),\|\cdot\|_{L^{2,1}(\triangle_2(t_0,T))})$ is the Banach space of $\bR^{d_1\times d_2}$-valued deterministic functions $f$ on $\triangle_2(t_0,T)$ such that $\|f\|_{L^{2,1}(\triangle_2(t_0,T))}<\infty$, where
\begin{equation*}
	\|f\|_{L^{2,1}(\triangle_2(t_0,T))}:=\Bigl(\int^T_{t_0}\Bigl(\int^t_{t_0}|f(t,s)|\,\diff s\Bigr)^2\,\diff t\Bigr)^{1/2}.
\end{equation*}
\item
$(L^{2,2,1}(\square_3(t_0,T);\bR^{d_1\times d_2}),\|\cdot\|_{L^{2,2,1}(\square_3(t_0,T))})$ is the Banach space of $\bR^{d_1\times d_2}$-valued deterministic functions $f$ on $\square_3(t_0,T)$ such that $\|f\|_{L^{2,2,1}(\square_3(t_0,T))}<\infty$, where
\begin{equation*}
	\|f\|_{L^{2,2,1}(\square_3(t_0,T))}:=\Bigl(\int^T_{t_0}\!\!\int^T_{t_0}\Bigl(\int^{s_1\wedge s_2}_{t_0}|f(s_1,s_2,t)|\,\diff t\Bigr)^2\,\diff s_1\!\,\diff s_2\Bigr)^{1/2}<\infty.
\end{equation*}
\item
$L^{2,2,1}_\sym(\square_3(t_0,T);\bR^{d\times d})$ is the set of $f\in L^{2,2,1}(\square_3(t_0,T);\bR^{d\times d})$ such that $f(s_1,s_2,t)=f(s_2,s_1,t)^\top$ for a.e.\ $(s_1,s_2,t)\in\square_3(0,T)$. It is easy to see that $L^{2,2,1}_\sym(\square_3(t_0,T);\bR^{d\times d})$ is a closed subspace of $L^{2,2,1}(\square_3(t_0,T);\bR^{d_1\times d_2})$.
\item
$\sL^2(\triangle_2(t_0,T);\bR^{d_1\times d_2})$ is the set of $f\in L^2(\triangle_2(t_0,T);\bR^{d_1\times d_2})$ satisfying the following two conditions:
\begin{itemize}
\item
it holds that
\begin{equation*}
	\|f\|_{\sL^2(\triangle_2(t_0,T))}:=\underset{t\in(t_0,T)}{\mathrm{ess\,sup}}\Bigl(\int^T_t|f(s,t)|^2\,\diff s\Bigr)^{1/2}<\infty;
\end{equation*}
\item
for any $\ep>0$, there exists a finite partition $\{U_i\}^m_{i=0}$ of $(t_0,T)$ with $t_0=U_0<U_1<\cdots<U_m=T$ such that
\begin{equation*}
	\underset{t\in(U_i,U_{i+1})}{\mathrm{ess\,sup}}\Bigl(\int^{U_{i+1}}_t|f(s,t)|^2\,\diff s\Bigr)^{1/2}<\ep
\end{equation*}
for each $i\in\{0,1,\dots,m-1\}$.
\end{itemize}
It is easy to see that $(\sL^2(\triangle_2(t_0,T);\bR^{d_1\times d_2}),\|\cdot\|_{\sL^2(\triangle_2(t_0,T))})$ is a Banach space.
\end{itemize}

Throughout this paper, $d\in\bN$ represents the dimension of state processes, and $\ell\in\bN$ represents the dimension of control processes. We fix a finite terminal time $T\in(0,\infty)$.


\section{LQ stochastic Volterra control problems}\label{section_SVC}

We define the set of input conditions by $\cI:=\{(t_0,x)\,|\,t_0\in[0,T),\,x\in L^2(t_0,T;\bR^d)\}$ and control processes by $\cU(t_0,T):=L^2_\bF(t_0,T;\bR^\ell)$. For each input condition $(t_0,x)\in \cI$ and control $u\in\cU(t_0,T)$, consider the controlled linear SVIE \eqref{eq_state} and the quadratic cost functional \eqref{eq_cost}. The following is the standing assumption of this paper.


\begin{assum}\label{assum_coefficients}
\begin{itemize}
\item
The coefficients: $A\in L^2(\triangle_2(0,T);\bR^{d\times d})$, $B\in L^2(\triangle_2(0,T);\bR^{d\times\ell})$, $C\in\sL^2(\triangle_2(0,T);\bR^{d\times d})$, $D\in\sL^2(\triangle_2(0,T);\bR^{d\times\ell})$, $Q\in L^\infty(0,T;\bS^d)$, $R\in L^\infty(0,T;\bS^\ell)$, $S\in L^\infty(0,T;\bR^{\ell\times d})$.
\item
The inhomogeneous terms: $b\in L^{2,1}_\bF(\triangle_2(0,T);\bR^d)$, $\sigma\in L^2_\bF(\triangle_2(0,T);\bR^d)$, $q\in L^2_\bF(0,T;\bR^d)$, $\rho\in L^2_\bF(0,T;\bR^\ell)$.
\end{itemize}
\end{assum}


\begin{rem}
In the standing assumption, the coefficients and the inhomogeneous terms of the controlled SVIE \eqref{eq_state} are singular and of non-convolution-types. For example, $A(t,s)$ is allowed to diverge as $s\uparrow t$, and the same is true for $B,C,D,b$ and $\sigma$. Our framework is more general than \cite{ChYo07} (where the coefficients are of non-convolution-types, but $B,C$ and $D$ are essentially regular, and the inhomogeneous terms $b$ and $\sigma$ do not appear) and \cite{AbMiPh21,AbMiPh21+} (where the coefficients are singular, but they are of convolution-types with completely monotone kernels, and the inhomogeneous terms are deterministic). It is also worth to mention that the assumptions for the coefficients $C$ and $D$ being in $\sL^2(\triangle_2(t_0,T);\bR^{d_1\times d_2})$ fit into the framework of the so-called $\star$-Volterra kernels introduced in \cite{Ha21+++}. Furthermore, we do not impose any non-negativity conditions on the weighting matrices $Q$ and $R$ at this time.
\end{rem}

The LQ stochastic Volterra control problem is stated as follows.


\begin{SVC}
For each $(t_0,x)\in\cI$, find a control process $\hat{u}\in\cU(t_0,T)$ satisfying
\begin{equation}\label{eq_SVC}
	J(t_0,x;\hat{u})=\inf_{u\in\cU(t_0,T)}J(t_0,x;u)=:V(t_0,x).
\end{equation} 
\end{SVC}


\begin{defi}
For each $(t_0,x)\in\cI$, a control process $\hat{u}\in\cU(t_0,T)$ satisfying \eqref{eq_SVC} is called an \emph{open-loop optimal control} at $(t_0,x)$. We call the map $V$ the \emph{value functional} of Problem (SVC).
\end{defi}

In this paper, we are interested in the closed-loop framework of Problem (SVC). More precisely, we consider the following causal feedback strategies which were introduced in our previous paper \cite{HaWa1}.


\begin{defi}
Each triplet $(\Xi,\Gamma,v)\in\cS(0,T):=L^\infty(0,T;\bR^{\ell\times d})\times L^2(\triangle_2(0,T);\bR^{\ell\times d})\times\cU(0,T)$ is called a \emph{causal feedback strategy}. For each $(\Xi,\Gamma,v)\in\cS(0,T)$ and $(t_0,x)\in\cI$, we say that a triplet $(X^{t_0,x},\Theta^{t_0,x},u^{t_0,x})\in L^2_\bF(t_0,T;\bR^d)\times L^2_{\bF,\mathrm{c}}(\triangle_2(t_0,T);\bR^d)\times\cU(t_0,T)$ is a \emph{causal feedback solution} of controlled SVIE \eqref{eq_state} at $(t_0,x)$ corresponding to $(\Xi,\Gamma,v)$ if it satisfies the closed-loop system \eqref{eq_closed-loop_system}. We sometimes call $(X^{t_0,x},\Theta^{t_0,x})$ the causal feedback solution for simplicity. The control process $u^{t_0,x}\in\cU(t_0,T)$ is called the \emph{outcome} of the causal feedback strategy $(\Xi,\Gamma,v)$ at $(t_0,x)$, and we write $(\Xi,\Gamma,v)[t_0,x](t):=u^{t_0,x}(t)$.
\end{defi}


\begin{theo}\label{theo_SVIE}
For each causal feedback strategy $(\Xi,\Gamma,v)\in\cS(0,T)$ and each input condition $(t_0,x)\in\cI$, the controlled SVIE \eqref{eq_state} has a unique causal feedback solution $(X^{t_0,x},\Theta^{t_0,x},u^{t_0,x})\in L^2_\bF(t_0,T;\bR^d)\times L^2_{\bF,\mathrm{c}}(\triangle_2(t_0,T);\bR^d)\times\cU(t_0,T)$. Furthermore, there exists a constant $K>0$ depending only on $A,B,C,D,\Xi,\Gamma$ such that
\begin{equation}\label{eq_closed-loop_estimate}
\begin{split}
	 &\|X^{t_0,x}\|_{L^2_\bF(t_0,T)}+\|\Theta^{t_0,x}\|_{L^2_{\bF,\mathrm{c}}(\triangle_2(t_0,T))}+\|u^{t_0,x}\|_{L^2_\bF(t_0,T)}\\
	&\leq K(\|x\|_{L^2(t_0,T)}+\|b\|_{L^{2,1}_\bF(\triangle_2(t_0,T))}+\|\sigma\|_{L^2_\bF(\triangle_2(t_0,T))}+\|v\|_{L^2_\bF(t_0,T)}).
\end{split}
\end{equation}
\end{theo}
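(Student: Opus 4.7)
The plan is to reduce the closed-loop system \eqref{eq_closed-loop_system} to a single fixed-point equation for the outcome $u$ in $\cU(t_0,T)$, and then to apply the Banach contraction theorem sub-interval by sub-interval along a partition tailored to the $\sL^2$-singularity of the kernels $C$ and $D$. First, for any candidate $u\in\cU(t_0,T)$, the state SVIE~\eqref{eq_state} is a linear Volterra equation with $L^2$- and $\sL^2$-integrable deterministic coefficients, whose well-posedness in $L^2_\bF(t_0,T;\bR^d)$ is standard linear-SVIE theory (already developed in \cite{HaWa1}); denote its unique solution by $X^u$. The forward state $\Theta^u(s,t)$ is then defined directly by the integral expression for $\Theta$ in \eqref{eq_closed-loop_system}, and the $\sL^2$-assumption on $C,D$ together with It\^o's isometry will yield $\Theta^u\in L^2_{\bF,\mathrm{c}}(\triangle_2(t_0,T);\bR^d)$, with the required continuity of $t\mapsto\Theta^u(s,t)$ coming from the absolute continuity of the Lebesgue and It\^o integrals. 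Introduce the affine map
\[
\Psi(u)(t):=\Xi(t)X^u(t)+\int^T_t\Gamma(s,t)\Theta^u(s,t)\,\diff s+v(t),\quad t\in(t_0,T),
\]
so that causal feedback solutions are in bijection with fixed points of $\Psi$.

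Next I will verify that $\Psi$ maps $\cU(t_0,T)$ into itself. The only non-trivial ingredient is the nonlocal $\Gamma$-term, for which Cauchy--Schwarz in the $s$-integral and Fubini give
\[
\bE\int^T_{t_0}\Bigl|\int^T_t\Gamma(s,t)\Theta^u(s,t)\,\diff s\Bigr|^2\diff t\leq \|\Gamma\|^2_{L^2(\triangle_2(t_0,T))}\,\|\Theta^u\|^2_{L^2_{\bF,\mathrm{c}}(\triangle_2(t_0,T))};
\]
combined with the $L^2_\bF$-stability of $u\mapsto X^u$ and the $L^2_{\bF,\mathrm{c}}$-stability of $u\mapsto\Theta^u$, this makes $\Psi$ a bounded affine map with constants controlled by the data. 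To obtain a unique fixed point I exploit the very definition of $\sL^2$: for any $\ep>0$ there is a finite partition $t_0=U_0<U_1<\cdots<U_m=T$ on which the $C$- and $D$-parts of the SVIE map, restricted to each sub-interval $[U_i,U_{i+1}]$, have operator norm below $\ep$. Choosing $\ep$ small relative to $\|\Xi\|_{L^\infty}$, $\|B\|_{L^2(\triangle_2)}$ and $\|\Gamma\|_{L^2(\triangle_2)}$ renders the linear part of $\Psi$ a strict contraction on $[U_i,U_{i+1}]$, with the values of $(X,\Theta)$ on $[t_0,U_i]$ absorbed into the inhomogeneous forcing. Applying the Banach fixed-point theorem successively on $[U_0,U_1],\ldots,[U_{m-1},U_m]$ produces a unique $u^{t_0,x}\in\cU(t_0,T)$, together with the associated $X^{u^{t_0,x}}$ and $\Theta^{u^{t_0,x}}$; uniqueness on the whole interval follows from uniqueness on each piece and the Volterra (forward causal) structure.

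The estimate~\eqref{eq_closed-loop_estimate} is then obtained by tracking the affine contraction constants sub-interval by sub-interval and closing a discrete Gronwall recursion: the resulting constant $K$ depends only on $A,B,C,D,\Xi,\Gamma$ through the choice of partition $\{U_i\}$ and the operator norms of the linearised Volterra maps, and not on $x,b,\sigma,v$ or $t_0$. The main obstacle I expect is exactly that the kernels $C,D$ are permitted to blow up along the diagonal, so that a naive global Picard iteration on $[t_0,T]$ need not converge; the $\sL^2$-framework of the standing assumption is what circumvents this by furnishing a uniformly fine partition, and the $L^2(\triangle_2)$-integrability of $\Gamma$ is what keeps the nonlocal feedback compatible with this partition-based contraction.
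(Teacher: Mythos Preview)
The paper does not prove this theorem here; its proof consists solely of the reference ``See \cite[Theorem 2.4]{HaWa1}.'' Hence there is no in-paper argument to compare against, and your proposal supplies a genuine proof sketch where the paper gives only a citation to the companion article.

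Your strategy is sound and is the natural one for SVIEs with $\sL^2$-kernels: exploit the defining partition property of $\sL^2(\triangle_2(0,T))$ to localise the stochastic-integral part, run a contraction on each sub-interval, and glue using the forward causal structure. The Cauchy--Schwarz bound you wrote for the $\Gamma$-term is correct once one uses $\int_t^T|\Theta^u(s,t)|^2\,\diff s\leq\int_{t_0}^T\sup_{r\in[t_0,s]}|\Theta^u(s,r)|^2\,\diff s$, which is exactly what the $L^2_{\bF,\mathrm{c}}$-norm controls. Two minor points worth tightening: (a) when you invoke ``standard linear-SVIE theory'' for the map $u\mapsto X^u$, be aware that this step already requires the same $\sL^2$-partition argument you use later, so it may be cleaner to run a single contraction directly on $X$ (or on the pair $(X,\Theta)$) after substituting the feedback law $u=\Xi X+\int\Gamma\Theta+v$, rather than nesting two fixed-point procedures; (b) for the $L^2_{\bF,\mathrm{c}}$-membership of $\Theta^u$ you need a maximal inequality (Doob/BDG) on the It\^o part to pass from square-integrability to the $\sup_t$ bound, not merely ``absolute continuity.'' Neither point is a genuine gap, and the overall architecture is correct.
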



\begin{proof}
See \cite[Theorem 2.4]{HaWa1}.
\end{proof}


\begin{rem}
We emphasize that the causal feedback strategy $(\Xi,\Gamma,v)$ is chosen to be independent of the input condition $(t_0,x)$, while the causal feedback solution $(X^{t_0,x},\Theta^{t_0,x},u^{t_0,x})$ depends on $(t_0,x)$. It is worth to mention that the causal feedback solution satisfies the (generalized) flow property with respect to the input condition $(t_0,x)$ in a suitable sense. For more detailed discussions, see our previous paper \cite{HaWa1}.
\end{rem}

The purpose of this paper is to investigate the causal feedback optimal strategy defined as follows.


\begin{defi}
A causal feedback strategy $(\hat{\Xi},\hat{\Gamma},\hat{v})\in\cS(0,T)$ is called a \emph{causal feedback optimal strategy} of Problem (SVC) if
\begin{equation*}
	J(t_0,x;(\hat{\Xi},\hat{\Gamma},\hat{v})[t_0,x])\leq J(t_0,x;(\Xi,\Gamma,v)[t_0,x])
\end{equation*}
for any input condition $(t_0,x)\in\cI$ and any causal feedback strategy $(\Xi,\Gamma,v)\in\cS(0,T)$.
\end{defi}

The following lemma provides equivalent formulations of the causal feedback optimal strategy.


\begin{lemm}\label{lemm_causal feedback_equivalence}
For each $(\hat{\Xi},\hat{\Gamma},\hat{v})\in\cS(0,T)$, the following are equivalent:
\begin{itemize}
\item[(i)]
$(\hat{\Xi},\hat{\Gamma},\hat{v})$ is a causal feedback optimal strategy of Problem (SVC);
\item[(ii)]
it holds that
\begin{equation*}
	J(t_0,x;(\hat{\Xi},\hat{\Gamma},\hat{v})[t_0,x])\leq J(t_0,x;(\Xi,0,v)[t_0,x])
\end{equation*}
for any $(\Xi,v)\in L^\infty(0,T;\bR^{\ell\times d})\times\cU(0,T)$ and any input condition $(t_0,x)\in\cI$;
\item[(iii)]
it holds that
\begin{equation*}
	J(t_0,x;(\hat{\Xi},\hat{\Gamma},\hat{v})[t_0,x])\leq J(t_0,x;(0,\Gamma,v)[t_0,x])
\end{equation*}
for any $(\Gamma,v)\in L^2(\triangle_2(0,T);\bR^{\ell\times d})\times\cU(0,T)$ and any input condition $(t_0,x)\in\cI$;
\item[(iv)]
it holds that
\begin{equation*}
	J(t_0,x;(\hat{\Xi},\hat{\Gamma},\hat{v})[t_0,x])\leq J(t_0,x;(\hat{\Xi},\hat{\Gamma},v)[t_0,x])
\end{equation*}
for any $v\in\cU(0,T)$ and any input condition $(t_0,x)\in\cI$;
\item[(v)]
it holds that
\begin{equation*}
	J(t_0,x;(\hat{\Xi},\hat{\Gamma},\hat{v})[t_0,x])\leq J(t_0,x;u)
\end{equation*}
for any $u\in\cU(t_0,T)$ and any input condition $(t_0,x)\in\cI$.
\end{itemize}
\end{lemm}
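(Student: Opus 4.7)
The plan is to close a short cyclic chain of implications. The three implications (i)$\Rightarrow$(ii), (i)$\Rightarrow$(iii), (i)$\Rightarrow$(iv) are immediate, since each of (ii)--(iv) merely tests the optimality inequality against a strictly smaller subfamily of causal feedback strategies (those with $\Gamma=0$, $\Xi=0$, or $\Xi=\hat{\Xi}$ and $\Gamma=\hat{\Gamma}$, respectively). Likewise (v)$\Rightarrow$(i) is free: by \cref{theo_SVIE}, every outcome $(\Xi,\Gamma,v)[t_0,x]$ lies in $\cU(t_0,T)$, so applying the hypothesis in (v) with $u:=(\Xi,\Gamma,v)[t_0,x]$ already yields the inequality demanded by (i). The content therefore reduces to showing that each of (ii), (iii), (iv) implies (v).

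For (ii)$\Rightarrow$(v) and (iii)$\Rightarrow$(v), given an arbitrary $u\in\cU(t_0,T)$, let $\tilde u\in\cU(0,T)$ be the zero-extension of $u$ to $[0,T]$. Plugging the strategy $(0,0,\tilde u)\in\cS(0,T)$ into \eqref{eq_closed-loop_system} collapses the feedback relation to $u^{t_0,x}(t)=\tilde u(t)=u(t)$ on $(t_0,T)$. Since $(0,0,\tilde u)$ simultaneously belongs to the subfamilies $\{\Gamma=0\}$ and $\{\Xi=0\}$, each of the hypotheses (ii) and (iii) delivers the open-loop inequality in (v).

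The only non-routine implication is (iv)$\Rightarrow$(v). Given $u\in\cU(t_0,T)$, let $X\in L^2_\bF(t_0,T;\bR^d)$ and $\Theta\in L^2_{\bF,\mathrm{c}}(\triangle_2(t_0,T);\bR^d)$ denote the state and forward-state driven by $u$, whose well-posedness and $L^2$ estimates follow from \cref{theo_SVIE} applied to the strategy $(0,0,\tilde u)$ above. Then set
\begin{equation*}
    v(t):=u(t)-\hat{\Xi}(t)X(t)-\int^T_t\hat{\Gamma}(s,t)\Theta(s,t)\,\diff s,\quad t\in(t_0,T),
\end{equation*}
extended by $v\equiv 0$ on $[0,t_0]$. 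By direct inspection, $(X,\Theta,u)$ satisfies the closed-loop system \eqref{eq_closed-loop_system} associated with $(\hat{\Xi},\hat{\Gamma},v)$, and uniqueness in \cref{theo_SVIE} gives $(\hat{\Xi},\hat{\Gamma},v)[t_0,x]=u$; hypothesis (iv) then completes the argument.

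The main obstacle is to verify that this $v$ genuinely lies in $\cU(0,T)=L^2_\bF(0,T;\bR^\ell)$. The first two summands are harmless, since $u\in L^2_\bF$, $\hat{\Xi}\in L^\infty$, and $X\in L^2_\bF$; the decisive point is the $L^2_\bF$-boundedness of the integral term $\int^T_\cdot\hat{\Gamma}(s,\cdot)\Theta(s,\cdot)\,\diff s$. This is precisely the estimate underlying the well-posedness of the closed-loop system in \cref{theo_SVIE}: the bilinear map $(\hat{\Gamma},\Theta)\mapsto\int^T_\cdot\hat{\Gamma}(s,\cdot)\Theta(s,\cdot)\,\diff s$ extends boundedly from $L^2(\triangle_2(0,T);\bR^{\ell\times d})\times L^2_{\bF,\mathrm{c}}(\triangle_2(t_0,T);\bR^d)$ into $L^2_\bF(t_0,T;\bR^\ell)$ by the analysis of \cite{HaWa1}. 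With this ingredient in hand, the cycle closes and all five conditions are equivalent.
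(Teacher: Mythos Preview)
Your proof is correct and follows essentially the same cycle of implications as the paper: the paper likewise declares (i)$\Rightarrow$(ii),(iii),(iv) and (v)$\Rightarrow$(i), (ii)$\Rightarrow$(v), (iii)$\Rightarrow$(v) to be trivial, and isolates (iv)$\Rightarrow$(v) as the only substantive step, proving it via the very same choice of $v(t)=u(t)-\hat{\Xi}(t)X(t)-\int^T_t\hat{\Gamma}(s,t)\Theta(s,t)\,\diff s$. Your additional care in verifying $v\in\cU(0,T)$ through the boundedness of the $(\hat{\Gamma},\Theta)$ pairing is a detail the paper suppresses with a single ``Clearly''.
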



\begin{proof}
From the definition of the causal feedback optimality, (i) implies (ii),(iii),(iv) and (v). The implications (ii)\,$\Rightarrow$\,(v), (iii)\,$\Rightarrow$\,(v) and (v)\,$\Rightarrow$\,(i) are also trivial. We only need to show the implication (iv)\,$\Rightarrow$\,(v).

Assume that (iv) holds. Let $(t_0,x)\in\cI$ and $u\in\cU(t_0,T)$ be arbitrary, and denote by $(X,\Theta)$ the corresponding state pair. Define $v(t):=0$ for $t\in(0,t_0]$ and $v(t):=u(t)-\hat{\Xi}(t)X(t)-\int^T_t\hat{\Gamma}(s,t)\Theta(s,t)\,\diff s$ for $t\in(t_0,T)$. Clearly, $v$ is in $\cU(0,T)$ and satisfies $(\hat{\Xi},\hat{\Gamma},v)[t_0,x]=u$. By (iv), we have
\begin{equation*}
	J(t_0,x;(\hat{\Xi},\hat{\Gamma},\hat{v})[t_0,x])\leq J(t_0,x;(\hat{\Xi},\hat{\Gamma},v)[t_0,x])=J(t_0,x;u).
\end{equation*}
Thus, (v) holds. This completes the proof.
\end{proof}


\begin{rem}
From the above lemma, if $(\hat{\Xi},\hat{\Gamma},\hat{v})\in\cS(0,T)$ is a causal feedback optimal strategy of Problem (SVC), then for any input condition $(t_0,x)\in\cI$, the outcome $(\hat{\Xi},\hat{\Gamma},\hat{v})[t_0,x]\in\cU(t_0,T)$ is an open-loop optimal control of Problem (SVC) at $(t_0,x)$. Therefore, each causal feedback optimal strategy can be seen as a causal feedback representation of an open-loop optimal control. Note that, even if a \emph{state-feedback strategy} $(\hat{\Xi},0,\hat{v})\in\cS(0,T)$ (in which the feedback $\hat{\Gamma}$ of the forward state process is absent) is optimal among all state-feedback strategies in the sense that
\begin{equation*}
	J(t_0,x;(\hat{\Xi},0,\hat{v})[t_0,x])\leq J(t_0,x;(\Xi,0,v)[t_0,x])
\end{equation*}
for any $(\Xi,0,v)\in\cS(0,T)$ and any $(t_0,x)\in\cI$, it must be optimal among all causal feedback strategies in the sense that
\begin{equation*}
	J(t_0,x;(\hat{\Xi},0,\hat{v})[t_0,x])\leq J(t_0,x;(\Xi,\Gamma,v)[t_0,x])
\end{equation*}
for any $(\Xi,\Gamma,v)\in\cS(0,T)$ and any $(t_0,x)\in\cI$.
\end{rem}

We will also consider the homogeneous version of Problem (SVC), where the inhomogeneous terms $b,\sigma,q$ and $\rho$ are absent. In this case, the controlled SVIE \eqref{eq_state} and the cost functional \eqref{eq_cost} become
\begin{equation}\label{eq_state_0}
	X_0(t)=x(t)+\int^t_{t_0}\{A(t,s)X_0(s)+B(t,s)u(s)\}\,\diff s+\int^t_{t_0}\{C(t,s)X_0(s)+D(t,s)u(s)\}\,\diff W(s),\ t\in(t_0,T),
\end{equation}
and
\begin{equation}\label{eq_cost_0}
	J^0(t_0,x;u):=\bE\left[\int^T_{t_0}\left\langle\begin{pmatrix}Q(t)&S(t)^\top\\S(t)&R(t)\end{pmatrix}\begin{pmatrix}X_0(t)\\u(t)\end{pmatrix},\begin{pmatrix}X_0(t)\\u(t)\end{pmatrix}\right\rangle\,\diff t\right],
\end{equation}
respectively. We write the homogeneous problem by Problem (SVC)$^0$ and the corresponding value functional by $V^0(t_0,x)$. For each causal feedback strategy $(\Xi,\Gamma,v)\in\cS(0,T)$ and input condition $(t_0,x)\in\cI$, the corresponding causal feedback solution $(X^{t_0,x}_0,\Theta^{t_0,x}_0,u^{t_0,x}_0)\in L^2_\bF(t_0,T;\bR^d)\times L^2_{\bF,\mathrm{c}}(\triangle_2(t_0,T);\bR^d)\times\cU(t_0,T)$ of the homogeneous controlled SVIE \eqref{eq_state_0} satisfies
\begin{equation*}
	\begin{dcases}
	X^{t_0,x}_0(t)=x(t)+\int^t_{t_0}\{A(t,s)X^{t_0,x}_0(s)+B(t,s)u^{t_0,x}_0(s)\}\,\diff s\\
	\hspace{3cm}+\int^t_{t_0}\{C(t,s)X^{t_0,x}_0(s)+D(t,s)u^{t_0,x}_0(s)\}\,\diff W(s),\ t\in(t_0,T),\\
	\Theta^{t_0,x}_0(s,t)=x(s)+\int^t_{t_0}\{A(s,r)X^{t_0,x}_0(r)+B(s,r)u^{t_0,x}_0(r)\}\,\diff r\\
	\hspace{3cm}+\int^t_{t_0}\{C(s,r)X^{t_0,x}_0(r)+D(s,r)u^{t_0,x}_0(r)\}\,\diff W(r),\ (s,t)\in\triangle_2(t_0,T),\\\displaystyle
	u^{t_0,x}_0(t)=\Xi(t)X^{t_0,x}_0(t)+\int^T_t\Gamma(s,t)\Theta^{t_0,x}_0(s,t)\,\diff s+v(t),\ t\in(t_0,T).
	\end{dcases}
\end{equation*}
We denote the outcome by $(\Xi,\Gamma,v)^0[t_0,x]:=u^{t_0,x}_0\in\cU(t_0,T)$.


\section{Preliminaries}\label{section_preliminaries}

In this section, we summarize the results of our previous work \cite{HaWa1}. Specifically, we introduce \emph{Type-II extended backward stochastic Volterra integral equations} (Type-II EBSVIEs) and \emph{Lyapunov--Volterra equations} which play fundamental roles in the study of Problem (SVC). For more detailed discussions and proofs, see \cite{HaWa1}.


\subsection{Type-II EBSVIEs and duality principle}

Let $(\Xi,\Gamma)\in L^\infty(0,T;\bR^{\ell\times d})\times L^2(\triangle_2(0,T);\bR^{\ell\times d})$. For each $\chi\in L^{2,1}_\bF(\triangle_2(0,T);\bR^d)$ and $\psi\in L^2_\bF(0,T;\bR^d)$, we consider the following \emph{Type-II EBSVIE}:
\begin{equation}\label{eq_non-local_BSDE}
	\begin{dcases}
	\,\diff\eta(t,s)=-\Bigl\{\chi(t,s)+\Gamma(t,s)^\top\int^T_sB(r,s)^\top\eta(r,s)\,\diff r+\Gamma(t,s)^\top\int^T_sD(r,s)^\top\zeta(r,s)\,\diff r\Bigr\}\,\diff s\\
	\hspace{3cm}+\zeta(t,s)\,\diff W(s),\ (t,s)\in\triangle_2(0,T),\\
	\eta(t,t)=\psi(t)+\int^T_t(A+B\triangleright\Xi)(r,t)^\top\eta(r,t)\,\diff r+\int^T_t(C+D\triangleright\Xi)(r,t)^\top\zeta(r,t)\,\diff r,\ t\in(0,T),
	\end{dcases}
\end{equation}
with $A+B\triangleright\Xi\in L^2(\triangle_2(0,T);\bR^{d\times d})$ and $C+D\triangleright\Xi\in\sL^2(\triangle_2(0,T);\bR^{d\times d})$ defined by
\begin{equation*}
	(A+B\triangleright\Xi)(t,s):=A(t,s)+B(t,s)\Xi(s),\ (C+D\triangleright\Xi)(t,s):=C(t,s)+D(t,s)\Xi(s)
\end{equation*}
for $(t,s)\in\triangle_2(0,T)$.


\begin{defi}
We say that a pair $(\eta,\zeta)\in L^2_{\bF,\mathrm{c}}(\triangle_2(0,T);\bR^d)\times L^2_\bF(\triangle_2(0,T);\bR^d)$ is an \emph{adapted solution} to the Type-II EBSVIE \eqref{eq_non-local_BSDE} if it satisfies
\begin{equation*}
	\begin{dcases}
	\eta(t,\theta)=\eta(t,t)+\int^t_\theta\Bigl\{\chi(t,s)+\Gamma(t,s)^\top\int^T_s B(r,s)^\top\eta(r,s)\,\diff r+\Gamma(t,s)^\top\int^T_s D(r,s)^\top\zeta(r,s)\,\diff r\Bigr\}\,\diff s\\
	\hspace{4cm}-\int^t_\theta\zeta(t,s)\,\diff W(s),\\
	\eta(t,t)=\psi(t)+\int^T_t(A+B\triangleright\Xi)(r,t)^\top\eta(r,t)\,\diff r+\int^T_t(C+D\triangleright\Xi)(r,t)^\top\zeta(r,t)\,\diff r,
	\end{dcases}
\end{equation*}
for a.e.\ $t\in(0,T)$ and any $\theta\in[0,t]$, a.s.
\end{defi}


\begin{theo}\label{theo_Type-II_EBSVIE}
Let $(\Xi,\Gamma)\in L^\infty(0,T;\bR^{\ell\times d})\times L^2(\triangle_2(0,T);\bR^{\ell\times d})$ be fixed. For any $\chi\in L^{2,1}_\bF(\triangle_2(0,T);\bR^d)$ and $\psi\in L^2_\bF(0,T;\bR^d)$, there exists a unique adapted solution $(\eta,\zeta)\in L^2_{\bF,\mathrm{c}}(\triangle_2(0,T);\bR^d)\times L^2_\bF(\triangle_2(0,T);\bR^d)$ to the Type-II EBSVIE \eqref{eq_non-local_BSDE}. Furthermore, for any $v\in\cU(0,T)$ and $(t_0,x)\in\cI$, the following \emph{duality principle} holds:
\begin{equation}\label{eq_duality}
\begin{split}
	&\bE\Bigl[\int^T_{t_0}\Bigl\{\langle\psi(t),X^{t_0,x}(t)\rangle+\int^T_t\langle\chi(s,t),\Theta^{t_0,x}(s,t)\rangle\,\diff s\Bigr\}\,\diff t\Bigr]\\
	&=\int^T_{t_0}\langle\bE[\eta(t,t_0)],x(t)\rangle\,\diff t+\bE\Bigl[\int^T_{t_0}\Bigl\{\int^T_t\langle\eta(s,t),b(s,t)\rangle\,\diff s+\int^T_t\langle\zeta(s,t),\sigma(s,t)\rangle\,\diff s\Bigr\}\,\diff t\Bigr]\\
	&\hspace{0.5cm}+\bE\Bigl[\int^T_{t_0}\Bigl\langle\int^T_t\{B(s,t)^\top\eta(s,t)+D(s,t)^\top\zeta(s,t)\}\,\diff s,v(t)\Bigr\rangle\,\diff t\Bigr],
\end{split}
\end{equation}
where $(X^{t_0,x},\Theta^{t_0,x})\in L^2_\bF(t_0,T;\bR^d)\times L^2_{\bF,\mathrm{c}}(\triangle_2(t_0,T);\bR^d)$ is the causal feedback solution to the SVIE \eqref{eq_state} at $(t_0,x)$ corresponding to the causal feedback strategy $(\Xi,\Gamma,v)\in\cS(0,T)$.
\end{theo}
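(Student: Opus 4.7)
The Type-II EBSVIE~\eqref{eq_non-local_BSDE} couples its unknowns $(\eta,\zeta)$ in two non-local ways that go beyond the standard theory of BSDEs/BSVIEs. First, the generator of the $s$-BSDE (for each fixed parameter $t$) depends on the values $\eta(r,s),\zeta(r,s)$ at \emph{all} $r\in(s,T)$ through the term $\Gamma(t,s)^\top\int_s^T[B(r,s)^\top\eta(r,s)+D(r,s)^\top\zeta(r,s)]\,\diff r$. Second, the diagonal ``terminal'' value $\eta(t,t)$ is itself defined implicitly via an integral of the unknowns over $r\in(t,T)$. My plan for existence and uniqueness is therefore a Banach fixed-point argument on the product space $\bH:=L^2_{\bF,\mathrm{c}}(\triangle_2(0,T);\bR^d)\times L^2_\bF(\triangle_2(0,T);\bR^d)$: given a guess $(\eta',\zeta')\in\bH$, I freeze it inside the non-local terms to obtain modified data
\[
\tilde\chi(t,s):=\chi(t,s)+\Gamma(t,s)^\top\!\!\int_s^T[B(r,s)^\top\eta'(r,s)+D(r,s)^\top\zeta'(r,s)]\,\diff r,
\]
\[
\tilde\psi(t):=\psi(t)+\int_t^T\bigl[(A+B\triangleright\Xi)(r,t)^\top\eta'(r,t)+(C+D\triangleright\Xi)(r,t)^\top\zeta'(r,t)\bigr]\,\diff r,
\]
which lie in $L^{2,1}_\bF$ and $L^2_\bF$ respectively thanks to the $\sL^2$-regularity of $C,D$ and the $L^2$-regularity of $A,B$ in Assumption~\ref{assum_coefficients}. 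For each fixed parameter $t\in(0,T)$, the now \emph{ordinary} linear BSDE $\eta(t,\theta)=\tilde\psi(t)+\int_\theta^t\tilde\chi(t,r)\,\diff r-\int_\theta^t\zeta(t,r)\,\diff W(r)$ on $[0,t]$ admits a unique adapted solution with estimates uniform in $t$, producing a well-defined map $\Phi:\bH\to\bH$; a standard weighted-norm estimate (equivalently, a short terminal-interval induction that exploits the partition property built into the definition of $\sL^2$) then delivers contractivity and hence a unique fixed point.

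For the duality identity~\eqref{eq_duality}, my plan is to apply It\^o's formula to $s\mapsto\langle\eta(t,s),\Theta^{t_0,x}(t,s)\rangle$ on $[t_0,t]$, treating $t\in(t_0,T)$ as a parameter. The boundary values yield $\langle\eta(t,t),X^{t_0,x}(t)\rangle-\langle\eta(t,t_0),x(t)\rangle$ because $\Theta^{t_0,x}(t,t)=X^{t_0,x}(t)$ and $\Theta^{t_0,x}(t,t_0)=x(t)$ is deterministic. Taking expectations (the martingale parts vanish), integrating over $t\in(t_0,T)$, replacing $\eta(t,t)$ by its diagonal expression from the second line of~\eqref{eq_non-local_BSDE}, and substituting $u^{t_0,x}(t)=\Xi(t)X^{t_0,x}(t)+\int_t^T\Gamma(s,t)\Theta^{t_0,x}(s,t)\,\diff s+v(t)$ from the closed-loop system~\eqref{eq_closed-loop_system}, a Fubini rearrangement pairs the $\Xi$-part of $u$ with the cross terms $\langle\eta,AX\rangle+\langle\zeta,CX\rangle$ to reconstruct the $(A+B\triangleright\Xi)$ and $(C+D\triangleright\Xi)$ blocks appearing in $\eta(t,t)$, while the $\Gamma$-part of $u$ cancels the non-local $\Gamma(t,s)^\top\int_s^T[\cdots]$ generator contribution. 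What remains, once these pairwise cancellations are accounted for, is exactly the right-hand side of~\eqref{eq_duality}, with $v$ coupled to $\int_t^T[B^\top\eta+D^\top\zeta]\,\diff s$ as required.

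The hardest step, I expect, is the contraction in the fixed-point argument. The singularity of $A,B,C,D$ at the diagonal forces the BSDE estimates for the auxiliary problem to carry constants that are only $L^2$ in $t$, so a global contraction in the plain $L^2$-norm is not automatic. The partition condition in the definition of $\sL^2$---that for any $\ep>0$, $(0,T)$ admits a finite partition on each piece of which the ``vertical'' $L^2$-norm of $C$ and $D$ is below $\ep$---is exactly what lets the contraction run on sufficiently short sub-intervals before being pasted back together, so this technical space is doing real work. The duality step, by contrast, is essentially an It\^o/Fubini bookkeeping computation whose cancellations are forced by the diagonal condition on $\eta$ having been designed to mirror the closed-loop feedback structure of $u^{t_0,x}$.
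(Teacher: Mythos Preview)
This theorem is not proved in the present paper: it appears in Section~\ref{section_preliminaries} (``Preliminaries''), which explicitly summarizes results from the companion paper \cite{HaWa1}, and no proof is given here. Consequently there is no proof in this paper to compare your proposal against.

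That said, your outline is the natural strategy and is almost certainly what \cite{HaWa1} does. The fixed-point scheme you describe---freezing the non-local integrals $\int_s^T[B^\top\eta'+D^\top\zeta']\,\diff r$ and the diagonal integral defining $\eta(t,t)$, solving the resulting decoupled family of parametrized linear BSDEs, and then closing the loop via a contraction on short sub-intervals chosen according to the partition property in the definition of $\sL^2$---is exactly how one handles such doubly non-local backward equations. Your identification of the $\sL^2$-partition condition as the mechanism that makes the contraction go through is correct and is indeed the point of that space. For the duality, the It\^o--Fubini computation you sketch (pairing $\eta(t,\cdot)$ with $\Theta^{t_0,x}(t,\cdot)$, using the boundary values $\Theta^{t_0,x}(t,t)=X^{t_0,x}(t)$ and $\Theta^{t_0,x}(t,t_0)=x(t)$, and then letting the $\Xi$- and $\Gamma$-parts of the feedback control cancel against the $(A+B\triangleright\Xi)$, $(C+D\triangleright\Xi)$ and $\Gamma^\top\!\int[\cdots]$ blocks in the EBSVIE) is the standard route and your accounting of the cancellations is accurate.
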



\subsection{Lyapunov--Volterra equations and quadratic functionals}


\begin{defi}
We denote by $\Pi(0,T)$ the set of pairs $P=(P^{(1)},P^{(2)})$ with $P^{(1)}:(0,T)\to\bR^{d\times d}$ and $P^{(2)}:\square_3(t_0,T)\to\bR^{d\times d}$ such that
\begin{itemize}
\item
$P^{(1)}\in L^\infty(0,T;\bS^d)$;
\item
for a.e.\ $(s_1,s_2)\in(0,T)^2$, $t\mapsto P^{(2)}(s_1,s_2,t)$ is absolutely continuous on $(0,s_1\wedge s_2)$;
\item
the function $P^{(2)}(s_1,s_2,s_1\wedge s_2):=\lim_{t\uparrow s_1\wedge s_2}P(s_1,s_2,t)$, $(s_1,s_2)\in(0,T)^2$, is in $L^2((0,T)^2;\bR^{d\times d})$;
\item
the function $\dot{P}^{(2)}(s_1,s_2,t):=\frac{\partial P^{(2)}}{\partial t}(s_1,s_2,t)$, $(s_1,s_2,t)\in\square_3(0,T)$, is in $L^{2,2,1}(\square_3(0,T);\bR^{d\times d})$;
\item
for a.e.\ $(s_1,s_2,t)\in\square_3(0,T)$, it holds that $P^{(2)}(s_1,s_2,t)=P^{(2)}(s_2,s_1,t)^\top$.
\end{itemize}
\end{defi}


\begin{rem}\label{rem_Pi_Banach}
Observe that $\Pi(0,T)$ is a Banach space with the norm
\begin{align*}
	&\|P\|_{\Pi(0,T)}:=\underset{t\in(0,T)}{\mathrm{ess\,sup}}|P^{(1)}(t)|+\Bigl(\int^T_0\!\!\int^T_0|P^{(2)}(s_1,s_2,s_1\wedge s_2)|^2\,\diff s_1\!\,\diff s_2\Bigr)^{1/2}\\
	&\hspace{4cm}+\Bigl(\int^T_0\!\!\int^T_0\Bigl(\int^{s_1\wedge s_2}_0|\dot{P}^{(2)}(s_1,s_2,t)|\,\diff t\Bigr)^2\,\diff s_1\!\,\diff s_2\Bigr)^{1/2}
\end{align*}
for $P=(P^{(1)},P^{(2)})\in\Pi(0,T)$. Furthermore, the following holds:
\begin{equation*}
	\underset{t\in(0,T)}{\mathrm{ess\,sup}}|P^{(1)}(t)|+\Bigl(\int^T_0\!\!\int^T_0\sup_{t\in[0,s_1\wedge s_2]}|P^{(2)}(s_1,s_2,t)|^2\,\diff s_1\!\,\diff s_2\Bigr)^{1/2}\leq\|P\|_{\Pi(0,T)}<\infty.
\end{equation*}
\end{rem}


\begin{lemm}\label{lemm_self-adjoint}
Let $P=(P^{(1)},P^{(2)})\in\Pi(0,T)$. Then for each $t_0\in[0,T)$, the map $\cP^{t_0}:L^2(t_0,T;\bR^d)\to L^2(t_0,T;\bR^d)$ defined by
\begin{equation*}
	(\cP^{t_0}x)(t):=P^{(1)}(t)x(t)+\int^T_{t_0}P^{(2)}(t,r,t_0)x(r)\,\diff r,\ t\in(t_0,T),
\end{equation*}
for $x\in L^2(t_0,T;\bR^d)$, is a self-adjoint bounded linear operator on the Hilbert space $L^2(t_0,T;\bR^d)$.
\end{lemm}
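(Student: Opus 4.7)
The plan is to verify the three standard properties in turn: (a) $\cP^{t_0}$ maps $L^2(t_0,T;\bR^d)$ into itself, (b) it is bounded, and (c) it is self-adjoint. Linearity is immediate from the definition, so I will not comment on it separately.

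For well-definedness and boundedness, I would decompose $\cP^{t_0} = \cP^{t_0}_1 + \cP^{t_0}_2$, where $\cP^{t_0}_1$ is pointwise multiplication by $P^{(1)}$ and $\cP^{t_0}_2$ is the integral operator with kernel $K_{t_0}(t,r) := P^{(2)}(t,r,t_0)$. Since $P^{(1)} \in L^\infty(0,T;\bS^d)$, the operator $\cP^{t_0}_1$ is clearly bounded with norm at most $\|P^{(1)}\|_{L^\infty(0,T)}$. For $\cP^{t_0}_2$, the key step is to show $K_{t_0} \in L^2((t_0,T)^2;\bR^{d\times d})$, which yields Hilbert–Schmidt boundedness by the Cauchy–Schwarz inequality. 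This is exactly what the $\Pi(0,T)$ structure provides: for a.e.\ $(s_1,s_2)$, absolute continuity in $t$ gives
\begin{equation*}
|P^{(2)}(s_1,s_2,t_0)| \;\le\; |P^{(2)}(s_1,s_2,s_1\wedge s_2)| + \int_0^{s_1\wedge s_2} |\dot{P}^{(2)}(s_1,s_2,t)|\,\diff t,
\end{equation*}
and integrating the square in $(s_1,s_2)$ and using Minkowski shows that $\|K_{t_0}\|_{L^2((t_0,T)^2)} \le \|P\|_{\Pi(0,T)}$, exactly as recorded in \cref{rem_Pi_Banach}. Thus $\cP^{t_0}$ is a bounded operator on $L^2(t_0,T;\bR^d)$ with norm controlled by $\|P\|_{\Pi(0,T)}$.

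For self-adjointness, I would check $\langle \cP^{t_0}x,y\rangle_{L^2(t_0,T)} = \langle x,\cP^{t_0}y\rangle_{L^2(t_0,T)}$ term by term. The pointwise multiplication part is symmetric because $P^{(1)}(t) \in \bS^d$ for a.e.\ $t$, giving $\langle P^{(1)}(t)x(t),y(t)\rangle = \langle x(t),P^{(1)}(t)y(t)\rangle$. For the integral part, Fubini (justified by the $L^2$ bound on $K_{t_0}$ just established) gives
\begin{equation*}
\int^T_{t_0}\!\!\int^T_{t_0} \langle P^{(2)}(t,r,t_0)x(r),y(t)\rangle \,\diff r\,\diff t = \int^T_{t_0}\!\!\int^T_{t_0} \langle x(r), P^{(2)}(t,r,t_0)^\top y(t)\rangle \,\diff t\,\diff r,
\end{equation*}
and then the symmetry relation $P^{(2)}(s_1,s_2,t)=P^{(2)}(s_2,s_1,t)^\top$, valid for a.e.\ $(s_1,s_2,t)\in\square_3(0,T)$, yields $P^{(2)}(t,r,t_0)^\top = P^{(2)}(r,t,t_0)$. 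Relabeling $r\leftrightarrow t$ converts the right-hand side to $\int^T_{t_0} \langle x(t), \int^T_{t_0} P^{(2)}(t,r,t_0)y(r)\,\diff r\rangle\,\diff t$, completing the identification with $\langle x,\cP^{t_0}y\rangle$.

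The only genuine subtlety is justifying $K_{t_0}\in L^2((t_0,T)^2;\bR^{d\times d})$ from the abstract $\Pi(0,T)$ definition; this is the step where one actually uses both the terminal-trace integrability of $P^{(2)}(s_1,s_2,s_1\wedge s_2)$ and the $L^{2,2,1}$-integrability of $\dot P^{(2)}$. Everything after that—boundedness, Fubini, and the symmetry manipulation—is routine.
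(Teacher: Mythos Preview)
Your argument is correct. Note that the paper itself does not supply a proof of this lemma; it is recalled without proof in \cref{section_preliminaries} from the companion paper \cite{HaWa1}. Your approach---decomposing into a multiplication operator plus a Hilbert--Schmidt integral operator, bounding the kernel via the absolute-continuity estimate recorded in \cref{rem_Pi_Banach}, and verifying self-adjointness by Fubini and the symmetry condition in the definition of $\Pi(0,T)$---is the natural and standard route, and is presumably what the proof in \cite{HaWa1} does as well.

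One small technical remark: the symmetry relation $P^{(2)}(s_1,s_2,t)=P^{(2)}(s_2,s_1,t)^\top$ is stated in the definition of $\Pi(0,T)$ only for a.e.\ $(s_1,s_2,t)\in\square_3(0,T)$, whereas you invoke it at the fixed slice $t=t_0$. This is justified because, for a.e.\ $(s_1,s_2)$, both $t\mapsto P^{(2)}(s_1,s_2,t)$ and $t\mapsto P^{(2)}(s_2,s_1,t)^\top$ are absolutely continuous on $(0,s_1\wedge s_2)$ and agree for a.e.\ $t$, hence agree everywhere on $[0,s_1\wedge s_2]$ and in particular at $t_0$. You may wish to make this explicit.
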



\begin{lemm}\label{lemm_Pi_0}
Let $P=(P^{(1)},P^{(2)})\in\Pi(0,T)$. Assume that
\begin{equation*}
	\int^T_{t_0}\langle P^{(1)}(t)x(t),x(t)\rangle\,\diff t+\int^T_{t_0}\!\!\int^T_{t_0}\langle P^{(2)}(s_1,s_2,t_0)x(s_2),x(s_1)\rangle\,\diff s_1\!\,\diff s_2=0
\end{equation*}
for any $(t_0,x)\in\cI$. Then $P=0$ in the sense that $P^{(1)}(t)=0$ for a.e.\ $t\in(0,T)$ and $P^{(2)}(s_1,s_2,t)=0$ for any $t\in[0,s_1\wedge s_2]$ for a.e.\ $(s_1,s_2)\in(0,T)^2$.
\end{lemm}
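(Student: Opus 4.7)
The plan is to establish $P^{(1)}=0$ and $P^{(2)}=0$ in sequence, by using Lebesgue differentiation together with a polarization argument that exploits the symmetry of $P^{(2)}$, and finally to upgrade an a.e.-in-$t_0$ conclusion for $P^{(2)}$ to the pointwise-in-$t$ statement claimed in the lemma by using the absolute continuity of $t\mapsto P^{(2)}(s_1,s_2,t)$ built into the space $\Pi(0,T)$.

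\emph{Step 1 (vanishing of $P^{(1)}$).} First I would test the hypothesis with $t_0=0$ and $x(s)=y\1_{[a,a+h]}(s)$ for $y\in\bR^d$ and small $h>0$, which gives
\begin{equation*}
\int_a^{a+h}\langle P^{(1)}(t)y,y\rangle\,dt+\int_a^{a+h}\!\!\int_a^{a+h}\langle P^{(2)}(s_1,s_2,0)y,y\rangle\,ds_1\,ds_2=0.
\end{equation*}
Dividing by $h$ and sending $h\downarrow 0$, the first term converges to $\langle P^{(1)}(a)y,y\rangle$ at every Lebesgue point $a$ of $P^{(1)}$, while Cauchy--Schwarz bounds the second term by $|y|^2\,\|P^{(2)}(\cdot,\cdot,0)\|_{L^2([a,a+h]^2)}\to 0$ (using $P^{(2)}(\cdot,\cdot,0)\in L^2((0,T)^2)$, which follows from \cref{rem_Pi_Banach}). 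This forces $\langle P^{(1)}(a)y,y\rangle=0$ for every $y$, hence $P^{(1)}=0$ a.e.\ by symmetry.

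\emph{Step 2 ($P^{(2)}(\cdot,\cdot,t_0)=0$ for each fixed $t_0$).} Substituting $P^{(1)}=0$ into the hypothesis leaves $\int_{t_0}^T\!\int_{t_0}^T\langle P^{(2)}(s_1,s_2,t_0)x(s_2),x(s_1)\rangle\,ds_1\,ds_2=0$ for every $x\in L^2(t_0,T;\bR^d)$. The underlying bilinear form is symmetric by virtue of $P^{(2)}(s_1,s_2,t)=P^{(2)}(s_2,s_1,t)^\top$, so polarization upgrades this to the bilinear identity
\begin{equation*}
\int_{t_0}^T\!\!\int_{t_0}^T\langle P^{(2)}(s_1,s_2,t_0)x(s_2),y(s_1)\rangle\,ds_1\,ds_2=0\quad\text{for all }x,y\in L^2(t_0,T;\bR^d).
\end{equation*}
Testing with $x=y_2\1_{[a_2,a_2+h]}$ and $y=y_1\1_{[a_1,a_1+h]}$ for small disjoint intervals, dividing by $h^2$, and invoking the two-dimensional Lebesgue differentiation theorem yields $\langle P^{(2)}(a_1,a_2,t_0)y_2,y_1\rangle=0$ for a.e.\ $(a_1,a_2)$, so $P^{(2)}(\cdot,\cdot,t_0)=0$ a.e.\ on $(t_0,T)^2$.

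\emph{Step 3 (passing to pointwise-in-$t$).} The main obstacle is bridging the a.e.-in-$t_0$ conclusion of Step 2---one null set per $t_0$, an uncountable family---to the claimed pointwise-in-$t$ statement. The idea is to restrict $t_0$ to the countable dense set $\bQ\cap[0,T)$: letting $N_{t_0}$ denote the null set from Step 2 and $N'$ denote the null set of $(s_1,s_2)$ where $t\mapsto P^{(2)}(s_1,s_2,t)$ fails to be absolutely continuous, the union $N:=N'\cup\bigcup_{t_0\in\bQ\cap[0,T)}N_{t_0}$ is still null. For any $(s_1,s_2)\notin N$, the function $t\mapsto P^{(2)}(s_1,s_2,t)$ is continuous on $(0,s_1\wedge s_2)$ and vanishes on the dense subset $\bQ\cap(0,s_1\wedge s_2)$, hence vanishes identically on $(0,s_1\wedge s_2)$; the endpoint limits at $s_1\wedge s_2$ (by the definition of $\Pi(0,T)$) and at $0$ (which exists because $\dot{P}^{(2)}(s_1,s_2,\cdot)\in L^1(0,s_1\wedge s_2)$ by Tonelli applied to $\dot{P}^{(2)}\in L^{2,2,1}$) are also zero, completing the proof.
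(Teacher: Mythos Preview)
The paper does not actually prove this lemma here; it is stated in \cref{section_preliminaries} as one of the preliminary results imported from the companion paper \cite{HaWa1}, so there is no ``paper's own proof'' to compare against.

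Your argument is correct. Step~1 cleanly isolates $P^{(1)}$ by a one-parameter Lebesgue differentiation, with the double integral killed by the $L^2$ bound on $P^{(2)}(\cdot,\cdot,0)$ coming from \cref{rem_Pi_Banach}. Step~2 is standard: once the quadratic form of the self-adjoint Hilbert--Schmidt operator with kernel $P^{(2)}(\cdot,\cdot,t_0)$ vanishes, polarization gives that the operator itself is zero, hence its kernel vanishes a.e.; your disjointness condition on the test intervals is unnecessary after polarization, but harmless. Step~3 is the only place where some care is needed, and you handle it correctly: restricting $t_0$ to a countable dense set and using the absolute continuity of $t\mapsto P^{(2)}(s_1,s_2,t)$ (built into the definition of $\Pi(0,T)$) lets you pass from ``zero at a dense set of $t$'' to ``identically zero on $[0,s_1\wedge s_2]$'' off a single null set in $(s_1,s_2)$. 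The endpoint $t=0$ is indeed covered since $\dot{P}^{(2)}(s_1,s_2,\cdot)\in L^1(0,s_1\wedge s_2)$ for a.e.\ $(s_1,s_2)$ by Tonelli, so the absolutely continuous representative extends continuously to $t=0$.
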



\begin{defi}\label{defi_lint_rint}
Let $P=(P^{(1)},P^{(2)})\in\Pi(0,T)$. For each $M_1:\triangle_2(0,T)\to\bR^{d_1\times d}$ and $M_2:\triangle_2(0,T)\to\bR^{d\times d_2}$ with $d_1,d_2\in\bN$, we define
\begin{align*}
	&(M_1\lint P)(s,t):=M_1(s,t)P^{(1)}(s)+\int^T_tM_1(r,t)P^{(2)}(r,s,t)\,\diff r,\ (s,t)\in\triangle_2(0,T),\\
	&(P\rint M_2)(s,t):=P^{(1)}(s)M_2(s,t)+\int^T_tP^{(2)}(s,r,t)M_2(r,t)\,\diff r,\ (s,t)\in\triangle_2(0,T),
\end{align*}
and
\begin{equation*}
	(M_1\lint P\rint M_2)(t):=\!\int^T_tM_1(s,t)P^{(1)}(s)M_2(s,t)\,\diff s+\!\int^T_t\!\!\int^T_t\!M_1(s_1,t)P^{(2)}(s_1,s_2,t)M_2(s_2,t)\,\diff s_1\!\,\diff s_2,\ t\in(0,T).
\end{equation*}
\end{defi}


\begin{lemm}\label{lemm_lint-rint_integrability}
Let $P=(P^{(1)},P^{(2)})\in\Pi(0,T)$. Fix $d_1,d_2\in\bN$.
\begin{itemize}
\item[(i)]
For each $M\in \cL(\triangle_2(0,T);\bR^{d\times d_2})$ with $\cL$ being one of $L^{2,1}_\bF$, $L^2_\bF$, $L^{2,1}$, $L^2$ and $\sL^2$, it holds that $P\rint M\in \cL(\triangle_2(0,T);\bR^{d\times d_2})$ and
\begin{equation*}
	\|P\rint M\|_{\cL(\triangle_2(0,T))}\leq\|P\|_{\Pi(0,T)}\|M\|_{\cL(\triangle_2(0,T))}.
\end{equation*}
Furthermore, $(P\rint M)^\top=M^\top\lint P$.
\item[(ii)]
For each $M_1\in \sL^2(\triangle_2(0,T);\bR^{d_1\times d})$ and $M_2\in \sL^2(\triangle_2(0,T);\bR^{d\times d_2})$, it holds that $M_1\lint P\rint M_2\in L^\infty(0,T;\bR^{d_1\times d_2})$ and
\begin{equation*}
	\|M_1\lint P\rint M_2\|_{L^\infty(0,T)}\leq\|M_1\|_{\sL^2(\triangle_2(0,T))}\|P\|_{\Pi(0,T)}\|M_2\|_{\sL^2(\triangle_2(0,T))}.
\end{equation*}
Furthermore, $(M_1\lint P\rint M_2)^\top=M^\top_2\lint P\rint M^\top_1$. In particular, $M^\top_2\lint P\rint M_2\in L^\infty(0,T;\bS^{d_2})$.
\item[(iii)]
For each $M\in \sL^2(\triangle_2(0,T);\bR^{d_1\times d})$ and $\xi\in L^2_\bF(\triangle_2(0,T);\bR^{d\times d_2})$, it holds that
\begin{equation*}
	M\lint P\rint\xi\in L^2_\bF(0,T;\bR^{d_1\times d_2}),\ \xi^\top\lint P\rint\xi\in L^1_\bF(0,T;\bS^{d_2}).
\end{equation*}
\end{itemize}
\end{lemm}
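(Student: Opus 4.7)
The proof rests on extracting a uniform bound from \cref{rem_Pi_Banach}: the auxiliary function
\[
	h(s_1,s_2):=\sup_{t\in[0,s_1\wedge s_2]}|P^{(2)}(s_1,s_2,t)|
\]
satisfies $\|P^{(1)}\|_{L^\infty(0,T)}+\|h\|_{L^2((0,T)^2)}\leq\|P\|_{\Pi(0,T)}$. This removes the $t$-dependence of $P^{(2)}$ from all pointwise majorizations and reduces every claim to a standard Fubini/Cauchy--Schwarz computation. All three parts follow a common template: split each $\lint/\rint$ expression into its ``local'' $P^{(1)}$ summand plus its ``kernel'' $P^{(2)}$ summand and estimate each separately.

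For (i), write $(P\rint M)(s,t)=P^{(1)}(s)M(s,t)+I_2(s,t)$ with $I_2(s,t):=\int^T_t P^{(2)}(s,r,t)M(r,t)\,\diff r$. The first summand is controlled pointwise by $\|P^{(1)}\|_{L^\infty}|M(s,t)|$, and for the second one has the $t$-decoupled estimate $|I_2(s,t)|\leq\int^T_t h(s,r)|M(r,t)|\,\diff r$; a Fubini/Cauchy--Schwarz computation tailored to each of the listed norms yields $\|I_2\|_\cL\leq\|h\|_{L^2((0,T)^2)}\|M\|_\cL$. Summing and invoking the key bound gives $\|P\rint M\|_\cL\leq\|P\|_{\Pi(0,T)}\|M\|_\cL$. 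The identity $(P\rint M)^\top=M^\top\lint P$ is a direct computation from $(P^{(1)})^\top=P^{(1)}$ and $P^{(2)}(s_1,s_2,t)=P^{(2)}(s_2,s_1,t)^\top$.

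For (ii), pointwise in $t$ the $P^{(1)}$ part of $(M_1\lint P\rint M_2)(t)$ is bounded via H\"older by $\|P^{(1)}\|_{L^\infty}\|M_1(\cdot,t)\|_{L^2(t,T)}\|M_2(\cdot,t)\|_{L^2(t,T)}$, while the $P^{(2)}$ part is bounded via Cauchy--Schwarz applied to $\iint h(s_1,s_2)|M_1(s_1,t)||M_2(s_2,t)|\,\diff s_1\,\diff s_2$ by $\|h\|_{L^2((t,T)^2)}\|M_1(\cdot,t)\|_{L^2(t,T)}\|M_2(\cdot,t)\|_{L^2(t,T)}$. Taking essential supremum in $t$ delivers the claimed $L^\infty(0,T)$ bound; the transpose identity follows as in (i) together with a relabelling of dummy variables. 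Part (iii) applies the very same pointwise estimates with $M_2$ (respectively $M_1$ and $M_2$) replaced by $\xi$; integrating in $t$ and taking expectation, together with $\|\xi\|_{L^2_\bF(\triangle_2(0,T))}^2=\bE\int^T_0\|\xi(\cdot,t)\|_{L^2(t,T)}^2\,\diff t$, gives the $L^2_\bF(0,T)$ and $L^1_\bF(0,T)$ bounds, and the symmetry of $\xi^\top\lint P\rint\xi$ is immediate from the transpose identity.

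The most delicate point is the ``partition'' clause in the definition of $\sL^2(\triangle_2(0,T))$ in the case $\cL=\sL^2$ of (i); the norm bound alone is not enough. Given $\ep>0$, I would take a common refinement of (a) the partition witnessing the $\sL^2$-property of $M$, which controls the $P^{(1)}$-term directly, and (b) any partition $\{U_i\}$ for which $\int_{U_i}^{U_{i+1}}\int^T_0 h(s,r)^2\,\diff r\,\diff s$ is uniformly small, which exists by absolute continuity of the Lebesgue integral since $h^2\in L^1((0,T)^2)$. The latter controls $\|I_2(\cdot,t)\|_{L^2(t,U_{i+1})}^2$ by $\bigl(\int_t^{U_{i+1}}\int^T_0 h(s,r)^2\,\diff r\,\diff s\bigr)\|M\|_{\sL^2}^2$ after one application of Cauchy--Schwarz on the inner $\diff r$-integral followed by Fubini, giving the required smallness uniformly in $t\in(U_i,U_{i+1})$.
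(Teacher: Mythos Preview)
The paper does not give its own proof of this lemma: it is stated in the preliminaries section as a result imported from the companion paper \cite{HaWa1}, with the blanket remark ``For more detailed discussions and proofs, see \cite{HaWa1}.'' There is therefore no in-paper argument to compare against.

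Your argument is correct and is the natural one. The crucial move is exactly the one you isolate: replacing $|P^{(2)}(s,r,t)|$ by the $t$-free majorant $h(s,r)=\sup_{t\in[0,s\wedge r]}|P^{(2)}(s,r,t)|$, which \cref{rem_Pi_Banach} places in $L^2((0,T)^2)$ with $\|P^{(1)}\|_{L^\infty}+\|h\|_{L^2}\le\|P\|_{\Pi(0,T)}$. After this, the five instances of (i) are mechanical; for instance in the $L^{2,1}$ case one first Fubini's the $r$- and $t$-integrals and then applies Cauchy--Schwarz in $r$, while in the $L^2$ and $\sL^2$ cases one applies Cauchy--Schwarz in $r$ first. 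Your handling of the partition clause in the $\sL^2$ case is the right idea and the only place requiring a separate thought: the absolute continuity of $s\mapsto\int_0^T h(s,r)^2\,\diff r$ in $L^1(0,T)$ gives the refinement controlling the $P^{(2)}$ contribution, and the partition already witnessing $M\in\sL^2$ controls the $P^{(1)}$ contribution. The transpose identities follow line by line from $P^{(1)}(s)^\top=P^{(1)}(s)$ and $P^{(2)}(s_1,s_2,t)^\top=P^{(2)}(s_2,s_1,t)$, together with a relabelling $s_1\leftrightarrow s_2$ in the double integral for (ii). Part (iii) is indeed just (ii) with one or both deterministic factors replaced by $\xi$, followed by integration in $(\omega,t)$.
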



\begin{lemm}\label{lemm_Ito}
Let $P=(P^{(1)},P^{(2)})\in\Pi(0,T)$, $(t_0,x)\in\cI$ and $u\in\cU(t_0,T)$. Then
\begin{align*}
	&\bE\Bigl[\int^T_{t_0}\Bigl\{\langle P^{(1)}(t)X(t),X(t)\rangle+2\int^T_t\langle P^{(2)}(s,t,t)X(t),\Theta(s,t)\rangle\,\diff s\Bigr\}\,\diff t\Bigr]\\
	&=\int^T_{t_0}\langle P^{(1)}(t)x(t),x(t)\rangle\,\diff t+\int^T_{t_0}\!\!\int^T_{t_0}\langle P^{(2)}(s_1,s_2,t_0)x(s_2),x(s_1)\rangle\,\diff s_1\!\,\diff s_2+\bE\Bigl[\int^T_{t_0}(\sigma^\top\lint P\rint\sigma)(t)\,\diff t\Bigr]\\
	&\hspace{0.5cm}+\bE\Bigl[\int^T_{t_0}\Bigl\{\langle(D^\top\lint P\rint D)(t)u(t),u(t)\rangle\\
	&\hspace{2cm}+2\Bigl\langle(D^\top\lint P\rint C)(t)X(t)+\int^T_t(B^\top\lint P)(s,t)\Theta(s,t)\,\diff s+(D^\top\lint P\rint\sigma)(t),u(t)\Bigr\rangle\\
	&\hspace{2cm}+\langle(C^\top\lint P\rint C)(t)X(t),X(t)\rangle+2\int^T_t\langle(P\rint A)(s,t)X(t),\Theta(s,t)\rangle\,\diff s\\
	&\hspace{2cm}+\int^T_t\!\!\int^T_t\langle\dot{P}^{(2)}(s_1,s_2,t)\Theta(s_2,t),\Theta(s_1,t)\rangle\,\diff s_1\!\,\diff s_2\\
	&\hspace{2cm}+2\langle(C^\top\lint P\rint\sigma)(t),X(t)\rangle+2\int^T_t\langle(P\rint b)(s,t),\Theta(s,t)\rangle\,\diff s\Bigr\}\,\diff t\Bigr],
\end{align*}
where $X$ and $\Theta$ are the state process and the forward state process, respectively, corresponding to the input condition $(t_0,x)$ and the control $u$.
\end{lemm}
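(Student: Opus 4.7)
The plan is to exploit the crucial structural fact that, even though $X(\cdot)$ is not a semimartingale, for each fixed $s\in(t_0,T)$ the forward state $\Theta(s,\cdot)$ is a genuine It\^o process on $[t_0,s]$ with drift $A(s,\cdot)X+B(s,\cdot)u+b(s,\cdot)$ and diffusion $C(s,\cdot)X+D(s,\cdot)u+\sigma(s,\cdot)$, and that $X(s)=\Theta(s,s)$ sits on the diagonal while $\Theta(s,t_0)=x(s)$ sits on the initial slice. Thus the idea is to apply It\^o's product rule along the $t$-variable to suitable bilinear forms in $\Theta$, integrate over the remaining $s$-variables, take expectation to kill martingales, and repackage the result with the $\lint/\rint$ notation. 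I will treat the $P^{(1)}$ and $P^{(2)}$ parts separately and then combine.

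For the $P^{(1)}$ part, I fix $s\in(t_0,T)$ and apply It\^o's formula in $t\in[t_0,s]$ to $Z(s,t):=\langle P^{(1)}(s)\Theta(s,t),\Theta(s,t)\rangle$. Since $P^{(1)}(s)$ is independent of $t$, the product rule gives two drift contributions and one quadratic-variation contribution from the diffusion of $\Theta(s,\cdot)$. Integrating from $t_0$ to $s$, using $Z(s,s)=\langle P^{(1)}(s)X(s),X(s)\rangle$ and $Z(s,t_0)=\langle P^{(1)}(s)x(s),x(s)\rangle$, then integrating in $s\in(t_0,T)$, applying Fubini to the region $\{t_0<t<s<T\}$, and taking expectation yields
\begin{align*}
	&\bE\int^T_{t_0}\langle P^{(1)}(t)X(t),X(t)\rangle\,\diff t-\int^T_{t_0}\langle P^{(1)}(t)x(t),x(t)\rangle\,\diff t\\
	&=\bE\int^T_{t_0}\!\!\int^T_t 2\langle P^{(1)}(s)\Theta(s,t),[AX+Bu+b](s,t)\rangle\,\diff s\,\diff t\\
	&\quad+\bE\int^T_{t_0}\!\!\int^T_t\langle P^{(1)}(s)[CX+Du+\sigma](s,t),[CX+Du+\sigma](s,t)\rangle\,\diff s\,\diff t.
\end{align*}

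For the $P^{(2)}$ part, I fix $(s_1,s_2)\in(t_0,T)^2$ and apply It\^o's formula in $t\in[t_0,s_1\wedge s_2]$ to $Y(s_1,s_2,t):=\langle P^{(2)}(s_1,s_2,t)\Theta(s_1,t),\Theta(s_2,t)\rangle$. Because $P^{(2)}(s_1,s_2,\cdot)$ is absolutely continuous, the product rule produces the $\dot P^{(2)}$-term, two drift terms (one for each $\Theta$-factor), and a cross quadratic-variation term involving $P^{(2)}(s_1,s_2,t)[CX+Du+\sigma](s_1,t)$ paired with $[CX+Du+\sigma](s_2,t)$. After integrating in $t$ from $t_0$ up to $s_1\wedge s_2$, integrating over $(s_1,s_2)\in(t_0,T)^2$, and taking expectation, the boundary at the diagonal contributes $\int\!\!\int Y(s_1,s_2,s_1\wedge s_2)\,\diff s_1\,\diff s_2$; splitting this into the regions $s_1<s_2$ and $s_1>s_2$, using $\Theta(s_1\wedge s_2,s_1\wedge s_2)=X(s_1\wedge s_2)$, and invoking the symmetry $P^{(2)}(s_1,s_2,t)=P^{(2)}(s_2,s_1,t)^\top$ to identify the two pieces, one gets exactly $2\bE\int^T_{t_0}\!\int^T_t\langle P^{(2)}(s,t,t)X(t),\Theta(s,t)\rangle\,\diff s\,\diff t$. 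The initial slice contributes $\int^T_{t_0}\!\int^T_{t_0}\langle P^{(2)}(s_1,s_2,t_0)x(s_2),x(s_1)\rangle\,\diff s_1\,\diff s_2$, again after a symmetric relabelling of variables.

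Adding the two identities and rearranging yields the claimed formula, provided the integrals in $(s_1,s_2,t)\in\square_3(t_0,T)$ produced by the drift and covariation terms are identified with their $\lint/\rint$ counterparts from \cref{defi_lint_rint}: for instance, the drift terms $A(s_i,t)X(t)$ and the cross-diffusion terms $C(s_i,t)X(t)$ assemble into $2\int^T_t\langle(P\rint A)(s,t)X(t),\Theta(s,t)\rangle\,\diff s$ and $\langle(C^\top\lint P\rint C)(t)X(t),X(t)\rangle$ after integrating out one or two of $s_1,s_2$; similarly the $u(t)$-terms assemble into $2\langle(D^\top\lint P\rint C)(t)X(t),u(t)\rangle$, $2\int^T_t\langle(B^\top\lint P)(s,t)\Theta(s,t),u(t)\rangle\,\diff s$, and $\langle(D^\top\lint P\rint D)(t)u(t),u(t)\rangle$, and the inhomogeneous $b,\sigma$ terms give $2\int^T_t\langle(P\rint b)(s,t),\Theta(s,t)\rangle\,\diff s$, $2\langle(C^\top\lint P\rint\sigma)(t),X(t)\rangle$, $2\langle(D^\top\lint P\rint\sigma)(t),u(t)\rangle$ and $(\sigma^\top\lint P\rint\sigma)(t)$. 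The principal obstacle is therefore not conceptual but notational: the careful bookkeeping to match every cross-term with the correct $\lint/\rint$ pattern, together with the justification of Fubini's theorem and the vanishing of the It\^o martingale increments in expectation, which is guaranteed by the $\Pi(0,T)$-regularity of $P$ (\cref{rem_Pi_Banach}), the continuity estimate \eqref{eq_closed-loop_estimate}, and the integrability statements of \cref{lemm_lint-rint_integrability}.
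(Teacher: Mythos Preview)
The paper does not prove this lemma here; it is imported from the companion paper \cite{HaWa1} as part of the preliminaries in \cref{section_preliminaries}, so there is no in-paper proof to compare against. Your approach---applying It\^o's formula in the $t$-variable to $\langle P^{(1)}(s)\Theta(s,t),\Theta(s,t)\rangle$ for fixed $s$ and to $\langle P^{(2)}(s_1,s_2,t)\Theta(s_2,t),\Theta(s_1,t)\rangle$ for fixed $(s_1,s_2)$, then integrating over the outer parameters, using the boundary identifications $\Theta(s,t_0)=x(s)$ and $\Theta(s,s)=X(s)$, and finally reassembling the resulting double/triple integrals into the $\lint/\rint$ notation---is exactly the natural one and is what one expects the argument in \cite{HaWa1} to be. One cosmetic slip: in your definition of $Y(s_1,s_2,t)$ the $\Theta$-arguments are transposed relative to the convention in the statement (compare the $\dot P^{(2)}$ term, where $P^{(2)}(s_1,s_2,t)$ acts on $\Theta(s_2,t)$ and is paired with $\Theta(s_1,t)$), but the symmetry $P^{(2)}(s_1,s_2,t)=P^{(2)}(s_2,s_1,t)^\top$ makes this immaterial after integration over $(s_1,s_2)$.
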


For each $(\Xi,\Gamma)\in L^\infty(0,T;\bR^{\ell\times d})\times L^2(\triangle_2(0,T);\bR^{\ell\times d})$ and $P=(P^{(1)},P^{(2)})\in\Pi(0,T)$, define
\begin{equation}\label{eq_Lyapunov--Volterra_coefficients}
\begin{split}
	&F^{(1)}[\Xi;P](t):=(C^\top\lint P\rint C)(t)+\Xi(t)^\top(D^\top\lint P\rint C)(t)+(C^\top\lint P\rint D)(t)\Xi(t)+\Xi(t)^\top(D^\top\lint P\rint D)(t)\Xi(t),\\
	&\hspace{6cm}t\in(0,T),\\
	&F^{(2)}[\Xi,\Gamma;P](s,t):=(P\rint A)(s,t)+(P\rint B)(s,t)\Xi(t)+\Gamma(s,t)^\top(D^\top\lint P\rint C)(t)+\Gamma(s,t)^\top(D^\top\lint P\rint D)(t)\Xi(t),\\
	&\hspace{6cm}(s,t)\in\triangle_2(0,T),\\
	&F^{(3)}[\Gamma;P](s_1,s_2,t):=\Gamma(s_1,t)^\top(B^\top\lint P)(s_2,t)+(P\rint B)(s_1,t)\Gamma(s_2,t)+\Gamma(s_1,t)^\top(D^\top\lint P\rint D)(t)\Gamma(s_2,t),\\
	&\hspace{6cm}(s_1,s_2,t)\in\square_3(0,T).
\end{split}
\end{equation}
Observe that the map
\begin{equation*}
	P\mapsto(F^{(1)}[\Xi;P],F^{(2)}[\Xi,\Gamma;P],F^{(3)}[\Gamma;P]).
\end{equation*}
is a bounded linear operator from $\Pi(0,T)$ to $L^\infty(0,T;\bS^d)\times L^2(\triangle_2(0,T);\bR^{d\times d})\times L^{2,2,1}_\sym(\square_3(0,T);\bR^{d\times d})$.

For each $(Q^{(1)},Q^{(2)},Q^{(3)})\in L^\infty(0,T;\bS^d)\times L^2(\triangle_2(0,T);\bR^{d\times d})\times L^{2,2,1}_\sym(\square_3(0,T);\bR^{d\times d})$, we introduce the following \emph{Lyapunov--Volterra equation}:
\begin{equation}\label{eq_Lyapunov--Volterra}
	\begin{dcases}
	P^{(1)}(t)=F^{(1)}[\Xi;P](t)+Q^{(1)}(t),\ t\in(0,T),\\
	P^{(2)}(s,t,t)=P^{(2)}(t,s,t)^\top=F^{(2)}[\Xi,\Gamma;P](s,t)+Q^{(2)}(s,t),\ (s,t)\in\triangle_2(0,T),\\
	\dot{P}^{(2)}(s_1,s_2,t)+F^{(3)}[\Gamma;P](s_1,s_2,t)+Q^{(3)}(s_1,s_2,t)=0,\ (s_1,s_2,t)\in\square_3(0,T).
	\end{dcases}
\end{equation}
We say that $P=(P^{(1)},P^{(2)})\in\Pi(0,T)$ satisfying the above equalities a solution to the Lyapunov--Volterra equation \eqref{eq_Lyapunov--Volterra}.


\begin{theo}\label{theo_Lyapunov--Volterra}
Let $(\Xi,\Gamma)\in L^\infty(0,T;\bR^{\ell\times d})\times L^2(\triangle_2(0,T);\bR^{\ell\times d})$. For each $(Q^{(1)},Q^{(2)},Q^{(3)})\in L^\infty(0,T;\bS^d)\times L^2(\triangle_2(0,T);\bR^{d\times d})\times L^{2,2,1}_\sym(\square_3(0,T);\bR^{d\times d})$, There exists a unique solution $P=(P^{(1)},P^{(2)})\in\Pi(0,T)$ to the Lyapunov--Volterra equation \eqref{eq_Lyapunov--Volterra}. Furthermore, for any $(t_0,x)\in\cI$, the following holds:
\begin{equation}\label{eq_quadratic_representation}
\begin{split}
	&\int^T_{t_0}\langle P^{(1)}(t)x(t),x(t)\rangle\,\diff t+\int^T_{t_0}\!\!\int^T_{t_0}\langle P^{(2)}(s_1,s_2,t_0)x(s_2),x(s_1)\rangle\,\diff s_1\!\,\diff s_2\\
	&=\bE\Bigl[\int^T_{t_0}\Bigl\{\langle Q^{(1)}(t)X^{t_0,x}_0(t),X^{t_0,x}_0(t)\rangle+2\int^T_t\langle Q^{(2)}(s,t)X^{t_0,x}_0(t),\Theta^{t_0,x}_0(s,t)\rangle\,\diff s\\
	&\hspace{4cm}+\int^T_t\!\!\int^T_t\langle Q^{(3)}(s_1,s_2,t)\Theta^{t_0,x}_0(s_2,t),\Theta^{t_0,x}_0(s_1,t)\rangle\,\diff s_1\!\,\diff s_2\Bigr\}\,\diff t\Bigr],
\end{split}
\end{equation}
where $(X^{t_0,x}_0,\Theta^{t_0,x}_0)\in L^2_\bF(t_0,T;\bR^d)\times L^2_{\bF,\mathrm{c}}(\triangle_2(t_0,T);\bR^d)$ is the causal feedback solution to the homogeneous controlled SVIE \eqref{eq_state_0} at $(t_0,x)$ corresponding to $(\Xi,\Gamma,0)\in\cS(0,T)$.
\end{theo}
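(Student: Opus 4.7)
I would first recast the Lyapunov--Volterra equation as a linear fixed-point equation on the Banach space $\Pi(0,T)$. Integrating the third equation $\dot{P}^{(2)} = -F^{(3)}[\Gamma;P] - Q^{(3)}$ backward in the time variable from the diagonal, and using the second equation to prescribe the diagonal value $P^{(2)}(s_1,s_2,s_1\wedge s_2)$, yields, for $s_1>s_2>t$,
\begin{equation*}
P^{(2)}(s_1,s_2,t) = F^{(2)}[\Xi,\Gamma;P](s_1,s_2) + Q^{(2)}(s_1,s_2) + \int_t^{s_2}\!\!\bigl\{F^{(3)}[\Gamma;P](s_1,s_2,r) + Q^{(3)}(s_1,s_2,r)\bigr\}\,\diff r,
\end{equation*}
and a transposed formula for $s_2>s_1>t$. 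Combined with the first equation $P^{(1)}(t) = F^{(1)}[\Xi;P](t) + Q^{(1)}(t)$, the problem becomes $P = \Phi P + \mathcal{Q}$, where $\Phi$ is a bounded linear operator on $\Pi(0,T)$ by the stated boundedness of $P\mapsto(F^{(1)},F^{(2)},F^{(3)})$ and the continuity of the backward integration.

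Second, I would establish unique solvability by a step-wise contraction argument exploiting the $\sL^2$ structure of $C$ and $D$. For any $\epsilon>0$, the definition of $\sL^2(\triangle_2(0,T))$ supplies a finite partition $0=U_0<U_1<\cdots<U_m=T$ along which the relevant $\sL^2$-norms of $C$ and $D$ on each subslice are bounded by $\epsilon$. On the rightmost slice $(U_{m-1},T)$, the estimates in \cref{lemm_lint-rint_integrability}(ii) applied to $C$ and $D$ inherit this smallness and give for $\Phi$ an operator norm of order $\epsilon^2(1+\|\Xi\|_{L^\infty}^2) + \epsilon\|\Gamma\|_{L^2}$, hence a contraction for $\epsilon$ small enough. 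Banach's fixed-point theorem then delivers a unique solution on that slice. Iterating backward, the already-constructed values on $(U_i,T)$ enter the equation on $(U_{i-1},U_i)$ only as frozen source data (through integrals $\int_{U_i}^T$ with kernels of bounded norm), and the unknown restrictions to $(U_{i-1},U_i)$ satisfy a fixed-point equation of the same structure on a short interval; concatenation produces the desired $P\in\Pi(0,T)$.

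Third, to verify the representation \eqref{eq_quadratic_representation}, I would apply the Ito-type formula \cref{lemm_Ito} to the causal feedback solution $(X^{t_0,x}_0,\Theta^{t_0,x}_0)$ of the homogeneous controlled SVIE, with $b=\sigma=0$ and outcome $u^{t_0,x}_0 = \Xi X^{t_0,x}_0 + \int_\cdot^T \Gamma(s,\cdot)\,\Theta^{t_0,x}_0(s,\cdot)\,\diff s$. Expanding the two terms in $u$ on the RHS of \cref{lemm_Ito}, namely $\langle D^\top\lint P\rint D\,u,u\rangle$ and the cross term $2\langle D^\top\lint P\rint C\,X + \int_t^T (B^\top\lint P)(s,t)\Theta(s,t)\,\diff s, u\rangle$, via the feedback form of $u$, and then grouping coefficients of the $XX$, $X\Theta$, and $\Theta\Theta$ bilinear blocks, by the definitions in \eqref{eq_Lyapunov--Volterra_coefficients} these coefficients equal exactly $F^{(1)}[\Xi;P]$, $F^{(2)}[\Xi,\Gamma;P]$, and $F^{(3)}[\Gamma;P] + \dot{P}^{(2)}$, respectively. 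The Lyapunov--Volterra equation replaces these by $P^{(1)} - Q^{(1)}$, $P^{(2)}(s,t,t) - Q^{(2)}(s,t)$, and $-Q^{(3)}$, so the $P$-dependent contributions on the RHS cancel the LHS of \cref{lemm_Ito} exactly, leaving the $Q^{(i)}$-quadratic functional claimed in \eqref{eq_quadratic_representation}. The main obstacle is the algebraic bookkeeping in this step, especially symmetrizing $P^{(2)}$ and $Q^{(2)}$ consistently with the prefactor $2$ on the $X\Theta$ cross-term in \cref{lemm_Ito} and checking that every $P$-dependent contribution cancels after full expansion; by comparison the step-wise contraction is largely routine once the $\sL^2$-partition is fixed.
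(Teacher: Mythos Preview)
The paper does not prove this theorem in-text; it is quoted in Section~3 as a result from the companion paper \cite{HaWa1}, so there is no in-paper argument to compare against directly.

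Your plan is the natural one and is correct in outline. The verification of \eqref{eq_quadratic_representation} via \cref{lemm_Ito} with $b=\sigma=0$ and $u=\Xi X+\int\Gamma\Theta$ is precisely how the formula is designed to fall out: the maps $F^{(1)},F^{(2)},F^{(3)}$ in \eqref{eq_Lyapunov--Volterra_coefficients} are defined so that substituting the feedback form of $u$ into the right-hand side of \cref{lemm_Ito} and regrouping the $XX$, $X\Theta$, $\Theta\Theta$ blocks produces exactly $F^{(1)}$, $F^{(2)}$ and $\dot P^{(2)}+F^{(3)}$, which the equation then converts to the $Q^{(i)}$'s.

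There is one imprecision in the step-wise contraction. You assert that on the slice $(U_{i-1},U_i)$ the previously constructed data enter only through integrals $\int_{U_i}^T$ with bounded kernels. But for $t\in(U_{i-1},U_i)$ the term $(C^\top\lint P\rint C)(t)$ contains
\[
\int_t^T\!\!\int_t^T C(s_1,t)^\top P^{(2)}(s_1,s_2,t)\,C(s_2,t)\,\diff s_1\,\diff s_2,
\]
and $P^{(2)}(s_1,s_2,t)$ with $s_1,s_2>U_i$ but $t<U_i$ is an \emph{unknown} on the current slice---only its value at $t=U_i$ is frozen---while the weights $C(s_j,t)$ in that region are not small. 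The contraction therefore does not follow from the $\sL^2$ smallness of $C,D$ on the slice alone. You need to carry $P^{(2)}(\cdot,\cdot,t)$ for all $t\in(U_{i-1},U_i)$ (and all admissible $s_1,s_2\in(t,T)$) as part of the slice unknown, propagate it from its known value at $t=U_i$ (or from the diagonal via $F^{(2)}$) through the ODE $\dot P^{(2)}=-F^{(3)}-Q^{(3)}$, and extract smallness from the short $t$-integral together with the $L^2$ smallness of $A,B,\Gamma$ restricted to the slice. This is a routine but real extra layer of estimates beyond what you sketched; the claim that the intermediate steps are ``of the same structure on a short interval'' needs this justification before it can be invoked.
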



\section{Representation of the cost functional}\label{section_representation_cost}

Based on our previous results \cite{HaWa1} summarized in \cref{section_preliminaries}, we provide a useful representation formula for the cost functional.

For each $(\Xi,\Gamma)\in L^\infty(0,T;\bR^{\ell\times d})\times L^2(\triangle_2(0,T);\bR^{\ell\times d})$, define
\begin{equation}\label{eq_Lyapunov--Volterra_inhomogeneous}
\begin{split}
	&Q^{(1)}[\Xi](t):=Q(t)+\Xi(t)^\top S(t)+S(t)^\top\Xi(t)+\Xi(t)^\top R(t)\Xi(t),\ t\in(0,T),\\
	&Q^{(2)}[\Xi,\Gamma](s,t):=\Gamma(s,t)^\top S(t)+\Gamma(s,t)^\top R(t)\Xi(t),\ (s,t)\in\triangle_2(0,T),\\
	&Q^{(3)}[\Gamma](s_1,s_2,t):=\Gamma(s_1,t)^\top R(t)\Gamma(s_2,t),\ (s_1,s_2,t)\in\square_3(0,T).
\end{split}
\end{equation}
It is easy to see that
\begin{equation*}
	(Q^{(1)}[\Xi],Q^{(2)}[\Xi,\Gamma],Q^{(3)}[\Gamma])\in L^\infty(0,T;\bS^d)\times L^2(\triangle_2(0,T);\bR^{d\times d})\times L^{2,2,1}_\sym(\square_3(0,T);\bR^{d\times d}).
\end{equation*}


\begin{theo}\label{theo_representation_cost}
Let $(\Xi,\Gamma,v)\in\cS(0,T)$ be arbitrary. Let $P=(P^{(1)},P^{(2)})\in\Pi(0,T)$ be the solution to the Lyapunov--Volterra equation \begin{equation}\label{eq_Lyapunov--Volterra_(Xi,Gamma)}
	\begin{dcases}
	P^{(1)}(t)=F^{(1)}[\Xi;P](t)+Q^{(1)}[\Xi](t),\ t\in(0,T),\\
	P^{(2)}(s,t,t)=P^{(2)}(t,s,t)^\top=F^{(2)}[\Xi,\Gamma;P](s,t)+Q^{(2)}[\Xi,\Gamma](s,t),\ (s,t)\in\triangle_2(0,T),\\
	\dot{P}^{(2)}(s_1,s_2,t)+F^{(3)}[\Gamma;P](s_1,s_2,t)+Q^{(3)}[\Gamma](s_1,s_2,t)=0,\ (s_1,s_2,t)\in\square_3(0,T),
	\end{dcases}
\end{equation}
and let $(\eta,\zeta)\in L^2_{\bF,\mathrm{c}}(\triangle_2(0,T);\bR^d)\times L^2_\bF(\triangle_2(0,T);\bR^d)$ be the adapted solution to the Type-II EBSVIE
\begin{equation}\label{eq_Type-II_EBSVIE_(Xi,Gamma,v)}
	\begin{dcases}
	\mathrm{d}\eta(t,s)=-\Bigl\{(P\rint b)(t,s)+\Gamma(t,s)^\top(\rho(s)+(D^\top\lint P\rint\sigma)(s))\\
	\hspace{2.5cm}+((P\rint B)(t,s)+\Gamma(t,s)^\top(R(s)+(D^\top\lint P\rint D)(s)))v(s)\\
	\hspace{2.5cm}+\Gamma(t,s)^\top\int^T_sB(r,s)^\top\eta(r,s)\,\diff r+\Gamma(t,s)^\top\int^T_sD(r,s)^\top\zeta(r,s)\,\diff r\Bigr\}\,\diff s\\
	\hspace{2cm}+\zeta(t,s)\,\diff W(s),\ (t,s)\in\triangle_2(0,T),\\
	\eta(t,t)=q(t)+(C^\top\lint P\rint\sigma)(t)+\Xi(t)^\top(\rho(t)+(D^\top\lint P\rint\sigma)(t))\\
	\hspace{2cm}+(S(t)^\top+(C^\top\lint P\rint D)(t)+\Xi(t)^\top(R(t)+(D^\top\lint P\rint D)(t)))v(t)\\
	\hspace{2cm}+\int^T_t(A+B\triangleright\Xi)(r,t)^\top\eta(r,t)\,\diff r+\int^T_t(C+D\triangleright\Xi)(r,t)^\top\zeta(r,t)\,\diff r,\ t\in(0,T).
	\end{dcases}
\end{equation}
Define $\kappa\in L^2_\bF(0,T;\bR^\ell)$ by
\begin{equation*}
	\kappa(t):=\rho(t)\mathalpha{+}(D^\top\lint P\rint\sigma)(t)\mathalpha{+}\int^T_tB(s,t)^\top\eta(s,t)\,\diff s\mathalpha{+}\int^T_tD(s,t)^\top\zeta(s,t)\,\diff s,\ t\in(0,T).
\end{equation*}
Then for any $(t_0,x)\in\cI$, $\tilde{v}\in\cU(0,T)$ and $\mu\in\bR$, it holds that
\begin{equation}\label{eq_representation_cost}
\begin{split}
	&J(t_0,x;(\Xi,\Gamma,v+\mu \tilde{v})[t_0,x])\\
	&=\int^T_{t_0}\langle P^{(1)}(t)x(t),x(t)\rangle\,\diff t+\int^T_{t_0}\!\!\int^T_{t_0}\langle P^{(2)}(s_1,s_2,t_0)x(s_2),x(s_1)\rangle\,\diff s_1\!\,\diff s_2+2\int^T_{t_0}\langle\bE[\eta(t,t_0)],x(t)\rangle\,\diff t\\
	&\hspace{0.5cm}+\bE\Bigl[\int^T_{t_0}\Bigl\{(\sigma^\top\lint P\rint\sigma)(t)+2\int^T_t\langle\eta(s,t),b(s,t)\rangle\,\diff s+2\int^T_t\langle\zeta(s,t),\sigma(s,t)\rangle\,\diff s\\
	&\hspace{2cm}+\langle(R(t)+(D^\top\lint P\rint D)(t))v(t),v(t)\rangle+2\langle\kappa(t),v(t)\rangle\Bigr\}\,\diff t\Bigr]\\
	&\hspace{0.5cm}+\mu^2\bE\Bigl[\int^T_{t_0}\Bigl\{\langle(R(t)+(D^\top\lint P\rint D)(t))\tilde{v}(t),\tilde{v}(t)\rangle\\
	&\hspace{2cm}+2\Bigl\langle(S(t)+(D^\top\lint P\rint C)(t)+(R(t)+(D^\top\lint P\rint D)(t))\Xi(t))\tilde{X}^{t_0,0}_0(t)\\
	&\hspace{3cm}+\int^T_t((B^\top\rint P)(s,t)+(R(t)+(D^\top\lint P\rint D)(t))\Gamma(s,t))\tilde{\Theta}^{t_0,0}_0(s,t)\,\diff s,\tilde{v}(t)\Bigr\rangle\Bigr\}\,\diff t\Bigr]\\
	&\hspace{0.5cm}+2\mu\bE\Bigl[\int^T_{t_0}\Bigl\langle(S(t)+(D^\top\lint P\rint C)(t)+(R(t)+(D^\top\lint P\rint D)(t))\Xi(t))X^{t_0,x}(t)\\
	&\hspace{3cm}+\int^T_t((B^\top\rint P)(s,t)+(R(t)+(D^\top\lint P\rint D)(t))\Gamma(s,t))\Theta^{t_0,x}(s,t)\,\diff s\\
	&\hspace{3cm}+\kappa(t)+(R(t)+(D^\top\lint P\rint D)(t))v(t),\tilde{v}(t)\Bigr\rangle\,\diff t\Bigr],
\end{split}
\end{equation}
where $(X^{t_0,x},\Theta^{t_0,x})\in L^2_\bF(t_0,T;\bR^d)\times L^2_{\bF,\mathrm{c}}(\triangle_2(t_0,T);\bR^d)$ is the causal feedback solution to the controlled SVIE \eqref{eq_state} at $(t_0,x)$ corresponding to $(\Xi,\Gamma,v)$, and $(\tilde{X}^{t_0,0}_0,\tilde{\Theta}^{t_0,0}_0)\in L^2_\bF(t_0,T;\bR^d)\times L^2_{\bF,\mathrm{c}}(\triangle_2(t_0,T);\bR^d)$ is the causal feedback solution to the homogeneous controlled SVIE \eqref{eq_state_0} at $(t_0,0)$ corresponding to $(\Xi,\Gamma,\tilde{v})$.
\end{theo}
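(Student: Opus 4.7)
The plan is to decompose, reduce via an It\^o-type identity, and recognize the remainder via duality. By linearity of the closed-loop system \eqref{eq_closed-loop_system}, the causal feedback solution at $(t_0,x)$ corresponding to $(\Xi,\Gamma,v+\mu\tilde v)$ splits as
\begin{equation*}
\bigl(X^{t_0,x}+\mu\tilde X^{t_0,0}_0,\ \Theta^{t_0,x}+\mu\tilde\Theta^{t_0,0}_0\bigr),
\end{equation*}
with outcome $(\Xi,\Gamma,v)[t_0,x]+\mu(\Xi,\Gamma,\tilde v)^0[t_0,0]$. Inserting into \eqref{eq_cost} and expanding produces three pieces: a $\mu^0$ term $J_0:=J(t_0,x;(\Xi,\Gamma,v)[t_0,x])$, a bilinear cross term at order $\mu$, and a quadratic piece in the homogeneous state at order $\mu^2$, to be matched respectively with the constant, the $2\mu$, and the $\mu^2$ blocks of the right-hand side of \eqref{eq_representation_cost} (``Part A'', ``Part C'', and ``Part B'').

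For the $\mu^0$ piece I let $P$ be the unique solution in $\Pi(0,T)$ of the Lyapunov--Volterra equation \eqref{eq_Lyapunov--Volterra_(Xi,Gamma)} and add to $J_0$ the null identity ``RHS $-$ LHS$=0$'' of Lemma \ref{lemm_Ito}. After substituting the feedback $u=\Xi X+\int^T_t\Gamma(s,t)\Theta(s,t)\,\diff s+v$ into the integrand of $J_0$ and into the $u$-dependent terms on the right-hand side of Lemma \ref{lemm_Ito}, the coefficients in front of the pairings $\langle\cdot X,X\rangle$, $\langle\cdot X,\Theta\rangle$, and $\langle\cdot\Theta,\Theta\rangle$ group into the combinations $Q^{(j)}[\,\cdot\,]+F^{(j)}[\,\cdot\,;P]-P^{(j)}$ (or $+\dot P^{(2)}$ in the $\Theta\Theta$ case), which vanish by \eqref{eq_Lyapunov--Volterra_(Xi,Gamma)}. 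What survives is (i) the initial and $\sigma$-driven piece $\int\langle P^{(1)}x,x\rangle+\int\!\!\int\langle P^{(2)}(\cdot,\cdot,t_0)x,x\rangle+\bE\int(\sigma^\top\lint P\rint\sigma)$, (ii) the pure $v$-quadratic $\bE\int\langle(R+D^\top\lint P\rint D)v,v\rangle$, and (iii) linear-in-$(X,\Theta)$ integrands whose diagonal and off-diagonal coefficients match exactly the data $\psi(t)$ and $\chi(s,t)$ of the Type-II EBSVIE \eqref{eq_Type-II_EBSVIE_(Xi,Gamma,v)}. Applying the duality principle \eqref{eq_duality} of Theorem \ref{theo_Type-II_EBSVIE} to piece (iii) converts it into the $\eta,\zeta$ pairings against $x,b,\sigma$ and the $\langle\kappa,v\rangle$ contribution, producing Part A.

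The $\mu^2$ piece is handled by the same template applied to $(\tilde X^{t_0,0}_0,\tilde\Theta^{t_0,0}_0)$ with trivial data $x=0,b=0,\sigma=0$ and feed-forward $\tilde v$: Lemma \ref{lemm_Ito} and the Lyapunov--Volterra equation \eqref{eq_Lyapunov--Volterra_(Xi,Gamma)} again eliminate all pure quadratic-in-$(\tilde X,\tilde\Theta)$ contributions, leaving only the $\tilde v$-coupled integrands and the pure $\bE\int\langle(R+D^\top\lint P\rint D)\tilde v,\tilde v\rangle$ term that constitute Part B. The $\mu^1$ cross piece is obtained by polarization: deriving the same identity for the strategy $(\Xi,\Gamma,v+\tilde v)$ at $(t_0,x)$ and subtracting the identities for $v$ at $(t_0,x)$ and for $\tilde v$ at $(t_0,0)$ isolates the bilinear coupling between $(X^{t_0,x},\Theta^{t_0,x},v)$ and $(\tilde X^{t_0,0}_0,\tilde\Theta^{t_0,0}_0,\tilde v)$; reading off the $v$-dependent drift of \eqref{eq_Type-II_EBSVIE_(Xi,Gamma,v)} together with the definition of $\kappa$ yields exactly the coefficients of $X^{t_0,x}$, $\Theta^{t_0,x}$, and $v$ against $\tilde v$ displayed in Part C.

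The main technical obstacle lies in the bookkeeping of the second paragraph: one must verify that, after substituting $u=\Xi X+\int\Gamma\Theta\,\diff s+v$, the quadratic-in-$(X,\Theta)$ coefficients of the cost and of Lemma \ref{lemm_Ito} align so as to produce precisely the combinations $Q^{(j)}[\,\cdot\,]+F^{(j)}[\,\cdot\,;P]$ for $j=1,2,3$ (so that \eqref{eq_Lyapunov--Volterra_(Xi,Gamma)} cancels them), while the remaining linear-in-$(X,\Theta)$ coefficients, which depend on $q,\rho,S,R,v,P,b,\sigma$, exactly reconstruct $\psi(t)$ on the diagonal and $\chi(s,t)$ off-diagonal in \eqref{eq_Type-II_EBSVIE_(Xi,Gamma,v)}. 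These rearrangements are mechanical but delicate, relying on the transposition rules in Definition \ref{defi_lint_rint} and on careful symmetrization between the $X\Theta$ and $\Theta X$ pairings as well as between the two variables of $P^{(2)}$.
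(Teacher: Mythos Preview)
Your approach is essentially the paper's: add the ``null identity'' from Lemma~\ref{lemm_Ito}, use the Lyapunov--Volterra equation \eqref{eq_Lyapunov--Volterra_(Xi,Gamma)} to kill the quadratic-in-$(X,\Theta)$ terms, and apply the duality principle of Theorem~\ref{theo_Type-II_EBSVIE} to the remaining linear-in-$(X,\Theta)$ terms. The only organizational difference is the order of operations. The paper works directly with the full perturbed pair $(\tilde X_1^{t_0,x},\tilde\Theta_1^{t_0,x})$ corresponding to $(\Xi,\Gamma,v+\mu\tilde v)$, applies Lemma~\ref{lemm_Ito} and duality \emph{once} to this pair, and only at the very end substitutes $\tilde X_1^{t_0,x}=X^{t_0,x}+\mu\tilde X_0^{t_0,0}$, $\tilde\Theta_1^{t_0,x}=\Theta^{t_0,x}+\mu\tilde\Theta_0^{t_0,0}$ to read off the three powers of $\mu$ simultaneously. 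You instead split in $\mu$ first and treat each power separately.

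Your treatment of the $\mu^0$ and $\mu^2$ pieces is fine and matches the paper. The $\mu^1$ piece, however, is underspecified. Your polarization ``Part~A$[v+\tilde v]$ minus Part~A$[v]$ minus Part~B$[\tilde v]$'' is correct as an identity, but Part~A$[v+\tilde v]$ involves the adapted solution $(\eta_{v+\tilde v},\zeta_{v+\tilde v})$ of \eqref{eq_Type-II_EBSVIE_(Xi,Gamma,v)} with $v$ replaced by $v+\tilde v$, hence also a shifted $\kappa_{v+\tilde v}$. To reduce the difference to the displayed form of Part~C (which is written in terms of $(X^{t_0,x},\Theta^{t_0,x})$ and the original $\kappa$) you need either a second application of duality to the incremental pair $(\eta_{v+\tilde v}-\eta_v,\zeta_{v+\tilde v}-\zeta_v)$, or the linearity of \eqref{eq_Type-II_EBSVIE_(Xi,Gamma,v)} in $v$ spelled out explicitly; ``reading off the $v$-dependent drift'' does not by itself produce the $X^{t_0,x}$, $\Theta^{t_0,x}$ pairings. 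The paper's single-pass organization sidesteps this entirely, which is the main advantage of doing the $\mu$-split last.
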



\begin{rem}
Noting \cref{lemm_lint-rint_integrability}, the well-posedness of the Lyapunov--Volterra equation \eqref{eq_Lyapunov--Volterra_(Xi,Gamma)} and the Type-II EBSVIE \eqref{eq_Type-II_EBSVIE_(Xi,Gamma,v)} follow from \cref{theo_Lyapunov--Volterra} and \cref{theo_Type-II_EBSVIE}, respectively.
\end{rem}


\begin{proof}[Proof of \cref{theo_representation_cost}]
First, we let $P=(P^{(1)},P^{(2)})\in\Pi(0,T)$ be arbitrary. Observe that, for any $(t_0,x)\in\cI$ and $u\in\cU(t_0,T)$,
\begin{align*}
	&J(t_0,x;u)\\
	&=\bE\Bigl[\int^T_{t_0}\{\langle Q(t)X(t),X(t)\rangle+\langle R(t)u(t),u(t)\rangle+2\langle S(t)X(t),u(t)\rangle+2\langle q(t),X(t)\rangle+2\langle\rho(t),u(t)\rangle\}\,\diff t\Bigr]\\
	&=\bE\Bigl[\int^T_{t_0}\Bigl\{\langle P^{(1)}(t)X(t),X(t)\rangle+2\int^T_t\langle P^{(2)}(s,t,t)X(t),\Theta(s,t)\rangle\,\diff s\Bigr\}\,\diff t\Bigr]\\
	&\hspace{0.5cm}+\bE\Bigl[\int^T_{t_0}\Bigl\{\langle R(t)u(t),u(t)\rangle+2\langle S(t)X(t)+\rho(t),u(t)\rangle\\
	&\hspace{2cm}+\langle(-P^{(1)}(t)+Q(t))X(t),X(t)\rangle-2\int^T_t\langle P^{(2)}(s,t,t)X(t),\Theta(s,t)\rangle\,\diff s+2\langle q(t),X(t)\rangle\Bigr\}\,\diff t\Bigr],
\end{align*}
where $(X,\Theta)$ is the corresponding state pair. By \cref{lemm_Ito}, we have
\begin{equation}\label{eq_J}
\begin{split}
	&J(t_0,x;u)\\
	&=\int^T_{t_0}\langle P^{(1)}(t)x(t),x(t)\rangle\,\diff t+\int^T_{t_0}\!\!\int^T_{t_0}\langle P^{(2)}(s_1,s_2,t_0)x(s_2),x(s_1)\rangle\,\diff s_1\!\,\diff s_2+\bE\Bigl[\int^T_{t_0}(\sigma^\top\lint P\rint\sigma)(t)\,\diff t\Bigr]\\
	&\hspace{0.5cm}+\bE\Bigl[\int^T_{t_0}\Bigl\{\langle(R(t)+(D^\top\lint P\rint D)(t))u(t),u(t)\rangle\\
	&\hspace{1.5cm}+2\Bigl\langle(S(t)+(D^\top\lint P\rint C)(t))X(t)+\int^T_t(B^\top\lint P)(s,t)\Theta(s,t)\,\diff s+\rho(t)+(D^\top\lint P\rint\sigma)(t),u(t)\Bigr\rangle\\
	&\hspace{1.5cm}+\langle(-P^{(1)}(t)+Q(t)+(C^\top\lint P\rint C)(t))X(t),X(t)\rangle\\
	&\hspace{1.5cm}+2\int^T_t\langle(-P^{(2)}(s,t,t)+(P\rint A)(s,t))X(t),\Theta(s,t)\rangle\,\diff s\\
	&\hspace{1.5cm}+\int^T_t\!\!\int^T_t\langle\dot{P}^{(2)}(s_1,s_2,t)\Theta(s_2,t),\Theta(s_1,t)\rangle\,\diff s_1\!\,\diff s_2\\
	&\hspace{1.5cm}+2\langle q(t)+(C^\top\lint P\rint\sigma)(t),X(t)\rangle+2\int^T_t\langle(P\rint b)(s,t),\Theta(s,t)\rangle\,\diff s\Bigr\}\,\diff t\Bigr].
\end{split}
\end{equation}

For each $(\Xi,\Gamma,v)\in\cS(0,T)$, let $P=(P^{(1)},P^{(2)})\in\Pi(0,T)$ be the solution to the Lyapunov--Volterra equation \eqref{eq_Lyapunov--Volterra_(Xi,Gamma)} and $(\eta,\zeta)\in L^2_{\bF,\mathrm{c}}(\triangle_2(0,T);\bR^d)\times L^2_\bF(\triangle_2(0,T);\bR^d)$ be the adapted-solution to the Type-II EBSVIE \eqref{eq_Type-II_EBSVIE_(Xi,Gamma,v)}. Let $(t_0,x)\in\cI$, $\tilde{v}\in\cU(0,T)$ and $\mu\in\bR$ be given, and denote by $(\tilde{X}^{t_0,x}_1,\tilde{\Theta}^{t_0,x}_1,\tilde{u}^{t_0,x}_1)\in L^2_\bF(t_0,T;\bR^d)\times L^2_{\bF,\mathrm{c}}(\triangle_2(t_0,T);\bR^d)\times\cU(t_0,T)$ the causal feedback solution to the controlled SVIE \eqref{eq_state} at $(t_0,x)\in\cI$ corresponding to the causal feedback strategy $(\Xi,\Gamma,v+\mu \tilde{v})\in\cS(0,T)$. Considering $u(t)=(\Xi,\Gamma,v+\mu\tilde{v})[t_0,x](t)=\Xi(t)\tilde{X}^{t_0,x}_1(t)+\int^T_t\Gamma(s,t)\tilde{\Theta}^{t_0,x}_1(s,t)\,\diff s+v(t)+\mu \tilde{v}(t)$ for $t\in(t_0,T)$, by \eqref{eq_J}, we have
\begin{align*}
	&J(t_0,x;(\Xi,\Gamma,v+\mu \tilde{v})[t_0,x])\\
	&=\int^T_{t_0}\langle P^{(1)}(t)x(t),x(t)\rangle\,\diff t+\int^T_{t_0}\!\!\int^T_{t_0}\langle P^{(2)}(s_1,s_2,t_0)x(s_2),x(s_1)\rangle\,\diff s_1\!\,\diff s_2+\bE\Bigl[\int^T_{t_0}(\sigma^\top\lint P\rint\sigma)(t)\,\diff t\Bigr]\\
	&\hspace{0.5cm}+\bE\Bigl[\int^T_{t_0}\Bigl\{\Bigl\langle(R(t)+(D^\top\lint P\rint D)(t))\Bigl(\Xi(t)\tilde{X}^{t_0,x}_1(t)+\int^T_t\Gamma(s,t)\tilde{\Theta}^{t_0,x}_1(s,t)\,\diff s+v(t)+\mu \tilde{v}(t)\Bigr),\\
	&\hspace{4cm}\Xi(t)\tilde{X}^{t_0,x}_1(t)+\int^T_t\Gamma(s,t)\tilde{\Theta}^{t_0,x}_1(s,t)\,\diff s+v(t)+\mu \tilde{v}(t)\Bigr\rangle\\
	&\hspace{1.5cm}+2\Bigl\langle(S(t)+(D^\top\lint P\rint C)(t))\tilde{X}^{t_0,x}_1(t)+\int^T_t(B^\top\lint P)(s,t)\tilde{\Theta}^{t_0,x}_1(s,t)\,\diff s+\rho(t)+(D^\top\lint P\rint\sigma)(t),\\
	&\hspace{4cm}\Xi(t)\tilde{X}^{t_0,x}_1(t)+\int^T_t\Gamma(s,t)\tilde{\Theta}^{t_0,x}_1(s,t)\,\diff s+v(t)+\mu \tilde{v}(t)\Bigr\rangle\\
	&\hspace{1.5cm}+\langle(-P^{(1)}(t)+Q(t)+(C^\top\lint P\rint C)(t))\tilde{X}^{t_0,x}_1(t),\tilde{X}^{t_0,x}_1(t)\rangle\\
	&\hspace{1.5cm}+2\int^T_t\langle(-P^{(2)}(s,t,t)+(P\rint A)(s,t))\tilde{X}^{t_0,x}_1(t),\tilde{\Theta}^{t_0,x}_1(s,t)\rangle\,\diff s\\
	&\hspace{1.5cm}+\int^T_t\!\!\int^T_t\langle\dot{P}^{(2)}(s_1,s_2,t)\tilde{\Theta}^{t_0,x}_1(s_2,t),\tilde{\Theta}^{t_0,x}_1(s_1,t)\rangle\,\diff s_1\!\,\diff s_2\\
	&\hspace{1.5cm}+2\langle q(t)+(C^\top\lint P\rint\sigma)(t),\tilde{X}^{t_0,x}_1(t)\rangle+2\int^T_t\langle(P\rint b)(s,t),\tilde{\Theta}^{t_0,x}_1(s,t)\rangle\,\diff s\Bigr\}\,\diff t\Bigr]\\
	&=\int^T_{t_0}\langle P^{(1)}(t)x(t),x(t)\rangle\,\diff t+\int^T_{t_0}\!\!\int^T_{t_0}\langle P^{(2)}(s_1,s_2,t_0)x(s_2),x(s_1)\rangle\,\diff s_1\!\,\diff s_2\\
	&\hspace{0.5cm}+\bE\Bigl[\int^T_{t_0}\Bigl\{(\sigma^\top\lint P\rint\sigma)(t)+\langle(R(t)+(D^\top\lint P\rint D)(t))v(t),v(t)\rangle+2\langle\rho(t)+(D^\top\lint P\rint\sigma)(t),v(t)\rangle\Bigr\}\,\diff t\Bigr]\\
	&\hspace{0.5cm}+\mu^2\bE\Bigl[\int^T_{t_0}\langle(R(t)+(D^\top\lint P\rint D)(t))\tilde{v}(t),\tilde{v}(t)\rangle\,\diff t\Bigr]\\
	&\hspace{0.5cm}+2\mu\bE\Bigl[\int^T_{t_0}\Bigl\langle(S(t)+(D^\top\lint P\rint C)(t)+(R(t)+(D^\top\lint P\rint D)(t))\Xi(t))\tilde{X}^{t_0,x}_1(t)\\
	&\hspace{3cm}+\int^T_t((B^\top\rint P)(s,t)+(R(t)+(D^\top\lint P\rint D)(t))\Gamma(s,t))\tilde{\Theta}^{t_0,x}_1(s,t)\,\diff s\\
	&\hspace{3cm}+\rho(t)+(D^\top\lint P\rint\sigma)(t)+(R(t)+(D^\top\lint P\rint D)(t))v(t),\tilde{v}(t)\Bigr\rangle\,\diff t\Bigr]\\
	&\hspace{0.5cm}+I_1+2I_2,
\end{align*}
where
\begin{align*}
	I_1&:=\bE\Bigl[\int^T_{t_0}\Bigl\{\langle(-P^{(1)}(t)+F^{(1)}[\Xi;P](t)+Q^{(1)}[\Xi](t))\tilde{X}^{t_0,x}_1(t),\tilde{X}^{t_0,x}_1(t)\rangle\\
	&\hspace{0.5cm}+2\int^T_t\langle(-P^{(2)}(s,t,t)+F^{(2)}[\Xi,\Gamma;P](s,t)+Q^{(2)}[\Xi,\Gamma](s,t))\tilde{X}^{t_0,x}_1(t),\tilde{\Theta}^{t_0,x}_1(s,t)\rangle\,\diff s\\
	&\hspace{0.5cm}+\int^T_t\!\!\int^T_t\langle(\dot{P}^{(2)}(s_1,s_2,t)+F^{(3)}[\Gamma;P](s_1,s_2,t)+Q^{(3)}[\Gamma](s_1,s_2,t))\tilde{\Theta}^{t_0,x}_1(s_2,t),\tilde{\Theta}^{t_0,x}_1(s_1,t)\rangle\,\diff s_1\!\,\diff s_2\Bigr\}\,\diff t\Bigr]
\end{align*}
and
\begin{align*}
	I_2&:=\bE\Bigl[\int^T_{t_0}\Bigl\{\langle q(t)+(C^\top\lint P\rint\sigma)(t)+\Xi(t)^\top(\rho(t)+(D^\top\lint P\rint\sigma)(t))\\
	&\hspace{3cm}+(S(t)^\top+(C^\top\lint P\rint D)(t)+\Xi(t)^\top(R(t)+(D^\top\lint P\rint D)(t)))v(t),\tilde{X}^{t_0,x}_1(t)\rangle\\
	&\hspace{2cm}+\int^T_t\langle(P\rint b)(s,t)+\Gamma(s,t)^\top(\rho(t)+(D^\top\lint P\rint\sigma)(t))\\
	&\hspace{3cm}+((P\rint B)(s,t)+\Gamma(s,t)^\top(R(t)+(D^\top\lint P\rint D)(t)))v(t),\tilde{\Theta}^{t_0,x}_1(s,t)\rangle\,\diff s\Bigr\}\,\diff t\Bigr].
\end{align*}
Since $P=(P^{(1)},P^{(2)})\in\Pi(0,T)$ solves the Lyapunov--Volterra equation \eqref{eq_Lyapunov--Volterra_(Xi,Gamma)}, we have $I_1=0$. Furthermore, since $(\eta,\zeta)\in L^2_{\bF,\mathrm{c}}(\triangle_2(0,T);\bR^d)\times L^2_\bF(\triangle_2(0,T);\bR^d)$ is the adapted solution to the Type-II EBSVIE \eqref{eq_Type-II_EBSVIE_(Xi,Gamma,v)}, the duality principle (\cref{theo_Type-II_EBSVIE}) yields that
\begin{align*}
	I_2&=\int^T_{t_0}\langle\bE[\eta(t,t_0)],x(t)\rangle\,\diff t+\bE\Bigl[\int^T_{t_0}\Bigl\{\int^T_t\langle\eta(s,t),b(s,t)\rangle\,\diff s+\int^T_t\langle\zeta(s,t),\sigma(s,t)\rangle\,\diff s\Bigr\}\,\diff t\Bigr]\\
	&\hspace{0.5cm}+\bE\Bigl[\int^T_{t_0}\Bigl\langle\int^T_tB(s,t)^\top\eta(s,t)\,\diff s+\int^T_tD(s,t)^\top\zeta(s,t)\,\diff s,v(t)\Bigr\rangle\,\diff t\Bigr]\\
	&\hspace{0.5cm}+\mu\bE\Bigl[\int^T_{t_0}\Bigl\langle\int^T_tB(s,t)^\top\eta(s,t)\,\diff s+\int^T_tD(s,t)^\top\zeta(s,t)\,\diff s,\tilde{v}(t)\Bigr\rangle\,\diff t\Bigr].
\end{align*}
Thus, we have
\begin{align*}
	&J(t_0,x;(\Xi,\Gamma,v+\mu \tilde{v})[t_0,x])\\
	&=\int^T_{t_0}\langle P^{(1)}(t)x(t),x(t)\rangle\,\diff t+\int^T_{t_0}\!\!\int^T_{t_0}\langle P^{(2)}(s_1,s_2,t_0)x(s_2),x(s_1)\rangle\,\diff s_1\!\,\diff s_2+2\int^T_{t_0}\langle\bE[\eta(t,t_0)],x(t)\rangle\,\diff t\\
	&\hspace{0.5cm}+\bE\Bigl[\int^T_{t_0}\Bigl\{(\sigma^\top\lint P\rint\sigma)(t)+2\int^T_t\langle\eta(s,t),b(s,t)\rangle\,\diff s+2\int^T_t\langle\zeta(s,t),\sigma(s,t)\rangle\,\diff s\\
	&\hspace{2cm}+\langle(R(t)+(D^\top\lint P\rint D)(t))v(t),v(t)\rangle+2\langle\kappa(t),v(t)\rangle\Bigr\}\,\diff t\Bigr]\\
	&\hspace{0.5cm}+\mu^2\bE\Bigl[\int^T_{t_0}\langle(R(t)+(D^\top\lint P\rint D)(t))\tilde{v}(t),\tilde{v}(t)\rangle\,\diff t\Bigr]\\
	&\hspace{0.5cm}+2\mu\bE\Bigl[\int^T_{t_0}\Bigl\langle(S(t)+(D^\top\lint P\rint C)(t)+(R(t)+(D^\top\lint P\rint D)(t))\Xi(t))\tilde{X}^{t_0,x}_1(t)\\
	&\hspace{3cm}+\int^T_t((B^\top\rint P)(s,t)+(R(t)+(D^\top\lint P\rint D)(t))\Gamma(s,t))\tilde{\Theta}^{t_0,x}_1(s,t)\,\diff s\\
	&\hspace{3cm}+\kappa(t)+(R(t)+(D^\top\lint P\rint D)(t))v(t),\tilde{v}(t)\Bigr\rangle\,\diff t\Bigr],
\end{align*}
where
\begin{equation*}
	\kappa(t):=\rho(t)\mathalpha{+}(D^\top\lint P\rint\sigma)(t)\mathalpha{+}\int^T_tB(s,t)^\top\eta(s,t)\,\diff s\mathalpha{+}\int^T_tD(s,t)^\top\zeta(s,t)\,\diff s,\ t\in(0,T).
\end{equation*}
By the uniqueness of the causal feedback solution to the controlled SVIE, we see that $\tilde{X}^{t_0,x}_1=X^{t_0,x}+\mu \tilde{X}^{t_0,0}_0$ and $\tilde{\Theta}^{t_0,x}_1=\Theta^{t_0,x}+\mu\tilde{\Theta}^{t_0,0}_0$, where $(X^{t_0,x},\Theta^{t_0,x})$ is the causal feedback solution to the controlled SVIE \eqref{eq_state_0} at $(t_0,x)\in\cI$ corresponding to $(\Xi,\Gamma,v)\in\cS(0,T)$, and $(\tilde{X}^{t_0,0}_0,\tilde{\Theta}^{t_0,0}_0)$ is the causal feedback solution to the homogeneous controlled SVIE \eqref{eq_state_0} at $(t_0,0)\in\cI$ corresponding to $(\Xi,\Gamma,\tilde{v})\in\cS(0,T)$. Inserting these formulae into the above expression, we get \eqref{eq_representation_cost}. This completes the proof.
\end{proof}


\begin{rem}\label{rem_expression_cost_functional}
\cref{theo_representation_cost} provides us useful expressions of the cost functionals. With the notations in the above theorem, for any $(t_0,x)\in\cI$ and $\tilde{v}\in\cU(t_0,T)$, the following hold:
\begin{align*}
	&J(t_0,x;(\Xi,\Gamma,v)[t_0,x])\\
	&=\int^T_{t_0}\langle P^{(1)}(t)x(t),x(t)\rangle\,\diff t+\int^T_{t_0}\!\!\int^T_{t_0}\langle P^{(2)}(s_1,s_2,t_0)x(s_2),x(s_1)\rangle\,\diff s_1\!\,\diff s_2+2\int^T_{t_0}\langle\bE[\eta(t,t_0)],x(t)\rangle\,\diff t\\
	&\hspace{0.5cm}+\bE\Bigl[\int^T_{t_0}\Bigl\{(\sigma^\top\lint P\rint\sigma)(t)+2\int^T_t\langle\eta(s,t),b(s,t)\rangle\,\diff s+2\int^T_t\langle\zeta(s,t),\sigma(s,t)\rangle\,\diff s\\
	&\hspace{2cm}+\langle(R(t)+(D^\top\lint P\rint D)(t))v(t),v(t)\rangle+2\langle\kappa(t),v(t)\rangle\Bigr\}\,\diff t\Bigr],\\
	&J^0(t_0,0;(\Xi,\Gamma,\tilde{v})^0[t_0,0])\\
	&=\bE\Bigl[\int^T_{t_0}\Bigl\{\langle(R(t)+(D^\top\lint P\rint D)(t))\tilde{v}(t),\tilde{v}(t)\rangle\\
	&\hspace{2cm}+2\Bigl\langle(S(t)+(D^\top\lint P\rint C)(t)+(R(t)+(D^\top\lint P\rint D)(t))\Xi(t))\tilde{X}^{t_0,0}_0(t)\\
	&\hspace{3cm}+\int^T_t((B^\top\rint P)(s,t)+(R(t)+(D^\top\lint P\rint D)(t))\Gamma(s,t))\tilde{\Theta}^{t_0,0}_0(s,t)\,\diff s,\tilde{v}(t)\Bigr\rangle\Bigr\}\,\diff t\Bigr],\\
	&\cD_v J(t_0,x;(\Xi,\Gamma,v)[t_0,x])(t)\\
	&=2\Bigl\{(S(t)+(D^\top\lint P\rint C)(t)+(R(t)+(D^\top\lint P\rint D)(t))\Xi(t))X^{t_0,x}(t)\\
	&\hspace{1cm}+\int^T_t((B^\top\rint P)(s,t)+(R(t)+(D^\top\lint P\rint D)(t))\Gamma(s,t))\Theta^{t_0,x}(s,t)\,\diff s\\
	&\hspace{1cm}+\kappa(t)+(R(t)+(D^\top\lint P\rint D)(t))v(t)\Bigr\},\ t\in(t_0,T),
\end{align*}
where $\cD_v J(t_0,x;(\Xi,\Gamma,v)[t_0,x])\in\cU(t_0,T)$ denotes the Fr\'{e}chet derivative of the functional $\cU(t_0,T)\ni v\mapsto J(t_0,x;(\Xi,\Gamma,v)[t_0,x])\in\bR$ evaluated at $v\in\cU(t_0,T)$. Furthermore, we obtain the following formula:
\begin{equation}\label{eq_cost_expansion}
\begin{split}
	&J(t_0,x;(\Xi,\Gamma,v+\mu \tilde{v})[t_0,x])\\
	&=J(t_0,x;(\Xi,\Gamma,v)[t_0,x])+\mu^2J^0(t_0,0;(\Xi,\Gamma,\tilde{v})^0[t_0,0])+2\mu\langle\cD_v J(t_0,x;(\Xi,\Gamma,v)[t_0,x]),\tilde{v}\rangle_{\cU(t_0,T)},\ \forall\,\mu\in\bR,
\end{split}
\end{equation}
where $\langle\cdot,\cdot\rangle_{\cU(t_0,T)}$ denotes the inner product in the Hilbert space $\cU(t_0,T)=L^2_\bF(t_0,T;\bR^\ell)$.
\end{rem}


\begin{rem}
By the definitions \eqref{eq_Lyapunov--Volterra_coefficients} and \eqref{eq_Lyapunov--Volterra_inhomogeneous} of the coefficient $(F^{(1)}[\Xi;P],F^{(2)}[\Xi,\Gamma;P],F^{(3)}[\Gamma;P])$ and the inhomogeneous term $(Q^{(1)}[\Xi],Q^{(2)}[\Xi,\Gamma],Q^{(3)}[\Gamma])$, respectively, the Lyapunov--Volterra equation \eqref{eq_Lyapunov--Volterra_(Xi,Gamma)} is written as
\begin{equation*}
	\begin{dcases}
	P^{(1)}(t)=Q(t)+(C^\top\lint P\rint C)(t)+\Xi(t)^\top(S(t)+(D^\top\lint P\rint C)(t))+(S(t)^\top+(C^\top\lint P\rint D)(t))\Xi(t)\\
	\hspace{2cm}+\Xi(t)^\top(R(t)+(D^\top\lint P\rint D)(t))\Xi(t),\ t\in(0,T),\\
	P^{(2)}(s,t,t)=P^{(2)}(t,s,t)^\top=(P\rint A)(s,t)+(P\rint B)(s,t)\Xi(t)+\Gamma(s,t)^\top(S(t)+(D^\top\lint P\rint C)(t))\\
	\hspace{5cm}+\Gamma(s,t)^\top(R(t)+(D^\top\lint P\rint D)(t))\Xi(t),\ (s,t)\in\triangle_2(0,T),\\
	\dot{P}^{(2)}(s_1,s_2,t)+\Gamma(s_1,t)^\top(B^\top\lint P)(s_2,t)+(P\rint B)(s_1,t)\Gamma(s_2,t)\\
	\hspace{1cm}+\Gamma(s_1,t)^\top(R(t)+(D^\top\lint P\rint D)(t))\Gamma(s_2,t)=0,\ (s_1,s_2,t)\in\square_3(0,T).
	\end{dcases}
\end{equation*}
Noting \cref{defi_lint_rint}, the above can be also written in the integral form.
\end{rem}


\section{Optimal strategies and Riccati--Volterra equations}\label{section_optimality}

In this section, we characterize causal feedback optimal strategies of Problem (SVC) by means of a Riccati-type equation. We introduce the following equation (depending only on the coefficients $A,B,C,D,Q,R$ and $S$):
\begin{equation}\label{eq_Riccati--Volterra}
	\begin{dcases}
	P^{(1)}(t)=Q(t)+(C^\top\lint P\rint C)(t)\\
	\hspace{1.5cm}-(S(t)^\top+(C^\top\lint P\rint D)(t))(R(t)+(D^\top\lint P\rint D)(t))^\dagger(S(t)+(D^\top\lint P\rint C)(t)),\ t\in(0,T),\\
	P^{(2)}(s,t,t)=P^{(2)}(t,s,t)^\top\\
	=(P\rint A)(s,t)-(P\rint B)(s,t)(R(t)+(D^\top\lint P\rint D)(t))^\dagger(S(t)+(D^\top\lint P\rint C)(t)),\ (s,t)\in\triangle_2(0,T),\\
	\dot{P}^{(2)}(s_1,s_2,t)=(P\rint B)(s_1,t)(R(t)+(D^\top\lint P\rint D)(t))^\dagger(B^\top\lint P)(s_2,t),\ (s_1,s_2,t)\in\square_3(0,T),
	\end{dcases}
\end{equation}
where, for each matrix $M$, $M^\dagger$ denotes the Moore--Penrose pseudoinverse (see \cite[Appendix A]{SuYo20}). Noting \cref{defi_lint_rint}, the above can be written in the integral form. This is a coupled system of Riccati-type (backward) Volterra integro-differential equations for the pair $P=(P^{(1)},P^{(2)})$ of matrix-valued deterministic functions, and we call it a \emph{Riccati--Volterra equation}. By a solution to the above Riccati--Volterra equation, we mean a pair $P=(P^{(1)},P^{(2)})\in\Pi(0,T)$ satisfying \eqref{eq_Riccati--Volterra}. Similarly to the study on LQ control problems for SDEs \cite{SuYo20}, we introduce the notions of the regular and the strongly regular solutions to the Riccati--Volterra equation \eqref{eq_Riccati--Volterra}.


\begin{defi}
Let $P=(P^{(1)},P^{(2)})\in\Pi(0,T)$ be a solution to the Riccati--Volterra equation \eqref{eq_Riccati--Volterra}. We say that it is \emph{regular} if
\begin{itemize}
\item[(i)]
$R(t)+(D^\top\lint P\rint D)(t)\geq0$ for a.e.\ $t\in(0,T)$;
\item[(ii)]
$\sR(S(t)+(D^\top\lint P\rint C)(t))\subset\sR(R(t)+(D^\top\lint P\rint D)(t))$ for a.e.\ $t\in(0,T)$, and the function
\begin{equation}\label{eq_check_Xi}
	\check{\Xi}(t):=-(R(t)+(D^\top\lint P\rint D)(t))^\dagger(S(t)+(D^\top\lint P\rint C)(t)),\ t\in(0,T),
\end{equation}
is in $L^\infty(0,T;\bR^{\ell\times d})$;
\item[(iii)]
$\sR((B^\top\lint P)(s,t))\subset\sR(R(t)+(D^\top\lint P\rint D)(t))$ for a.e.\ $(s,t)\in\triangle_2(0,T)$, and the function
\begin{equation}\label{eq_check_Gamma}
	\check{\Gamma}(s,t):=-(R(t)+(D^\top\lint P\rint D)(t))^\dagger(B^\top\lint P)(s,t),\ (s,t)\in\triangle_2(0,T),
\end{equation}
is in $L^2(\triangle_2(0,T);\bR^{\ell\times d})$.
\end{itemize}
Furthermore, we say that the solution $P$ is \emph{strongly regular} if there exists a constant $\lambda>0$ such that
\begin{equation*}
	R(t)+(D^\top\lint P\rint D)(t)\geq\lambda I_\ell
\end{equation*}
for a.e.\ $t\in(0,T)$.
\end{defi}


\begin{rem}
Noting that $S+D^\top\lint P\rint C\in L^\infty(0,T;\bR^{\ell\times d})$ and $B^\top\lint P\in L^2(\triangle_2(0,T);\bR^{\ell\times d})$ for any $P\in\Pi(0,T)$, we see that a strongly regular solution is a regular solution.
\end{rem}

The following is the main theorem of this section.


\begin{theo}\label{theo_optimality}
Problem (SVC) has a causal feedback optimal strategy if and only if the following two conditions hold:
\begin{itemize}
\item[(i)]
The Riccati--Volterra equation \eqref{eq_Riccati--Volterra} admits a regular solution $P=(P^{(1)},P^{(2)})\in\Pi(0,T)$.
\item[(ii)]
Define $(\check{\Xi},\check{\Gamma})\in L^\infty(0,T;\bR^{\ell\times d})\times L^2(\triangle_2(0,T);\bR^{\ell\times d})$ by \eqref{eq_check_Xi} and \eqref{eq_check_Gamma}, and let $(\eta,\zeta)\in L^2_{\bF,\mathrm{c}}(\triangle_2(0,T);\bR^d)\times L^2_\bF(\triangle_2(0,T);\bR^d)$ be the adapted solution to the Type-II EBSVIE
\begin{equation}\label{eq_Type-II_EBSVIE+}
	\begin{dcases}
	\mathrm{d}\eta(t,s)=-\Bigl\{(P\rint b)(t,s)+\check{\Gamma}(t,s)^\top(\rho(s)+(D^\top\lint P\rint\sigma)(s))\\
	\hspace{3cm}+\check{\Gamma}(t,s)^\top\int^T_sB(r,s)^\top\eta(r,s)\,\diff r+\check{\Gamma}(t,s)^\top\int^T_sD(r,s)^\top\zeta(r,s)\,\diff r\Bigr\}\,\diff s\\
	\hspace{2cm}+\zeta(t,s)\,\diff W(s),\ (t,s)\in\triangle_2(0,T),\\
	\eta(t,t)=q(t)+(C^\top\lint P\rint\sigma)(t)+\check{\Xi}(t)^\top(\rho(t)+(D^\top\lint P\rint\sigma)(t))\\
	\hspace{2cm}+\int^T_t(A+B\triangleright\check{\Xi})(r,t)^\top\eta(r,t)\,\diff r+\int^T_t(C+D\triangleright\check{\Xi})(r,t)^\top\zeta(r,t)\,\diff r,\ t\in(0,T).
	\end{dcases}	
\end{equation}
Then the process $\kappa\in L^2_\bF(0,T;\bR^\ell)$ defined by
\begin{equation*}
	\kappa(t):=\rho(t)\mathalpha{+}(D^\top\lint P\rint\sigma)(t)\mathalpha{+}\int^T_tB(s,t)^\top\eta(s,t)\,\diff s\mathalpha{+}\int^T_tD(s,t)^\top\zeta(s,t)\,\diff s,\ t\in(0,T),
\end{equation*}
satisfies
\begin{equation*}
	\kappa(t)\in\sR(R(t)\mathalpha{+}(D^\top\lint P\rint D)(t))
\end{equation*}
for a.e.\ $t\in(0,T)$, a.s., and the process $\check{v}$ defined by
\begin{equation}\label{eq_check_v}
	\check{v}(t):=-(R(t)\mathalpha{+}(D^\top\lint P\rint D)(t))^\dagger\kappa(t),\ t\in(0,T),
\end{equation}
is in $\cU(0,T)=L^2_\bF(0,T;\bR^\ell)$.
\end{itemize}
In this case, any causal feedback optimal strategy $(\hat{\Xi},\hat{\Gamma},\hat{v})\in\cS(0,T)$ of Problem (SVC) admits the following representation:
\begin{align*}
	&\hat{\Xi}(t)=\check{\Xi}(t)+(I_\ell\mathalpha{-}(R(t)\mathalpha{+}(D^\top\lint P\rint D)(t))^\dagger(R(t)\mathalpha{+}(D^\top\lint P\rint D)(t)))\tilde{\Xi}(t),\ t\in(0,T),\\
	&\hat{\Gamma}(s,t)=\check{\Gamma}(s,t)+(I_\ell\mathalpha{-}(R(t)\mathalpha{+}(D^\top\lint P\rint D)(t))^\dagger(R(t)\mathalpha{+}(D^\top\lint P\rint D)(t)))\tilde{\Gamma}(s,t),\ (s,t)\in\triangle_2(0,T),\\
	&\hat{v}(t)=\check{v}(t)+(I_\ell\mathalpha{-}(R(t)\mathalpha{+}(D^\top\lint P\rint D)(t))^\dagger(R(t)\mathalpha{+}(D^\top\lint P\rint D)(t)))\tilde{v}(t),\ t\in(0,T),
\end{align*}
with $(\tilde{\Xi},\tilde{\Gamma},\tilde{v})\in\cS(0,T)$ being arbitrary. Furthermore, the value functional is given by
\begin{align*}
	V(t_0,x)&=\int^T_{t_0}\langle P^{(1)}(t)x(t),x(t)\rangle\,\diff t+\int^T_{t_0}\!\!\int^T_{t_0}\langle P^{(2)}(t_1,t_2,t_0)x(t_2),x(t_1)\rangle\,\diff t_1\!\,\diff t_2+2\int^T_{t_0}\langle\bE[\eta(t,t_0)],x(t)\rangle\,\diff t\\
	&\hspace{0.5cm}+\bE\Bigl[\int^T_{t_0}\Bigl\{(\sigma^\top\lint P\rint\sigma)(t)+2\int^T_t\langle\eta(s,t),b(s,t)\rangle\,\diff s+2\int^T_t\langle\zeta(s,t),\sigma(s,t)\rangle\,\diff s\\
	&\hspace{2cm}-\langle(R(t)\mathalpha{+}(D^\top\lint P\rint D)(t))^\dagger\kappa(t),\kappa(t)\rangle\Bigr\}\,\diff t\Bigr]
\end{align*}
for each $(t_0,x)\in\cI$.
\end{theo}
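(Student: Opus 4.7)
The plan is to reduce the optimality to first-order and second-order conditions via the cost expansion formula of \cref{rem_expression_cost_functional}. By the equivalence (i)$\Leftrightarrow$(iv) of \cref{lemm_causal feedback_equivalence}, $(\hat\Xi,\hat\Gamma,\hat v)\in\cS(0,T)$ is optimal if and only if, for each fixed $(t_0,x)\in\cI$, the map $v\mapsto J(t_0,x;(\hat\Xi,\hat\Gamma,v)[t_0,x])$ attains its minimum at $\hat v$. Writing $v=\hat v+\mu\tilde v$ and invoking formula \eqref{eq_cost_expansion}, this is equivalent to the first-order condition $\cD_vJ(t_0,x;(\hat\Xi,\hat\Gamma,\hat v)[t_0,x])=0$ in $\cU(t_0,T)$ for every $(t_0,x)\in\cI$, together with the second-order condition $J^0(t_0,0;(\hat\Xi,\hat\Gamma,\tilde v)^0[t_0,0])\geq 0$ for every $(t_0,\tilde v)$. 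Both conditions can be rewritten explicitly using the solution $\hat P\in\Pi(0,T)$ of the Lyapunov--Volterra equation \eqref{eq_Lyapunov--Volterra_(Xi,Gamma)} associated with $(\hat\Xi,\hat\Gamma)$ and the adapted solution $(\hat\eta,\hat\zeta)$ of the Type-II EBSVIE \eqref{eq_Type-II_EBSVIE_(Xi,Gamma,v)} associated with $(\hat\Xi,\hat\Gamma,\hat v)$.

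For sufficiency, the candidate is $(\hat\Xi,\hat\Gamma,\hat v)=(\check\Xi,\check\Gamma,\check v)$. Writing $M(t):=R(t)+(D^\top\lint P\rint D)(t)$, the range hypotheses combined with the identity $MM^\dagger M=M$ force the three pointwise relations $M\check\Xi+S+D^\top\lint P\rint C=0$, $M\check\Gamma+B^\top\lint P=0$ and $M\check v+\kappa=0$. The first two collapse the Lyapunov--Volterra equation \eqref{eq_Lyapunov--Volterra_(Xi,Gamma)} for $(\check\Xi,\check\Gamma)$ into the Riccati--Volterra equation \eqref{eq_Riccati--Volterra}, so the given $P$ is its unique solution in $\Pi(0,T)$, and they collapse the Type-II EBSVIE \eqref{eq_Type-II_EBSVIE_(Xi,Gamma,v)} (with $v=\check v$) into \eqref{eq_Type-II_EBSVIE+} (which has no $v$-dependence), so that the pair $(\eta,\zeta)$ from assumption (ii) is precisely its adapted solution. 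The third identity, inserted into the representation of $\cD_vJ$ in \cref{rem_expression_cost_functional}, gives the first-order condition; while the same reduction applied to the representation of $J^0$ makes its cross terms vanish and leaves $J^0(t_0,0;(\check\Xi,\check\Gamma,\tilde v)^0[t_0,0])=\bE[\int_{t_0}^T\langle M(t)\tilde v(t),\tilde v(t)\rangle\,\diff t]\geq 0$, giving the second-order condition.

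For necessity, assuming $(\hat\Xi,\hat\Gamma,\hat v)$ is optimal, the explicit formula for $\cD_vJ$ in \cref{rem_expression_cost_functional} rewrites the first-order condition as the pointwise identity
\begin{equation*}
\bigl(S+D^\top\lint\hat P\rint C+M\hat\Xi\bigr)(t)X^{t_0,x}(t)+\int_t^T\!\bigl((B^\top\rint\hat P)(s,t)+M(t)\hat\Gamma(s,t)\bigr)\Theta^{t_0,x}(s,t)\,\diff s+\hat\kappa(t)+M(t)\hat v(t)=0
\end{equation*}
for a.e.\ $t$, a.s., and for every $(t_0,x)\in\cI$. Exploiting the freedom of $x\in L^2(t_0,T;\bR^d)$ and $t_0\in[0,T)$ via an argument analogous to \cref{lemm_Pi_0}, the three groupings on the left-hand side (the one proportional to $X^{t_0,x}$, the one proportional to $\Theta^{t_0,x}$, and the inhomogeneous remainder) must vanish separately, producing $M\hat\Xi+S+D^\top\lint\hat P\rint C=0$, $M\hat\Gamma+B^\top\lint\hat P=0$ and $M\hat v+\hat\kappa=0$. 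Substituting the first two back into the Lyapunov--Volterra equation reduces it to the Riccati--Volterra equation, so that $\hat P$ is a solution of the latter; the three identities simultaneously yield the three range conditions in the definition of a regular solution, and the general form $x=-M^\dagger y+(I_\ell-M^\dagger M)\tilde x$ of the solutions of $Mx+y=0$ gives the stated parametrization of $(\hat\Xi,\hat\Gamma,\hat v)$ in terms of $(\check\Xi,\check\Gamma,\check v)$. Substituting the first two identities into the expression for $J^0$ in \cref{rem_expression_cost_functional} kills its cross terms, so the second-order condition becomes $\bE[\int_{t_0}^T\langle M(t)\tilde v(t),\tilde v(t)\rangle\,\diff t]\geq 0$ for every $(t_0,\tilde v)$, forcing $M\geq 0$ a.e.\ and completing the regularity of $\hat P$. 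Finally, evaluating the cost representation of \cref{rem_expression_cost_functional} at $(\check\Xi,\check\Gamma,\check v)$ and using $MM^\dagger\kappa=\kappa$ with the symmetry of $M$ to rewrite $\langle Mv,v\rangle+2\langle\kappa,v\rangle$ as $-\langle M^\dagger\kappa,\kappa\rangle$ yields the stated formula for $V(t_0,x)$.

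The main obstacle is the separation step in the necessity direction: extracting three distinct pointwise identities from a single identity that holds for every $(t_0,x)\in\cI$ requires a careful exploitation of the available degrees of freedom, in the spirit of but more delicate than \cref{lemm_Pi_0}, since one has to disentangle simultaneously the $X^{t_0,x}$-component (a function of $t$), the $\Theta^{t_0,x}$-component (a function of $(s,t)$), and an adapted inhomogeneous remainder. Once this separation is in hand, the remainder of the argument is routine algebraic manipulation with the operators $\lint,\rint$, the Moore--Penrose identity $MM^\dagger M=M$, and the cost expansion \eqref{eq_cost_expansion}.
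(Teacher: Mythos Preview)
Your proposal is correct and follows essentially the same approach as the paper: reduce optimality to first- and second-order conditions via \cref{lemm_causal feedback_equivalence} and the cost expansion \eqref{eq_cost_expansion}, then translate these into the three pointwise identities $M\hat\Xi+S+D^\top\lint\hat P\rint C=0$, $M\hat\Gamma+B^\top\lint\hat P=0$, $M\hat v+\hat\kappa=0$, which simultaneously collapse the Lyapunov--Volterra equation into the Riccati--Volterra equation and the Type-II EBSVIE \eqref{eq_Type-II_EBSVIE_(Xi,Gamma,v)} into \eqref{eq_Type-II_EBSVIE+}.

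The one place where the paper is more explicit than your sketch is precisely the separation step you flag as the main obstacle. The paper does not argue ``analogously to \cref{lemm_Pi_0}'' but instead first subtracts the $x=0$ case from the first-order identity (so that the inhomogeneous remainder $\hat\kappa+M\hat v$ drops out and the state pair becomes the causal feedback solution of the \emph{homogeneous} SVIE for $(\hat\Xi,\hat\Gamma,0)$), then takes expectations to obtain a deterministic identity, and finally invokes the dedicated \cref{lemm_MN=0}. That lemma, proved in the appendix via explicit variation-of-constants formulae for $\bE[X^{t_0,x}_0]$ and $\bE[\Theta^{t_0,x}_0]$ together with Lebesgue differentiation against localized test inputs, is exactly the tool that disentangles the $X$-coefficient from the $\Theta$-coefficient. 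Once those two vanish, the third identity $M\hat v+\hat\kappa=0$ follows immediately by reading off the original (unsubtracted) first-order condition. In necessity you should also note, as the paper does, that inserting the parametrization of $(\hat\Xi,\hat\Gamma,\hat v)$ back into \eqref{eq_Type-II_EBSVIE_(Xi,Gamma,v)} shows that the associated $(\hat\eta,\hat\zeta)$ coincides with the solution of \eqref{eq_Type-II_EBSVIE+}, so that your $\hat\kappa$ really is the $\kappa$ of condition~(ii).
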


\begin{proof}
\underline{The sufficiency}: Assume that the conditions (i) and (ii) in the theorem hold, and define $(\check{\Xi},\check{\Gamma},\check{v})\in\cS(0,T)$ by \eqref{eq_check_Xi}, \eqref{eq_check_Gamma} and \eqref{eq_check_v}. Furthermore, define $(\hat{\Xi},\hat{\Gamma},\hat{v})\in\cS(0,T)$ by
\begin{align*}
	&\hat{\Xi}(t)=\check{\Xi}(t)+(I_\ell\mathalpha{-}(R(t)\mathalpha{+}(D^\top\lint P\rint D)(t))^\dagger(R(t)\mathalpha{+}(D^\top\lint P\rint D)(t)))\tilde{\Xi}(t),\ t\in(0,T),\\
	&\hat{\Gamma}(s,t)=\check{\Gamma}(s,t)+(I_\ell\mathalpha{-}(R(t)\mathalpha{+}(D^\top\lint P\rint D)(t))^\dagger(R(t)\mathalpha{+}(D^\top\lint P\rint D)(t)))\tilde{\Gamma}(s,t),\ (s,t)\in\triangle_2(0,T),\\
	&\hat{v}(t)=\check{v}(t)+(I_\ell\mathalpha{-}(R(t)\mathalpha{+}(D^\top\lint P\rint D)(t))^\dagger(R(t)\mathalpha{+}(D^\top\lint P\rint D)(t)))\tilde{v}(t),\ t\in(0,T),
\end{align*}
with $(\tilde{\Xi},\tilde{\Gamma},\tilde{v})\in\cS(0,T)$ being arbitrary. We show that $(\hat{\Xi},\hat{\Gamma},\hat{v})$ is a causal feedback optimal strategy of Problem (SVC). By the constructions and a fundamental calculus of the Moore--Penrose pseudoinverse (see \cite[Proposition A.15]{SuYo20} and its proof), we have
\begin{align*}
	&S(t)+(D^\top\lint P\rint C)(t)+(R(t)+(D^\top\lint P\rint D)(t))\hat{\Xi}(t)=0\ \text{for a.e.\ $t\in(0,T)$},\\
	&(B^\top\lint P)(s,t)+(R(t)+(D^\top\lint P\rint D)(t))\hat{\Gamma}(s,t)=0\ \text{for a.e.\ $(s,t)\in\triangle_2(0,T)$},\\
	&\kappa(t)+(R(t)+(D^\top\lint P\rint D)(t))\hat{v}(t)=0\ \text{for a.e.\ $t\in(0,T)$, a.s.}
\end{align*}
Inserting the above formulae into the Riccati--Volterra equation \eqref{eq_Riccati--Volterra} and the Type-II EBSVIE \eqref{eq_Type-II_EBSVIE+}, we see that
\begin{itemize}
\item
$P=(P^{(1)},P^{(2)})\in\Pi(0,T)$ solves the Lyapunov--Volterra equation \eqref{eq_Lyapunov--Volterra_(Xi,Gamma)} with $(\Xi,\Gamma)=(\hat{\Xi},\hat{\Gamma})$;
\item
$(\eta,\zeta)\in L^2_{\bF,\mathrm{c}}(\triangle_2(0,T);\bR^d)\times L^2_\bF(\triangle_2(0,T);\bR^d)$ solves the Type-II EBSVIE \eqref{eq_Type-II_EBSVIE_(Xi,Gamma,v)} with $(\Xi,\Gamma,v)=(\hat{\Xi},\hat{\Gamma},\hat{v})$.
\end{itemize}
Furthermore, we have
\begin{equation*}
	\langle(R(t)+(D^\top\lint P\rint D)(t))\hat{v}(t),\hat{v}(t)\rangle+2\langle\kappa(t),\hat{v}(t)\rangle=-\langle(R(t)+(D^\top\lint P\rint D)(t))^\dagger\kappa(t),\kappa(t)\rangle
\end{equation*}
for a.e.\ $t\in(0,T)$, a.s. Thus, by \cref{theo_representation_cost}, for any $(t_0,x)\in\cI$, $\tilde{v}\in\cU(0,T)$ and $\mu\in\bR$, we have
\begin{align*}
	&J(t_0,x;(\hat{\Xi},\hat{\Gamma},\hat{v}+\mu \tilde{v})[t_0,x])\\
	&=\int^T_{t_0}\langle P^{(1)}(t)x(t),x(t)\rangle\,\diff t+\int^T_{t_0}\!\!\int^T_{t_0}\langle P^{(2)}(t_1,t_2,t_0)x(t_2),x(t_1)\rangle\,\diff t_1\!\,\diff t_2+2\int^T_{t_0}\langle\bE[\eta(t,t_0)],x(t)\rangle\,\diff t\\
	&\hspace{0.5cm}+\bE\Bigl[\int^T_{t_0}\Bigl\{(\sigma^\top\lint P\rint\sigma)(t)+2\int^T_t\langle\eta(s,t),b(s,t)\rangle\,\diff s+2\int^T_t\langle\zeta(s,t),\sigma(s,t)\rangle\,\diff s\\
	&\hspace{2cm}-\langle(R(t)\mathalpha{+}(D^\top\lint P\rint D)(t))^\dagger\kappa(t),\kappa(t)\rangle\Bigr\}\,\diff t\Bigr]\\
	&\hspace{0.5cm}+\mu^2\bE\Bigl[\int^T_{t_0}\langle(R(t)+(D^\top\lint P\rint D)(t))\tilde{v}(t),\tilde{v}(t)\rangle\,\diff t\Bigr].
\end{align*}
Since $R(t)+(D^\top\lint P\rint D)(t)\geq0$ for a.e.\ $t\in(0,T)$, we see that
\begin{equation*}
	J(t_0,x;(\hat{\Xi},\hat{\Gamma},\hat{v}+\mu \tilde{v})[t_0,x])\geq J(t_0,x;(\hat{\Xi},\hat{\Gamma},\hat{v})[t_0,x])
\end{equation*}
for any $(t_0,x)\in\cI$, $\tilde{v}\in\cU(0,T)$ and $\mu\in\bR$. This implies that
\begin{equation*}
	J(t_0,x;(\hat{\Xi},\hat{\Gamma},v)[t_0,x])\geq J(t_0,x;(\hat{\Xi},\hat{\Gamma},\hat{v})[t_0,x])
\end{equation*}
for any $(t_0,x)\in\cI$ and $v\in\cU(0,T)$. By \cref{lemm_causal feedback_equivalence}, we see that $(\hat{\Xi},\hat{\Gamma},\hat{v})$ is a causal feedback optimal strategy of Problem (SVC). Furthermore, again by \cref{lemm_causal feedback_equivalence}, we obtain the expression of the value functional:
\begin{align*}
	V(t_0,x)&=\inf_{u\in\cU(t_0,T)} J(t_0,x;u)\\
	&=J(t_0,x;(\hat{\Xi},\hat{\Gamma},\hat{v})[t_0,x])\\
	&=\int^T_{t_0}\langle P^{(1)}(t)x(t),x(t)\rangle\,\diff t+\int^T_{t_0}\!\!\int^T_{t_0}\langle P^{(2)}(t_1,t_2,t_0)x(t_2),x(t_1)\rangle\,\diff t_1\!\,\diff t_2+2\int^T_{t_0}\langle\bE[\eta(t,t_0)],x(t)\rangle\,\diff t\\
	&\hspace{0.5cm}+\bE\Bigl[\int^T_{t_0}\Bigl\{(\sigma^\top\lint P\rint\sigma)(t)+2\int^T_t\langle\eta(s,t),b(s,t)\rangle\,\diff s+2\int^T_t\langle\zeta(s,t),\sigma(s,t)\rangle\,\diff s\\
	&\hspace{2cm}-\langle(R(t)\mathalpha{+}(D^\top\lint P\rint D)(t))^\dagger\kappa(t),\kappa(t)\rangle\Bigr\}\,\diff t\Bigr].
\end{align*}

\underline{The necessity}: Assume that $(\hat{\Xi},\hat{\Gamma},\hat{v})\in\cS(0,T)$ is a causal feedback optimal strategy of Problem (SVC). Let $P=(P^{(1)},P^{(2)})\in\Pi(0,T)$ be the solution to the Lyapunov--Volterra equation \eqref{eq_Lyapunov--Volterra_(Xi,Gamma)} with $(\Xi,\Gamma)=(\hat{\Xi},\hat{\Gamma})$, and let $(\eta,\zeta)\in L^2_{\bF,\mathrm{c}}(\triangle_2(0,T);\bR^d)\times L^2_\bF(\triangle_2(0,T);\bR^d)$ be the adapted solution to the Type-II EBSVIE \eqref{eq_Type-II_EBSVIE_(Xi,Gamma,v)} with $(\Xi,\Gamma,v)=(\hat{\Xi},\hat{\Gamma},\hat{v})$. By the optimality of $(\hat{\Xi},\hat{\Gamma},\hat{v})$, we have
\begin{equation*}
	J(t_0,x;(\hat{\Xi},\hat{\Gamma},\hat{v}+\mu \tilde{v})[t_0,x])-J(t_0,x;(\hat{\Xi},\hat{\Gamma},\hat{v})[t_0,x])\geq0
\end{equation*}
for any $(t_0,x)\in\cI$, $\tilde{v}\in\cU(0,T)$ and $\mu\in\bR$. By \cref{theo_representation_cost}, we must have
\begin{equation}\label{eq_optimality_1}
\begin{split}
	&\bE\Bigl[\int^T_{0}\Bigl\{\langle (R(t)+(D^\top\lint P\rint D)(t))\tilde{v}(t),\tilde{v}(t)\rangle\\
	&\hspace{1cm}+2\Bigl\langle\bigl(S(t)+(D^\top\lint P\rint C)(t)+(R(t)+(D^\top\lint P\rint D)(t))\hat{\Xi}(t)\bigr)\tilde{X}^{0,0}_0(t)\\
	&\hspace{2cm}+\int^T_t\bigl((B^\top\lint P)(s,t)+(R(t)+(D^\top\lint P\rint D)(t))\hat{\Gamma}(s,t)\bigr)\tilde{\Theta}^{0,0}_0(s,t)\,\diff s,\tilde{v}(t)\Bigr\rangle\Bigr\}\,\diff t\Bigr]\geq0\\
	&\hspace{8cm}\forall\,\tilde{v}\in\cU(0,T),
\end{split}
\end{equation}
with $(\tilde{X}^{0,0}_0,\tilde{\Theta}^{0,0}_0)$ being the causal feedback solution to the homogeneous controlled SVIE \eqref{eq_state_0} with respect to the zero input condition $(t_0,x)=(0,0)\in\cI$ and the causal feedback strategy $(\hat{\Xi},\hat{\Gamma},\tilde{v})\in\cS(0,T)$, and
\begin{equation}\label{eq_optimality_2}
\begin{split}
	&\bigl(S(t)+(D^\top\lint P\rint C)(t)+(R(t)+(D^\top\lint P\rint D)(t))\hat{\Xi}(t)\bigr)\hat{X}^{t_0,x}(t)\\
	&\hspace{0.5cm}+\int^T_t\bigl((B^\top\lint P)(s,t)+(R(t)+(D^\top\lint P\rint D)(t))\hat{\Gamma}(s,t)\bigr)\hat{\Theta}^{t_0,x}(s,t)\,\diff s\\
	&\hspace{0.5cm}+\kappa(t)+(R(t)+(D^\top\lint P\rint D)(t))\hat{v}(t)=0\\
	&\hspace{4cm}\text{for a.e.\ $t\in(t_0,T)$, a.s., $\forall\,(t_0,x)\in\cI$},
\end{split}
\end{equation}
with $(\hat{X}^{t_0,x},\hat{\Theta}^{t_0,x})$ being the causal feedback solution to the controlled SVIE \eqref{eq_state} with respect to the input condition $(t_0,x)\in\cI$ and the causal feedback strategy $(\hat{\Xi},\hat{\Gamma},\hat{v})\in\cS(0,T)$. By subtracting the left-hand sides of \eqref{eq_optimality_2} corresponding to the free terms $x$ and $0$, the later from the former, and then taking the expectations, we see that
\begin{equation}\label{eq_optimality_2'}
\begin{split}
	&\bigl(S(t)+(D^\top\lint P\rint C)(t)+(R(t)+(D^\top\lint P\rint D)(t))\hat{\Xi}(t)\bigr)\bE[\hat{X}^{t_0,x}_0(t)]\\
	&\hspace{0.5cm}+\int^T_t\bigl((B^\top\lint P)(s,t)+(R(t)+(D^\top\lint P\rint D)(t))\hat{\Gamma}(s,t)\bigr)\bE[\hat{\Theta}^{t_0,x}_0(s,t)]\,\diff s=0\\
	&\hspace{4cm}\text{for a.e.\ $t\in(t_0,T)$, $\forall\,(t_0,x)\in\cI$},
\end{split}
\end{equation}
where $\hat{X}^{t_0,x}_0:=\hat{X}^{t_0,x}-\hat{X}^{t_0,0}$ and $\hat{\Theta}^{t_0,x}_0:=\hat{\Theta}^{t_0,x}-\hat{\Theta}^{t_0,0}$. Observe that $(\hat{X}^{t_0,x}_0,\hat{\Theta}^{t_0,x}_0)$ is the causal feedback solution to the homogeneous controlled SVIE \eqref{eq_state_0} with respect to the input condition $(t_0,x)\in\cI$ and the causal feedback strategy $(\hat{\Xi},\hat{\Gamma},0)\in\cS(0,T)$. Thus, applying \cref{lemm_MN=0} to \eqref{eq_optimality_2'}, we obtain
\begin{equation}\label{eq_optimality_Xi}
	S(t)+(D^\top\lint P\rint C)(t)+(R(t)+(D^\top\lint P\rint D)(t))\hat{\Xi}(t)=0\ \text{for a.e.\ $t\in(0,T)$}
\end{equation}
and
\begin{equation}\label{eq_optimality_Gamma}
	(B^\top\lint P)(s,t)+(R(t)+(D^\top\lint P\rint D)(t))\hat{\Gamma}(s,t)=0\ \text{for a.e.\ $(s,t)\in\triangle_2(0,T)$.}
\end{equation}
Also, by \eqref{eq_optimality_2},
\begin{equation}\label{eq_optimality_v}
	\kappa(t)+(R(t)+(D^\top\lint P\rint D)(t))\hat{v}(t)=0\ \text{for a.e.\ $t\in(0,T)$, a.s.}
\end{equation}
Slightly modifying \cite[Proposition A.15]{SuYo20}, we obtain from \eqref{eq_optimality_Xi}, \eqref{eq_optimality_Gamma} and \eqref{eq_optimality_v} the following three assertions.
\begin{itemize}
\item[(i)]
$\sR(S(t)+(D^\top\lint P\rint C)(t))\subset\sR(R(t)+(D^\top\lint P\rint D)(t))$ for a.e.\ $t\in(0,T)$, and the function $\check{\Xi}$ given by
\begin{equation*}
	\check{\Xi}(t):=-(R(t)+(D^\top\lint P\rint D)(t))^\dagger(S(t)+(D^\top\lint P\rint C)(t)),\ t\in(0,T),
\end{equation*}
is in $L^\infty(0,T;\bR^{\ell\times d})$. Furthermore, $\hat{\Xi}$ is of the form
\begin{equation*}
	\hat{\Xi}(t)=\check{\Xi}(t)+(I_\ell-(R(t)+(D^\top\lint P\rint D)(t))^\dagger(R(t)+(D^\top\lint P\rint D)(t)))\tilde{\Xi}(t),\ t\in(0,T),
\end{equation*}
for some $\tilde{\Xi}\in L^\infty(0,T;\bR^{\ell\times d})$.
\item[(ii)]
$\sR((B^\top\lint P)(s,t))\subset\sR(R(t)+(D^\top\lint P\rint D)(t))$ for a.e.\ $(s,t)\in\triangle_2(0,T)$, and the function $\check{\Gamma}$ given by
\begin{equation*}
	\check{\Gamma}(s,t):=-(R(t)+(D^\top\lint P\rint D)(t))^\dagger(B^\top\lint P)(s,t),\ (s,t)\in\triangle_2(0,T),
\end{equation*}
is in $L^2(\triangle_2(0,T);\bR^{\ell\times d})$. Furthermore, $\hat{\Gamma}$ is of the form
\begin{equation*}
	\hat{\Gamma}(s,t)=\check{\Gamma}(s,t)+(I_\ell-(R(t)+(D^\top\lint P\rint D)(t))^\dagger(R(t)+(D^\top\lint P\rint D)(t)))\tilde{\Gamma}(s,t),\ (s,t)\in\triangle_2(0,T),
\end{equation*}
for some $\tilde{\Gamma}\in L^2(\triangle_2(0,T);\bR^{\ell\times d})$.
\item[(iii)]
$\kappa(t)\in\sR(R(t)+(D^\top\lint P\rint D)(t))$ for a.e.\ $t\in(0,T)$, a.s., and the process $\check{v}$ given by
\begin{equation*}
	\check{v}(t):=-(R(t)+(D^\top\lint P\rint D)(t))^\dagger\kappa(t),\ t\in(0,T),
\end{equation*}
is in $\cU(0,T)=L^2_\bF(0,T;\bR^\ell)$. Furthermore, $\hat{v}$ is of the form
\begin{equation*}
	\hat{v}(t)=\check{v}(t)+(I_\ell-(R(t)+(D^\top\lint P\rint D)(t))^\dagger(R(t)+(D^\top\lint P\rint D)(t)))\tilde{v}(t),\ t\in(0,T),
\end{equation*}
for some $\tilde{v}\in\cU(0,T)$.
\end{itemize}
By inserting the above expressions of $(\hat{\Xi},\hat{\Gamma},\hat{v})\in\cS(0,T)$ into the equations \eqref{eq_Lyapunov--Volterra_(Xi,Gamma)} and \eqref{eq_Type-II_EBSVIE_(Xi,Gamma,v)}, we see that the pair $P=(P^{(1)},P^{(2)})\in\Pi(0,T)$ solves the Riccati--Volterra equation \eqref{eq_Riccati--Volterra} and that the pair $(\eta,\zeta)$ is the adapted solution to the Type-II EBSVIE \eqref{eq_Type-II_EBSVIE+}. Furthermore, By \eqref{eq_optimality_Xi} and \eqref{eq_optimality_Gamma}, the condition \eqref{eq_optimality_1} becomes
\begin{equation*}
	\bE\Bigl[\int^T_{0}\langle (R(t)+(D^\top\lint P\rint D)(t))\tilde{v}(t),\tilde{v}(t)\rangle\,\diff t\Bigr]\geq0,\ \forall\,\tilde{v}\in\cU(0,T).
\end{equation*}
This immediately implies that $R(t)+(D^\top\lint P\rint D)(t)\geq0$ for a.e.\ $t\in(0,T)$. Therefore, $P$ is a regular solution of the Riccati--Volterra equation \eqref{eq_Riccati--Volterra}. This completes the proof.
\end{proof}

If the Riccati--Volterra equation \eqref{eq_Riccati--Volterra} admits a strongly regular solution $P=(P^{(1)},P^{(2)})\in\Pi(0,T)$, then the term $R+D^\top\lint P\rint D$ is invertible with the bounded inverse matrix $(R+D^\top\lint P\rint D)^\dagger=(R+D^\top\lint P\rint D)^{-1}$, and the condition (ii) in \cref{theo_optimality} automatically holds. Thus, we have the following corollary.


\begin{cor}\label{cor_unique_optimal_strategy}
Suppose that the Riccati--Volterra equation \eqref{eq_Riccati--Volterra} admits a strongly regular solution $P=(P^{(1)},P^{(2)})\in\Pi(0,T)$. Then Problem (SVC) has a unique causal feedback optimal strategy $(\hat{\Xi},\hat{\Gamma},\hat{v})\in\cS(0,T)$ given by
\begin{equation}\label{eq_unique_optimal_strategy}
\begin{split}
	&\hat{\Xi}(t)=-(R(t)\mathalpha{+}(D^\top\lint P\rint D)(t))^{-1}(S(t)\mathalpha{+}(D^\top\lint P\rint C)(t)),\ t\in(0,T),\\
	&\hat{\Gamma}(s,t)=-(R(t)\mathalpha{+}(D^\top\lint P\rint D)(t))^{-1}(B^\top\lint P)(s,t),\ (s,t)\in\triangle_2(0,T),\\
	&\hat{v}(t)=-(R(t)\mathalpha{+}(D^\top\lint P\rint D)(t))^{-1}\Bigl(\rho(t)\mathalpha{+}(D^\top\lint P\rint\sigma)(t)\mathalpha{+}\int^T_tB(s,t)^\top\eta(s,t)\,\diff s\mathalpha{+}\int^T_tD(s,t)^\top\zeta(s,t)\,\diff s\Bigr),\\
	&\hspace{8cm}t\in(0,T),
\end{split}
\end{equation}
where $(\eta,\zeta)\in L^2_{\bF,\mathrm{c}}(\triangle_2(0,T);\bR^d)\times L^2_\bF(\triangle_2(0,T);\bR^d)$ is the unique adapted solution to the Type-II EBSVIE \eqref{eq_Type-II_EBSVIE+}.
\end{cor}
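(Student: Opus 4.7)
The plan is to deduce this corollary directly from \cref{theo_optimality} by exploiting the fact that strong regularity makes the pseudoinverse collapse to an honest inverse and trivializes the range conditions. First I would observe that if $P=(P^{(1)},P^{(2)})\in\Pi(0,T)$ is a strongly regular solution of \eqref{eq_Riccati--Volterra}, i.e.\ $R(t)+(D^\top\lint P\rint D)(t)\geq\lambda I_\ell$ for a.e.\ $t\in(0,T)$ with some $\lambda>0$, then $M(t):=R(t)+(D^\top\lint P\rint D)(t)$ is a.e.\ invertible with $\|M(t)^{-1}\|\leq\lambda^{-1}$ a.e., and in particular $M(t)^\dagger=M(t)^{-1}$ and $\sR(M(t))=\bR^\ell$. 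Consequently the range containments in conditions (ii) and (iii) of the definition of a regular solution hold trivially, and the functions $\check{\Xi}$ and $\check{\Gamma}$ defined by \eqref{eq_check_Xi} and \eqref{eq_check_Gamma} lie in $L^\infty(0,T;\bR^{\ell\times d})$ and $L^2(\triangle_2(0,T);\bR^{\ell\times d})$, respectively, by \cref{lemm_lint-rint_integrability} combined with the uniform bound on $M^{-1}$. Thus $P$ is regular and condition (i) of \cref{theo_optimality} holds.

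Next I would verify condition (ii) of \cref{theo_optimality}. With $(\check{\Xi},\check{\Gamma})$ as above, \cref{theo_Type-II_EBSVIE} guarantees a unique adapted solution $(\eta,\zeta)\in L^2_{\bF,\mathrm{c}}(\triangle_2(0,T);\bR^d)\times L^2_\bF(\triangle_2(0,T);\bR^d)$ to the Type-II EBSVIE \eqref{eq_Type-II_EBSVIE+}, so the process $\kappa$ is well-defined and lies in $L^2_\bF(0,T;\bR^\ell)$ (the $(P\rint\cdot)$ and $(\cdot\lint P)$ type terms are handled by \cref{lemm_lint-rint_integrability}). Since $\sR(M(t))=\bR^\ell$, the range condition $\kappa(t)\in\sR(M(t))$ is automatic, and $\check{v}(t):=-M(t)^{-1}\kappa(t)$ is in $\cU(0,T)$ thanks to the bound $\|M^{-1}\|_{L^\infty(0,T)}\leq\lambda^{-1}$. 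So condition (ii) of \cref{theo_optimality} is met, and \cref{theo_optimality} then produces a causal feedback optimal strategy.

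Finally, for uniqueness and for the explicit form \eqref{eq_unique_optimal_strategy}, I would exploit the general representation in \cref{theo_optimality}. Since $M(t)^\dagger M(t)=M(t)^{-1}M(t)=I_\ell$ a.e., the projector
\[
I_\ell-(R(t)+(D^\top\lint P\rint D)(t))^\dagger(R(t)+(D^\top\lint P\rint D)(t))=0\quad\text{a.e.},
\]
so the arbitrary triplet $(\tilde{\Xi},\tilde{\Gamma},\tilde{v})\in\cS(0,T)$ in the representation drops out entirely and $(\hat{\Xi},\hat{\Gamma},\hat{v})=(\check{\Xi},\check{\Gamma},\check{v})$, giving both uniqueness and the formulas \eqref{eq_unique_optimal_strategy}.

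I do not anticipate any real obstacle here: the work was done in \cref{theo_optimality}, and the role of strong regularity is precisely to remove the pseudoinverse subtleties. The only minor care point is to invoke \cref{lemm_lint-rint_integrability} consistently to ensure that the coefficients appearing in $\check{\Xi}$, $\check{\Gamma}$ and in the Type-II EBSVIE \eqref{eq_Type-II_EBSVIE+} sit in the correct function spaces, so that \cref{theo_Type-II_EBSVIE} applies verbatim.
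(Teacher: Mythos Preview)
Your proposal is correct and follows essentially the same approach as the paper: the paper notes that strong regularity makes $R+D^\top\lint P\rint D$ invertible with bounded inverse, so condition (ii) of \cref{theo_optimality} holds automatically, and then the representation formula collapses because the projector $I_\ell-(R+D^\top\lint P\rint D)^\dagger(R+D^\top\lint P\rint D)$ vanishes. Your write-up simply spells out a few more of the integrability checks via \cref{lemm_lint-rint_integrability} and \cref{theo_Type-II_EBSVIE}, but the argument is the same.
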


In the homogeneous Problem (SVC)$^0$, the unique adapted solution of the Type-II EBSVIE \eqref{eq_Type-II_EBSVIE+} is $(\eta,\zeta)=(0,0)$. Thus, we immediately obtain the following corollary.


\begin{cor}
The homogeneous Problem (SVC)$^0$ has a causal feedback optimal strategy if and only if the Riccati--Volterra equation \eqref{eq_Riccati--Volterra} admits a regular solution $P=(P^{(1)},P^{(2)})\in\Pi(0,T)$. In this case, the value functional is given by
\begin{equation*}
	V^0(t_0,x)=\int^T_{t_0}\langle P^{(1)}(t)x(t),x(t)\rangle\,\diff t+\int^T_{t_0}\!\!\int^T_{t_0}\langle P^{(2)}(t_1,t_2,t_0)x(t_2),x(t_1)\rangle\,\diff t_1\!\,\diff t_2
\end{equation*}
for each $(t_0,x)\in\cI$.
\end{cor}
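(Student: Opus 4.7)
The strategy is to deduce this corollary directly from \cref{theo_optimality} by specializing to zero inhomogeneous data. When $b=\sigma=q=\rho=0$, every source term in the Type-II EBSVIE \eqref{eq_Type-II_EBSVIE+} vanishes: the driver contribution $(P\rint b)(t,s)+\check{\Gamma}(t,s)^\top(\rho(s)+(D^\top\lint P\rint\sigma)(s))$ collapses to $0$, and the value at $s=t$ given by $q(t)+(C^\top\lint P\rint\sigma)(t)+\check{\Xi}(t)^\top(\rho(t)+(D^\top\lint P\rint\sigma)(t))$ also collapses to $0$. Hence by the uniqueness part of \cref{theo_Type-II_EBSVIE} (applied with $\chi=0$ and $\psi=0$), the unique adapted solution is the trivial pair $(\eta,\zeta)=(0,0)$.

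With $(\eta,\zeta)\equiv (0,0)$, together with $\rho=\sigma=0$, the auxiliary process $\kappa$ in condition (ii) of \cref{theo_optimality} is identically zero. Consequently the range condition $\kappa(t)\in\sR(R(t)+(D^\top\lint P\rint D)(t))$ holds automatically and the process $\check{v}$ defined by \eqref{eq_check_v} vanishes and therefore lies in $\cU(0,T)$. So condition (ii) is vacuously satisfied in the homogeneous setting, and \cref{theo_optimality} reduces to the statement that Problem (SVC)$^0$ admits a causal feedback optimal strategy if and only if \eqref{eq_Riccati--Volterra} admits a regular solution $P=(P^{(1)},P^{(2)})\in\Pi(0,T)$. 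Both directions of the corollary then follow directly.

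For the value functional formula, I would substitute $b=\sigma=q=\rho=0$, $(\eta,\zeta)=(0,0)$ and $\kappa=0$ into the expression for $V(t_0,x)$ given at the end of \cref{theo_optimality}. The terms $(\sigma^\top\lint P\rint\sigma)(t)$, $\int^T_t\langle\eta(s,t),b(s,t)\rangle\,\diff s$, $\int^T_t\langle\zeta(s,t),\sigma(s,t)\rangle\,\diff s$, $\langle(R(t)+(D^\top\lint P\rint D)(t))^\dagger\kappa(t),\kappa(t)\rangle$ and $\bE[\eta(t,t_0)]$ all disappear, leaving precisely the purely quadratic expression in $x$ claimed in the corollary. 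Since the argument is a direct specialization throughout, there is no substantive obstacle; the only step requiring a brief check is the careful bookkeeping to confirm that every inhomogeneous contribution in \eqref{eq_Type-II_EBSVIE+} and in the value-functional formula indeed collapses when $b,\sigma,q,\rho$ are set to zero.
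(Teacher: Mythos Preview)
Your proposal is correct and follows essentially the same approach as the paper: the paper simply notes that in the homogeneous case the unique adapted solution of the Type-II EBSVIE \eqref{eq_Type-II_EBSVIE+} is $(\eta,\zeta)=(0,0)$ and declares the corollary immediate from \cref{theo_optimality}. Your argument makes this explicit by verifying that the source terms vanish, invoking uniqueness from \cref{theo_Type-II_EBSVIE}, checking that $\kappa=0$ renders condition (ii) automatic, and then specializing the value-functional formula.
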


From the above representation formula of the homogeneous value functional $V^0(t_0,x)$ and \cref{lemm_Pi_0}, we get the following uniqueness result of the regular solution to the Riccati--Volterra equation.


\begin{cor}
The Riccati--Volterra equation \eqref{eq_Riccati--Volterra} has at most one regular solution.
\end{cor}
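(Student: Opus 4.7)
Suppose $P_1=(P_1^{(1)},P_1^{(2)})$ and $P_2=(P_2^{(1)},P_2^{(2)})\in\Pi(0,T)$ are two regular solutions of the Riccati--Volterra equation \eqref{eq_Riccati--Volterra}. The plan is to apply the preceding corollary to both $P_1$ and $P_2$ and then invoke \cref{lemm_Pi_0}.

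First, by the preceding corollary applied to the homogeneous Problem (SVC)$^0$ with each regular solution $P_i$ ($i=1,2$), a causal feedback optimal strategy exists, and the value functional of Problem (SVC)$^0$ admits the representation
\begin{equation*}
	V^0(t_0,x)=\int^T_{t_0}\langle P_i^{(1)}(t)x(t),x(t)\rangle\,\diff t+\int^T_{t_0}\!\!\int^T_{t_0}\langle P_i^{(2)}(t_1,t_2,t_0)x(t_2),x(t_1)\rangle\,\diff t_1\!\,\diff t_2
\end{equation*}
for every $(t_0,x)\in\cI$, both for $i=1$ and $i=2$. Since the left-hand side $V^0(t_0,x)$ is intrinsic to Problem (SVC)$^0$ and does not depend on which regular solution is used, subtracting the two formulae yields
\begin{equation*}
	\int^T_{t_0}\langle (P_1^{(1)}-P_2^{(1)})(t)x(t),x(t)\rangle\,\diff t+\int^T_{t_0}\!\!\int^T_{t_0}\langle (P_1^{(2)}-P_2^{(2)})(t_1,t_2,t_0)x(t_2),x(t_1)\rangle\,\diff t_1\!\,\diff t_2=0
\end{equation*}
for every $(t_0,x)\in\cI$.

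Next, observe that the space $\Pi(0,T)$ is linear (it is a Banach space by \cref{rem_Pi_Banach}), so the difference $P_1-P_2:=(P_1^{(1)}-P_2^{(1)},P_1^{(2)}-P_2^{(2)})$ belongs to $\Pi(0,T)$; in particular, the symmetry condition $(P_1^{(2)}-P_2^{(2)})(s_1,s_2,t)=(P_1^{(2)}-P_2^{(2)})(s_2,s_1,t)^\top$ is inherited. Hence the hypothesis of \cref{lemm_Pi_0} is satisfied by $P_1-P_2$, which forces $P_1^{(1)}=P_2^{(1)}$ a.e.\ on $(0,T)$ and $P_1^{(2)}=P_2^{(2)}$ pointwise in $t\in[0,s_1\wedge s_2]$ for a.e.\ $(s_1,s_2)\in(0,T)^2$. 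Thus $P_1=P_2$ in $\Pi(0,T)$.

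The argument is essentially a soft uniqueness proof: all the analytic work has already been done in establishing the value-functional representation and in the separation lemma. The only mildly delicate point is to confirm that the difference of two elements of $\Pi(0,T)$ retains all the structural properties (essential boundedness of the first component, absolute continuity in $t$, symmetry, the relevant integrability of $P^{(2)}$ and $\dot{P}^{(2)}$) so that \cref{lemm_Pi_0} applies directly; this is immediate from the Banach-space structure noted in \cref{rem_Pi_Banach}.
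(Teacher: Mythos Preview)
Your argument is correct and follows exactly the route the paper indicates: use the preceding corollary's representation of $V^0(t_0,x)$ for each regular solution, subtract, and apply \cref{lemm_Pi_0} to the difference $P_1-P_2\in\Pi(0,T)$. The paper's own proof is just the one-line remark preceding the corollary, and you have faithfully expanded it.
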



\begin{rem}\label{rem_B=0}
Consider the case where the control does not enter the drift part, that is, $B=0$. In this case, the Riccati--Volterra equation \eqref{eq_Riccati--Volterra} becomes
\begin{equation*}
	\begin{dcases}
	P^{(1)}(t)=Q(t)+(C^\top\lint P\rint C)(t)\\
	\hspace{1.5cm}-(S(t)^\top+(C^\top\lint P\rint D)(t))(R(t)+(D^\top\lint P\rint D)(t))^\dagger(S(t)+(D^\top\lint P\rint C)(t)),\ t\in(0,T),\\
	P^{(2)}(s,t,t)=P^{(2)}(t,s,t)^\top=(P\rint A)(s,t),\ (s,t)\in\triangle_2(0,T),\\
	\dot{P}^{(2)}(s_1,s_2,t)=0,\ (s_1,s_2,t)\in\square_3(0,T).
	\end{dcases}
\end{equation*}
Thus, if there exists a (strongly) regular solution $P=(P^{(1)},P^{(2)})\in\Pi(0,T)$ to the above Riccati--Volterra equation, then $P^{(2)}(s_1,s_2,t)$ does not depend on the last parameter $t\in(0,s_1\wedge s_2)$. Furthermore, the function $\check{\Gamma}$ in \eqref{eq_check_Gamma} vanishes. Therefore, in this case, there exists a (unique) causal feedback optimal strategy $(\hat{\Xi},\hat{\Gamma},\hat{v})$ of Problem (SVC) with $\hat{\Gamma}=0$. In other words, the (unique) causal feedback optimal strategy is a \emph{state-feedback form} in the sense that it does not use the feedback of the forward state process $\Theta$. Furthermore, in the case of the homogeneous Problem (SVC)$^0$, the stochastic inhomogeneous term $\hat{v}$ can be zero. In this case, the (unique) causal feedback optimal strategy is a \emph{Markovian state-feedback form} in the sense that it is just a deterministic linear functional of the current state. This is a surprising consequence since, even in the homogeneous Problem (SVC)$^0$ with $B=0$, the state process is highly non-Markovian and being non-semimartingale due to the Volterra structure.
\end{rem}


\begin{rem}
Very recently, a similar fact as in \cref{rem_B=0} was also found in an independent work of Wang, Yong and Zhou \cite{WaYoZh22}, where an LQ stochastic Volterra control problem (involving a terminal cost) was studied in the open-loop framework. In \cite{WaYoZh22}, the coefficients $A,B,C$ and $D$ are non-convolution-type but assumed to be regular (i.e.\ bounded and differentiable), the inhomogeneous terms $b,\sigma,q$ and $\rho$ are zeros, $S=0$, and the weighting matrices $Q$ and $R$ are assumed to be non-negative and strictly positive definite, respectively. By a dynamic programming method and a decoupling technique, they derived a causal feedback represention of the open-loop optimal control by means of a path-dependent (operator-valued) Riccati equation, which is different from our Riccati--Volterra equation \eqref{eq_Riccati--Volterra}. Compared to \cite{WaYoZh22}, our problem is in the closed-loop framework, and the Riccati--Volterra equation \eqref{eq_Riccati--Volterra} is a system of integro-differential equations for the (finite-dimensional) kernels $P=(P^{(1)},P^{(2)})$ of a self-adjoint operator $\cP^{t_0}$ (see \cref{lemm_self-adjoint}).
\end{rem}


\section{Strongly regular solvability of the Riccati--Volterra equation}\label{section_strongly_regular_solvability}

As we have seen in the previous section, any causal feedback optimal strategies of Problem (SVC) are characterized by using the (unique) regular solution of the Riccati--Volterra equation \eqref{eq_Riccati--Volterra}. Also, the existence of the strongly regular solution, which is stronger than the regular solution, implies the uniqueness of the causal feedback optimal strategy.

In this section, we prove the equivalence between the strongly regular solvability of Riccati--Volterra equation \eqref{eq_Riccati--Volterra} and the uniform convexity of the cost functional. Furthermore, we provide a sufficient condition for the two equivalent properties.


\begin{defi}
Let $(H,\|\cdot\|_H)$ be a Hilbert space, and consider a functional $F:H\to\bR$. We say that $F$ is uniformly convex if there exists a constant $\lambda>0$ such that, for any $u_1,u_2\in H$ and $\mu\in[0,1]$, it holds that
\begin{equation*}
	F((1-\mu)u_1+\mu u_2)\leq(1-\mu)F(u_1)+\mu F(u_2)-\lambda(1-\mu)\mu\|u_1-u_2\|^2_H.
\end{equation*}
\end{defi}


\begin{lemm}\label{lemm_uniform_convex}
The following are equivalent:
\begin{itemize}
\item[(i)]
The functional $\cU(0,T)\ni u\mapsto J^0(0,0;u)$ is uniformly convex;
\item[(i)']
There exists a constant $\lambda>0$ such that $J^0(0,0;u)\geq\lambda\bE[\int^T_{0}|u(t)|^2\,\diff t]$ for any $u\in\cU(0,T)$;
\item[(ii)]
For some $(\Xi,\Gamma)\in L^\infty(0,T;\bR^{\ell\times d})\times L^2(\triangle_2(0,T);\bR^{\ell\times d})$, the functional $\cU(0,T)\ni v\mapsto J^0(0,0;(\Xi,\Gamma,v)^0[0,0])$ is uniformly convex;
\item[(ii)']
There exist $(\Xi,\Gamma)\in L^\infty(0,T;\bR^{\ell\times d})\times L^2(\triangle_2(0,T);\bR^{\ell\times d})$ and  $\lambda>0$ such that $J^0(0,0;(\Xi,\Gamma,v)^0[0,0])\geq\lambda\bE[\int^T_{0}|v(t)|^2\,\diff t]$ for any $v\in\cU(0,T)$;
\item[(iii)]
There exists a constant $\lambda>0$ such that $J^0(t_0,0;u)\geq\lambda\bE[\int^T_{t_0}|u(t)|^2\,\diff t]$ for any $t_0\in[0,T)$ and any $u\in\cU(t_0,T)$.
\end{itemize}
If one of the above conditions holds, then there exist $\lambda>0$ and $\alpha\in\bR$ such that, for any $(\Xi,\Gamma)\in L^\infty(0,T;\bR^{\ell\times d})\times L^2(\triangle_2(0,T);\bR^{\ell\times d})$, the solution $P=(P^{(1)},P^{(2)})\in\Pi(0,T)$ to the Lyapunov--Volterra equation \eqref{eq_Lyapunov--Volterra_(Xi,Gamma)} satisfies
\begin{equation}\label{eq_estimate_lambda}
	R(t)+(D^\top\lint P\rint D)(t)\geq\lambda I_\ell
\end{equation}
for a.e.\ $t\in(0,T)$, and
\begin{equation}\label{eq_estimate_alpha}
	\int^T_{t_0}\langle P^{(1)}(t)x(t),x(t)\rangle\,\diff t+\int^T_{t_0}\!\!\int^T_{t_0}\langle P^{(2)}(s_1,s_2,t_0)x(s_2),x(s_1)\rangle\,\diff s_1\!\,\diff s_2\geq\alpha\int^T_{t_0}|x(t)|^2\,\diff t
\end{equation}
for any $(t_0,x)\in\cI$.
\end{lemm}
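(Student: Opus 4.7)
The plan is to first establish the chain of equivalences via standard quadratic analysis, then derive the two uniform lower bounds on the Lyapunov--Volterra solution through a localization argument and a minimization identity.

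For the equivalences, the key point is that both $u\mapsto J^0(0,0;u)$ and $v\mapsto J^0(0,0;(\Xi,\Gamma,v)^0[0,0])$ are purely quadratic forms on their respective Hilbert spaces, since with $x=0$ and vanishing inhomogeneous terms the state depends linearly on the control variable; uniform convexity of a quadratic form on a Hilbert space is classically equivalent to the estimate $\geq\lambda\|\cdot\|^2$, giving (i)\,$\Leftrightarrow$\,(i)' and (ii)\,$\Leftrightarrow$\,(ii)'. The equivalence (i)'\,$\Leftrightarrow$\,(ii)' follows from the fact that the map $v\mapsto u=(\Xi,\Gamma,v)^0[0,0]$ and its inverse $u\mapsto v=u-\Xi X_0[u]-\int_t^T\Gamma(s,t)\Theta_0[u](s,t)\,\diff s$ are bounded linear bijections of $\cU(0,T)$ onto itself by \cref{theo_SVIE} and the standard open-loop SVIE estimate; chaining the bounds turns a quadratic lower bound for one variable into a quadratic lower bound for the other. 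Finally, (i)'\,$\Leftrightarrow$\,(iii) is a time-shift: one direction is trivial by setting $t_0=0$; conversely, given $u\in\cU(t_0,T)$ I extend to $\tilde u\in\cU(0,T)$ by zero on $[0,t_0]$, so that uniqueness forces the homogeneous state to vanish on $[0,t_0]$, yielding $J^0(0,0;\tilde u)=J^0(t_0,0;u)$ and $\|\tilde u\|^2_{L^2_\bF(0,T)}=\|u\|^2_{L^2_\bF(t_0,T)}$.

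For the pointwise bound \eqref{eq_estimate_lambda}, I plan a localization argument. Fix $(\Xi,\Gamma)$ with Lyapunov--Volterra solution $P$, pick a Lebesgue point $t_*$ of $R+D^\top\lint P\rint D$, and for $\xi\in\bR^\ell$ and small $\epsilon>0$ set $v_\epsilon:=\mathbf{1}_{(t_*,t_*+\epsilon)}\xi$ and $u_\epsilon:=(\Xi,\Gamma,v_\epsilon)^0[t_*,0]$ with state pair $(X_\epsilon,\Theta_\epsilon)$. Applying (iii) at $t_0=t_*$ to $u_\epsilon$ gives $J^0(t_*,0;u_\epsilon)\geq\lambda\|u_\epsilon\|^2$, while the second formula in \cref{rem_expression_cost_functional} evaluates the left-hand side as $\int_{t_*}^{t_*+\epsilon}\langle(R+D^\top\lint P\rint D)(t)\xi,\xi\rangle\,\diff t+2\mathrm{Cross}_\epsilon$, where $\mathrm{Cross}_\epsilon$ pairs $X_\epsilon,\Theta_\epsilon$ against $L^\infty$- and $L^2$-kernels built from $S,R,B,C,D,\Xi,\Gamma$ and $P$. \cref{theo_SVIE} gives the uniform bound $\|X_\epsilon\|_{L^2_\bF(t_*,T)}+\|\Theta_\epsilon\|_{L^2_{\bF,\mathrm{c}}(\triangle_2(t_*,T))}\leq K\sqrt{\epsilon}|\xi|$; refining on the short interval $[t_*,t_*+\epsilon]$ by exploiting absolute continuity of the $L^2$-norm (and the refined partition property built into $\sL^2$ for the diffusion kernels $C,D$) should upgrade this to $\|X_\epsilon\|_{L^2_\bF(t_*,t_*+\epsilon)}=o(\sqrt{\epsilon})$ (and similarly for $\Theta_\epsilon$), whence $\mathrm{Cross}_\epsilon=o(\epsilon)$ and $\|u_\epsilon\|^2=\epsilon|\xi|^2+o(\epsilon)$. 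Dividing by $\epsilon$, invoking Lebesgue differentiation on the diagonal term, and letting $\epsilon\downarrow0$ produces $\langle(R+D^\top\lint P\rint D)(t_*)\xi,\xi\rangle\geq\lambda|\xi|^2$, with $\lambda$ the constant from (iii) and hence uniform in $(\Xi,\Gamma)$.

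For the integral bound \eqref{eq_estimate_alpha}, I will apply \cref{theo_Lyapunov--Volterra} with $(Q^{(1)},Q^{(2)},Q^{(3)})=(Q^{(1)}[\Xi],Q^{(2)}[\Xi,\Gamma],Q^{(3)}[\Gamma])$. A direct expansion---mimicking the first step of the proof of \cref{theo_representation_cost}---identifies the right-hand side of \eqref{eq_quadratic_representation} with $J^0(t_0,x;u^\sharp)$ for the feedback outcome $u^\sharp=(\Xi,\Gamma,0)^0[t_0,x]$, which is bounded below by $V^0(t_0,x):=\inf_{u\in\cU(t_0,T)}J^0(t_0,x;u)$. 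Splitting the homogeneous state as $X_0=X_0^x+X_0^u$ according to its $x$- and $u$-contributions yields $J^0(t_0,x;u)=J^0(t_0,x;0)+J^0(t_0,0;u)+2\langle\ell_{t_0,x},u\rangle_{\cU(t_0,T)}$ with $|J^0(t_0,x;0)|+\|\ell_{t_0,x}\|^2\leq C\|x\|^2_{L^2(t_0,T)}$ by the open-loop SVIE estimate. Combining with (iii) and the elementary inequality $\inf_u\{\lambda\|u\|^2+2\langle\ell,u\rangle\}=-\|\ell\|^2/\lambda$ gives $V^0(t_0,x)\geq\alpha\|x\|^2_{L^2(t_0,T)}$ for a constant $\alpha\in\bR$ depending only on the problem data, which is \eqref{eq_estimate_alpha} uniformly in $(\Xi,\Gamma)$. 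The hardest step will be the rigorous verification of the $o(\sqrt{\epsilon})$ refinement for $\|X_\epsilon\|_{L^2_\bF(t_*,t_*+\epsilon)}$ and its $\Theta$-analogue in the localization, since the uniform a~priori bound of \cref{theo_SVIE} only delivers $O(\sqrt{\epsilon})$; one must squeeze out the extra decay from the $L^2$/$\sL^2$ structure of the singular coefficients (e.g.\ by a Gronwall-type iteration on the shrinking interval) in order to push $\mathrm{Cross}_\epsilon$ to strictly lower order than the diagonal contribution.
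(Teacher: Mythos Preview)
Your treatment of the equivalences and of \eqref{eq_estimate_alpha} is essentially the paper's: the paper also uses the zero-extension for (i)'\,$\Rightarrow$\,(iii), the bounded bijection $v\mapsto(\Xi,\Gamma,v)^0[0,0]$ for (ii)'\,$\Rightarrow$\,(i)', and the quadratic expansion $J^0(t_0,x;u)=J^0(t_0,x;0)+J^0(t_0,0;u)+\text{linear}$ together with (iii) and an explicit minimization to get the uniform lower bound $\alpha$.

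For \eqref{eq_estimate_lambda}, however, your proposal contains a genuine error. The assertion $\|u_\epsilon\|^2_{\cU(t_*,T)}=\epsilon|\xi|^2+o(\epsilon)$ is \emph{false} in general: the norm is taken over the full interval $(t_*,T)$, and $u_\epsilon-v_\epsilon=\Xi X_\epsilon+\int\Gamma\Theta_\epsilon$ is typically of order $\sqrt{\epsilon}$ on \emph{all} of $(t_*,T)$, not just on $(t_*,t_*+\epsilon)$. For instance, with $A=B=C=0$, $D$ constant, $\Gamma=0$ and $\Xi$ constant nonzero, one checks that $\|X_\epsilon\|^2_{L^2_\bF(t_*,T)}\sim c\epsilon|\xi|^2$ with $c>0$, so $\|u_\epsilon\|^2=(1+c')\epsilon|\xi|^2+o(\epsilon)$ for some $c'>0$. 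Your short-interval refinement $\|X_\epsilon\|_{L^2_\bF(t_*,t_*+\epsilon)}=o(\sqrt{\epsilon})$ does not see this. The fix is easy but you did not identify it: one only needs the \emph{lower} bound $\|u_\epsilon\|^2\geq\epsilon|\xi|^2-o(\epsilon)$, which follows from $\|u_\epsilon\|^2=\|v_\epsilon\|^2+2\langle v_\epsilon,u_\epsilon-v_\epsilon\rangle+\|u_\epsilon-v_\epsilon\|^2$, the positivity of the last term, and the fact that the middle term involves $X_\epsilon,\Theta_\epsilon$ only through the short interval (since $v_\epsilon$ is supported there) and hence is $o(\epsilon)$ by your refinement.

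The paper sidesteps this issue entirely by a different route. Starting from $J^0(0,0;(\Xi,\Gamma,\tilde v)^0[0,0])\geq\lambda\|\Xi\tilde X_0+\int\Gamma\tilde\Theta_0+\tilde v\|^2$, it applies the elementary inequality $|a+b|^2\geq|b|^2+2\langle a,b\rangle$ to absorb the feedback part $\Xi\tilde X_0+\int\Gamma\tilde\Theta_0$ into a cross term of the \emph{same structure} as the one coming from the representation in \cref{rem_expression_cost_functional}; subtracting yields $\bE\int_0^T\{\langle R_\lambda\tilde v,\tilde v\rangle+2\langle\Xi_\lambda\tilde X_0+\int\Gamma_\lambda\tilde\Theta_0,\tilde v\rangle\}\,\diff t\geq0$ with $R_\lambda=R+D^\top\lint P\rint D-\lambda I_\ell$. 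Then, taking $\tilde v$ deterministic ($\tilde v^N=\sqrt N\1_{[\tau,\tau+1/N]}v$), only the \emph{expectations} $\bE[\tilde X_0],\bE[\tilde\Theta_0]$ enter the cross term, and these satisfy a \emph{deterministic} Volterra equation with an explicit resolvent representation (\cref{lemm_expectation_explicit}); the cross term is then shown to vanish by an $L^2(\triangle_2)$ bound on the resulting kernel $G_\lambda$, with no need for your $o(\sqrt{\epsilon})$ refinement at all. This is both shorter and avoids the delicate Gronwall argument on singular kernels that your approach would require.
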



\begin{proof}
Noting \eqref{eq_cost_expansion}, we can easily show the equivalences (i)\,$\Leftrightarrow$\,(i)' and (ii)\,$\Leftrightarrow$\,(ii)', and we omit the details. The implications (iii)\,$\Rightarrow$\,(i)'\,$\Rightarrow$\,(ii)' are trivial.

\underline{(ii)'\,$\Rightarrow$\,(i)'}: Suppose that (ii)' holds. By \cref{lemm_strategy_bijective}, the map $\cT:v\mapsto(\Xi,\Gamma,v)^0[0,0]$ is a bijective bounded linear operator on the Hilbert space $\cU(0,T)$. By the inverse mapping theorem, the inverse $\cT^{-1}$ is a bounded linear operator on $\cU(0,T)$, and thus the operator norm $\|\cT\|_\op$ of $\cT$ is positive. Thus, for any $u\in\cU(0,T)$, we have
\begin{equation*}
	J^0(0,0;u)=J^0(0,0;(\Xi,\Gamma,\cT^{-1}u)^0[0,0])\geq\lambda\bE\Bigl[\int^T_0|(\cT^{-1}u)(t)|^2\,\diff t\Bigr]\geq\lambda\|\cT\|^{-2}_\op\bE\Bigl[\int^T_0|u(t)|^2\,\diff t\Bigr].
\end{equation*}
Hence, (i)' holds.

\underline{(i)'\,$\Rightarrow$\,(iii)}: Suppose that (i)' holds. For any $t_0\in[0,T)$ and $u\in\cU(t_0,T)$, denote by $X^{t_0,u}_0$ the corresponding state process with the free term and the inhomogeneous terms being zero. Define the zero extension of $u$ by $\tilde{u}(t):=0$ for $t\in(0,t_0]$ and $\tilde{u}(t):=u(t)$ for $t\in(t_0,T)$. Then $\tilde{u}\in\cU(0,T)$. Furthermore, by the uniqueness of the solution to the SVIE, the corresponding state process satisfies $X^{0,\tilde{u}}_0(t)=0$ for $t\in(0,t_0]$ and $X^{0,\tilde{u}}_0(t)=X^{t_0,u}_0(t)$ for $t\in(t_0,T)$. Therefore, we get
\begin{equation*}
	J^0(t_0,0;u)=J^0(0,0;\tilde{u})\geq\lambda\bE\Bigl[\int^T_{0}|\tilde{u}(t)|^2\,\diff t\Bigr]=\lambda\bE\Bigl[\int^T_{t_0}|u(t)|^2\,\diff t\Bigr],
\end{equation*}
and thus (iii) holds. This completes the proof of the equivalence of (i),(i)',(ii),(ii)' and (iii).

\underline{Proof of the last assertion}: Assume that (i) holds with the constant $\lambda>0$. Let $(\Xi,\Gamma)\in L^\infty(0,T;\bR^{\ell\times d})\times L^2(\triangle_2(0,T);\bR^{\ell\times d})$ be given, and let $P=(P^{(1)},P^{(2)})\in\Pi(0,T)$ be the solution to the Lyapunov--Volterra equation \eqref{eq_Lyapunov--Volterra_(Xi,Gamma)}. By \cref{theo_representation_cost}, for any $\tilde{v}\in\cU(0,T)$, we have
\begin{align*}
	&J^0(0,0;(\Xi,\Gamma,\tilde{v})^0[0,0])\\
	&=\bE\Bigl[\int^T_0\Bigl\{\langle(R(t)+(D^\top\lint P\rint D)(t))\tilde{v}(t),\tilde{v}(t)\rangle\\
	&\hspace{2cm}+2\Bigl\langle(S(t)+(D^\top\lint P\rint C)(t)+(R(t)+(D^\top\lint P\rint D)(t))\Xi(t))\tilde{X}_0(t)\\
	&\hspace{3cm}+\int^T_t((B^\top\rint P)(s,t)+(R(t)+(D^\top\lint P\rint D)(t))\Gamma(s,t))\tilde{\Theta}_0(s,t)\,\diff s,\tilde{v}(t)\Bigr\rangle\Bigr\}\,\diff t\Bigr],
\end{align*}
where $(\tilde{X}_0,\tilde{\Theta}_0)=(\tilde{X}^{0,0}_0,\tilde{\Theta}^{0,0}_0)$ is the causal feedback solution to the homogeneous controlled SVIE \eqref{eq_state_0} with respect to the zero input condition $(t_0,x)=(0,0)\in\cI$ and the causal feedback strategy $(\Xi,\Gamma,\tilde{v})\in\cS(0,T)$. On the other hand, by the assumption, we have
\begin{align*}
	J^0(0,0;(\Xi,\Gamma,\tilde{v})^0[0,0])&\geq\lambda\bE\Bigl[\int^T_0|(\Xi,\Gamma,\tilde{v})^0[0,0](t)|^2\,\diff t\Bigr]\\
	&=\lambda\bE\Bigl[\int^T_0\Bigl|\Xi(t)\tilde{X}_0(t)+\int^T_t\Gamma(s,t)\tilde{\Theta}_0(s,t)\,\diff s+\tilde{v}(t)\Bigr|^2\,\diff t\Bigr]\\
	&\geq\lambda\bE\Bigl[\int^T_0\Bigl\{|\tilde{v}(t)|^2+2\Bigl\langle\Xi(t)\tilde{X}_0(t)+\int^T_t\Gamma(s,t)\tilde{\Theta}_0(s,t)\,\diff s,\tilde{v}(t)\Bigr\rangle\Bigr\}\,\diff t\Bigr].
\end{align*}
Thus, we have
\begin{equation*}
	\bE\Bigl[\int^T_0\Bigl\{\langle R_\lambda(t)\tilde{v}(t),\tilde{v}(t)\rangle+2\Bigl\langle\Xi_\lambda(t)\tilde{X}_0(t)+\int^T_t\Gamma_\lambda(s,t)\tilde{\Theta}_0(s,t)\,\diff s,\tilde{v}(t)\Bigr\rangle\Bigr\}\,\diff t\Bigr]\geq0
\end{equation*}
for any $\tilde{v}\in\cU(0,T)$, where
\begin{align*}
	&R_\lambda(t):=R(t)+(D^\top\lint P\rint D)(t)-\lambda I_\ell,\ t\in(0,T),\\
	&\Xi_\lambda(t):=S(t)+(D^\top\lint P\rint C)(t)+(R(t)+(D^\top\lint P\rint D)(t)-\lambda I_\ell)\Xi(t),\ t\in(0,T),\\
	&\Gamma_\lambda(s,t):=(B^\top\lint P)(s,t)+(R(t)+(D^\top\lint P\rint D)(t)-\lambda I_\ell)\Gamma(s,t),\ (s,t)\in\triangle_2(0,T).
\end{align*}
Now we consider $\tilde{v}^N(t)=\sqrt N\1_{[\tau,\tau+1/N]}(t)v$, $t\in(0,T)$, for arbitrary $\tau\in(0,T)$, $N\in\bN$ with $\tau+1/N<T$ and $v\in\bR^\ell$. Let $(\tilde{X}^N_0,\tilde{\Theta}^N_0)$ be the corresponding causal feedback solution. Then we have
\begin{equation}\label{eq_lambda_inequality}
	\Bigl\langle N\int^{\tau+1/N}_\tau R_\lambda(t)\,\diff t\,v,v\Bigr\rangle+2\Bigl\langle \sqrt{N}\int^{\tau+1/N}_\tau\Bigl\{\Xi_\lambda(t)\bE[\tilde{X}^N_0(t)]+\int^T_t\Gamma_\lambda(s,t)\bE[\tilde{\Theta}^N_0(s,t)]\,\diff s\Bigr\}\,\diff t,v\Bigr\rangle\geq0.
\end{equation}
On the one hand, by the Lebesgue differentiation theorem, we have $\lim_{N\to\infty}N\int^{\tau+1/N}_\tau R_\lambda(t)\,\diff t=R_\lambda(\tau)$ for a.e.\ $\tau\in(0,T)$. On the other hand, by \cref{lemm_expectation_explicit}, we have
\begin{align*}
	&\bE[\tilde{X}^N_0(t)]=\sqrt{N}\int^t_0G_2(t,t,\theta)\1_{[\tau,\tau+1/N]}(\theta)v\,\diff\theta,\ t\in(0,T),\\
	&\bE[\tilde{\Theta}^N_0(s,t)]=\sqrt{N}\int^t_0G_2(s,t,\theta)\1_{[\tau,\tau+1/N]}(\theta)v\,\diff\theta,\ (s,t)\in\triangle_2(0,T),
\end{align*}
with $G_2:\{(s,t,\theta)\in(0,T)^3\,|\,T>s\geq t\geq \theta>0\}\to\bR^{d\times \ell}$ satisfying the estimate \eqref{eq_G_i_estimate}. Thus, the second term of the left-hand side of \eqref{eq_lambda_inequality} becomes
\begin{equation*}
	\Bigl\langle \sqrt{N}\int^{\tau+1/N}_\tau\Bigl\{\Xi_\lambda(t)\bE[\tilde{X}^N_0(t)]+\int^T_t\Gamma_\lambda(s,t)\bE[\tilde{\Theta}^N_0(s,t)]\,\diff s\Bigr\}\,\diff t,v\Bigr\rangle=\Bigl\langle N\int^{\tau+1/N}_\tau\!\!\int^t_\tau G_\lambda(t,\theta)\,\diff \theta\!\,\diff t\,v,v\Bigr\rangle
\end{equation*}
with $G_\lambda:\triangle_2(0,T)\to\bR^{\ell\times\ell}$ defined by
\begin{equation*}
	G_\lambda(t,\theta):=\Xi_\lambda(t)G_2(t,t,\theta)+\int^T_t\Gamma_\lambda(s,t)G_2(s,t,\theta)\,\diff s,\ (t,\theta)\in\triangle_2(0,T).
\end{equation*}
Noting the estimate \eqref{eq_G_i_estimate}, we can easily show that $G_\lambda\in L^2(\triangle_2(0,T);\bR^{\ell\times\ell})$, and thus
\begin{equation*}
	N\int^{\tau+1/N}_\tau\!\!\int^t_\tau |G_\lambda(t,\theta)|\,\diff \theta\!\,\diff t\leq\frac{1}{\sqrt{2}}\Bigl(\int^{\tau+1/N}_\tau\!\!\int^t_\tau |G_\lambda(t,\theta)|^2\,\diff \theta\!\,\diff t\Bigr)^{1/2}\to0
\end{equation*}
as $N\to\infty$. This implies that the second term in the left-hand side of the inequality \eqref{eq_lambda_inequality} tends to zero  as $N\to\infty$ for any $\tau\in(0,T)$. Consequently, we have $\langle R_\lambda(\tau)v,v\rangle\geq0$ for a.e.\ $\tau\in(0,T)$ for any $v\in\bR^\ell$, which implies the estimate \eqref{eq_estimate_lambda}.

Lastly, we prove \eqref{eq_estimate_alpha}. Let $(t_0,x)\in\cI$ be fixed. By \eqref{eq_cost_expansion} and the condition (iii), for any $u\in\cU(t_0,T)$,
\begin{align*}
	J^0(t_0,x;u)&=J^0(t_0,x,0)+J^0(t_0,0;u)+2\langle\cD_uJ^0(t_0,x;0),u\rangle_{\cU(t_0,T)}\\
	&\geq J^0(t_0,x;0)+\lambda\|u\|^2_{\cU(t_0,T)}-2|\langle\cD_uJ^0(t_0,x;0),u\rangle_{\cU(t_0,T)}|\\
	&\geq J^0(t_0,x;0)-\frac{1}{\lambda}\|\cD_uJ^0(t_0,x;0)\|^2_{\cU(t_0,T)}.
\end{align*}
By virtue of the estimate \eqref{eq_closed-loop_estimate} of the state process and \cref{rem_expression_cost_functional} for the representation of $\cD_uJ$, it is easy to see that
\begin{equation*}
	|J^0(t_0,x;0)|+\|\cD_uJ^0(t_0,x;0)\|^2_{\cU(t_0,T)}\leq M\int^T_{t_0}|x(t)|^2\,\diff t
\end{equation*}
for some constant $M>0$ which is independent of $(t_0,x)$. Thus, by letting $\alpha:=-\max\{1,1/\lambda\}M$, we have $J^0(t_0,x;u)\geq\alpha\int^T_{t_0}|x(t)|^2\,\diff t$ for any $(t_0,x)\in\cI$ and $u\in\cU(t_0,T)$. In particular, for any $(\Xi,\Gamma)\in L^\infty(0,T;\bR^{\ell\times d})\times L^2(\triangle_2(0,T);\bR^{\ell\times d})$ and any $(t_0,x)\in\cI$,
\begin{equation*}
	J^0(t_0,x;(\Xi,\Gamma,0)^0[t_0,x])\geq\alpha\int^T_{t_0}|x(t)|^2\,\diff t.
\end{equation*}
By \cref{theo_representation_cost}, the left-hand side is equal to
\begin{equation*}
	\int^T_{t_0}\langle P^{(1)}(t)x(t),x(t)\rangle\,\diff t+\int^T_{t_0}\!\!\int^T_{t_0}\langle P^{(2)}(s_1,s_2,t_0)x(s_2),x(s_1)\rangle\,\diff s_1\!\,\diff s_2,
\end{equation*}
where $P=(P^{(1)},P^{(2)})\in\Pi(0,T)$ is the solution to the Lyapunov--Volterra equation \eqref{eq_Lyapunov--Volterra_(Xi,Gamma)}. Thus, the estimate \eqref{eq_estimate_alpha} holds. This completes the proof.
\end{proof}

The following corollary gives a simple sufficient condition for the uniform convexity of the cost functional.


\begin{cor}\label{cor_standard_condition}
Assume that the following \emph{standard condition} holds for some $\lambda>0$:
\begin{equation}\label{eq_standard_condition}
	R(t)\geq\lambda I_\ell\ \text{and}\ Q(t)-S(t)^\top R(t)^{-1}S(t)\geq0\ \text{for a.e.}\ t\in(0,T).
\end{equation}
Then the cost functional $\cU(0,T)\ni u\mapsto J^0(0,0;u)$ is uniformly convex.
\end{cor}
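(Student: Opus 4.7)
The plan is to reduce the claim to condition (i)' of \cref{lemm_uniform_convex} by producing a constant $\lambda'>0$ such that $J^0(0,0;u)\geq\lambda'\bE[\int^T_0|u(t)|^2\,\diff t]$ for every $u\in\cU(0,T)$. Fix $u\in\cU(0,T)$ and write $X_0$ for the corresponding state. Since $R(t)\geq\lambda I_\ell$, the inverse $R^{-1}$ exists and lies in $L^\infty(0,T;\bS^\ell)$; hence also $R^{-1}S\in L^\infty(0,T;\bR^{\ell\times d})$. The first step is the classical completion of squares:
\begin{equation*}
\langle Q X_0,X_0\rangle+2\langle SX_0,u\rangle+\langle Ru,u\rangle
=\bigl\langle(Q-S^\top R^{-1}S)X_0,X_0\bigr\rangle+\bigl\langle R(u+R^{-1}SX_0),u+R^{-1}SX_0\bigr\rangle.
\end{equation*}
Using $Q-S^\top R^{-1}S\geq0$ and $R\geq\lambda I_\ell$ from the standard condition, this yields
\begin{equation*}
J^0(0,0;u)\geq\lambda\,\bE\Bigl[\int^T_0|v(t)|^2\,\diff t\Bigr],\qquad v:=u+R^{-1}SX_0\in\cU(0,T).
\end{equation*}

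The second and main step is to dominate $\|u\|_{\cU(0,T)}$ by $\|v\|_{\cU(0,T)}$. Substituting $u=v-R^{-1}SX_0$ into the homogeneous controlled SVIE \eqref{eq_state_0} with zero free term, the process $X_0$ satisfies the SVIE
\begin{equation*}
X_0(t)=\int^t_0\{\widetilde A(t,s)X_0(s)+B(t,s)v(s)\}\,\diff s+\int^t_0\{\widetilde C(t,s)X_0(s)+D(t,s)v(s)\}\,\diff W(s),\ t\in(0,T),
\end{equation*}
where $\widetilde A:=A-B\triangleright(R^{-1}S)\in L^2(\triangle_2(0,T);\bR^{d\times d})$ and $\widetilde C:=C-D\triangleright(R^{-1}S)\in\sL^2(\triangle_2(0,T);\bR^{d\times d})$, because $R^{-1}S\in L^\infty$. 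This is exactly the SVIE handled by \cref{theo_SVIE} (applied with $\Xi=\Gamma=0$, zero free term and inhomogeneity, and control $v$), which delivers a constant $K>0$ depending only on $A,B,C,D,R,S,\lambda$ with
\begin{equation*}
\|X_0\|_{L^2_\bF(0,T)}\leq K\|v\|_{L^2_\bF(0,T)}.
\end{equation*}
Combining with $u=v-R^{-1}SX_0$ and the triangle inequality gives $\|u\|_{L^2_\bF(0,T)}\leq(1+K\|R^{-1}S\|_{L^\infty})\|v\|_{L^2_\bF(0,T)}$. Setting $\lambda':=\lambda\,(1+K\|R^{-1}S\|_{L^\infty})^{-2}$, we conclude $J^0(0,0;u)\geq\lambda'\|u\|^2_{L^2_\bF(0,T)}$, which is condition (i)' of \cref{lemm_uniform_convex}.

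The only delicate point is the second step: a naive bound of the form $|u|^2\leq 2|v|^2+2|R^{-1}SX_0|^2$ together with the a priori estimate $\|X_0\|\leq K\|u\|$ produces the wrong constants and fails to close. The key observation is that, after the change of variable $v=u+R^{-1}SX_0$, the state $X_0$ can be viewed as the solution of a \emph{new} SVIE driven by $v$ (with modified but still admissible coefficients $\widetilde A,\widetilde C$), so that the SVIE well-posedness estimate of \cref{theo_SVIE} yields $\|X_0\|\leq K\|v\|$, and the argument closes cleanly.
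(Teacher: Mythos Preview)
Your proof is correct and rests on the same core idea as the paper: the completion of squares with $v=u+R^{-1}SX_0$, which under the standard condition gives $J^0(0,0;u)\geq\lambda\|v\|^2_{\cU(0,T)}$. The difference is only in how the argument is closed. The paper observes that this inequality is exactly condition (ii)' of \cref{lemm_uniform_convex} for the strategy $(\Xi,\Gamma)=(-R^{-1}S,0)$, since $u=(-R^{-1}S,0,v)^0[0,0]$, and then invokes the already-proved equivalence (ii)'\,$\Leftrightarrow$\,(i)' (which in turn relies on \cref{lemm_strategy_bijective} and the inverse mapping theorem). You instead target (i)' directly and carry out that implication by hand: rewriting the state equation with coefficients $\widetilde A,\widetilde C$ driven by $v$ and applying the a priori estimate of \cref{theo_SVIE} to get $\|X_0\|\leq K\|v\|$. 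Your route yields an explicit constant $\lambda'$, whereas the paper's route is shorter but relies on the abstract inverse mapping theorem; substantively, the two are the same argument.
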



\begin{proof}
For any $v\in\cU(0,T)$, let $(X,\Theta,u)\in L^2_\bF(0,T;\bR^d)\times L^2_{\bF,\mathrm{c}}(\triangle_2(0,T);\bR^d)\times\cU(0,T)$ be the causal feedback solution of the homogeneous controlled SVIE \eqref{eq_state_0} at the zero input condition $(t_0,x)=(0,0)\in\cI$ corresponding to the causal feedback strategy $(-R^{-1}S,0,v)\in\cS(0,T)$. Then we have
\begin{align*}
	&J^0(0,0;(-R^{-1}S,0,v)^0[0,0])\\
	&=\bE\Bigl[\int^T_0\left\langle\begin{pmatrix}Q(t)&S(t)^\top\\S(t)&R(t)\end{pmatrix}\begin{pmatrix}X(t)\\u(t)\end{pmatrix},\begin{pmatrix}X(t)\\u(t)\end{pmatrix}\right\rangle\,\diff t\Bigr]\\
	&=\bE\Bigl[\int^T_0\left\langle\begin{pmatrix}Q(t)&S(t)^\top\\S(t)&R(t)\end{pmatrix}\begin{pmatrix}X(t)\\-R(t)^{-1}S(t)X(t)+v(t)\end{pmatrix},\begin{pmatrix}X(t)\\-R(t)^{-1}S(t)X(t)+v(t)\end{pmatrix}\right\rangle\,\diff t\Bigr]\\
	&=\bE\Bigl[\int^T_0\{\langle(Q(t)-S(t)^\top R(t)^{-1}S(t))X(t),X(t)\rangle+\langle R(t)v(t),v(t)\rangle\}\,\diff t\Bigr]\\
	&\geq\lambda\bE\Bigl[\int^T_0|v(t)|^2\,\diff t\Bigr].
\end{align*}
Thus, the condition (ii)' in \cref{lemm_uniform_convex} holds for $(\Xi,\Gamma)=(-R^{-1}S,0)$, and hence the cost functional $\cU(0,T)\ni u\mapsto J^0(0,0;u)$ is uniformly convex.
\end{proof}


\begin{rem}
In the works of \cite{AbMiPh21,AbMiPh21+,WaYoZh22} (where $S=0$), the standard condition \eqref{eq_standard_condition} is a priori assumed.
\end{rem}

The following is the main theorem of this section.


\begin{theo}\label{theo_strongly_regular_solvability}
The following are equivalent:
\begin{itemize}
\item[(i)]
The cost functional $\cU(0,T)\ni u\mapsto J^0(0,0;u)$ is uniformly convex;
\item[(iv)]
Riccati--Volterra equation \eqref{eq_Riccati--Volterra} admits a strongly regular solution.
\end{itemize}
\end{theo}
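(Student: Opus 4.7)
My plan is to prove the two implications separately, with the forward direction (iv)$\Rightarrow$(i) being a direct consequence of the representation formula and the reverse direction (i)$\Rightarrow$(iv) requiring a method of continuity argument.

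For (iv)$\Rightarrow$(i), suppose $P = (P^{(1)}, P^{(2)}) \in \Pi(0, T)$ is a strongly regular solution with constant $\lambda > 0$, and define $(\check{\Xi}, \check{\Gamma})$ via \eqref{eq_check_Xi}--\eqref{eq_check_Gamma}. Substituting these formulas back into the Riccati--Volterra equation \eqref{eq_Riccati--Volterra} shows that $P$ also satisfies the Lyapunov--Volterra equation \eqref{eq_Lyapunov--Volterra_(Xi,Gamma)} for $(\Xi, \Gamma) = (\check{\Xi}, \check{\Gamma})$. In the homogeneous problem with zero input and $v = 0$, all inhomogeneous data of the Type-II EBSVIE \eqref{eq_Type-II_EBSVIE_(Xi,Gamma,v)} vanish, hence $(\eta, \zeta) = (0, 0)$ and $\kappa \equiv 0$. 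Theorem \ref{theo_representation_cost} with $(t_0, x) = (0, 0)$, $v = 0$ and $\mu = 1$ then collapses to
\begin{equation*}
    J^0(0, 0; (\check{\Xi}, \check{\Gamma}, \tilde{v})^0[0, 0])
    = \bE\Bigl[\int_0^T \langle (R(t) + (D^\top \lint P \rint D)(t)) \tilde{v}(t), \tilde{v}(t)\rangle \, \diff t\Bigr]
    \geq \lambda \bE\Bigl[\int_0^T |\tilde{v}(t)|^2 \, \diff t\Bigr]
\end{equation*}
for every $\tilde{v} \in \cU(0, T)$, which is precisely condition (ii)$'$ of \cref{lemm_uniform_convex}, yielding (i).

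For the harder direction (i)$\Rightarrow$(iv), I would use a method of continuity. Define a one-parameter family of coefficients by $A_\tau = \tau A$, $B_\tau = \tau B$, $C_\tau = \tau C$, $D_\tau = \tau D$, $Q_\tau = (1-\tau) I_d + \tau Q$, $R_\tau = (1-\tau) I_\ell + \tau R$, $S_\tau = \tau S$ for $\tau \in [0, 1]$, so that $\tau = 1$ is the original problem and $\tau = 0$ is a trivial decoupled problem whose Riccati--Volterra equation admits the strongly regular solution $P_0 = (I_d, 0)$. Let $\Sigma \subset [0, 1]$ be the set of parameters for which the corresponding Riccati--Volterra equation admits a strongly regular solution. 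Then $0 \in \Sigma$; openness of $\Sigma$ follows from an implicit function theorem applied in the Banach space $\Pi(0, T)$, since the linearization of the Riccati--Volterra system at any strongly regular $P_\tau$ is a Lyapunov--Volterra equation, which is well-posed by \cref{theo_Lyapunov--Volterra}. The last step is to show $1 \in \Sigma$ via closedness.

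For closedness, one first checks that the uniform convexity of $J^0$ at $\tau = 1$ propagates to a uniform (in $\tau$) convexity constant for every $J^0_\tau$; this is done via a convexity/interpolation argument between $\tau = 1$ (where (i) holds) and $\tau = 0$ (where the trivial problem is strongly convex in $u$ by construction) combined with \cref{lemm_uniform_convex}(iii). Given a uniform convexity constant $\lambda_0$ for every $\tau \in [0, 1]$, the last assertion of \cref{lemm_uniform_convex}, applied with $(\Xi, \Gamma) = (\check{\Xi}_{\tau_n}, \check{\Gamma}_{\tau_n})$, yields both $R_{\tau_n} + D_{\tau_n}^\top \lint P_{\tau_n} \rint D_{\tau_n} \geq \lambda_0 I_\ell$ and the quadratic lower bound \eqref{eq_estimate_alpha}, while a matching upper bound is obtained from \cref{theo_representation_cost} applied with the trivial feedback $(\Xi, \Gamma, v) = (0, 0, 0)$. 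Together these sandwich $\|P_{\tau_n}\|_{\Pi(0, T)}$ uniformly, allowing passage to the limit $\tau_n \to \tau_\infty$ in the integral form of \eqref{eq_Riccati--Volterra} along a weakly convergent subsequence, with preservation of the strong regularity constant. I expect the main obstacle to lie in extracting the uniform $\Pi(0, T)$-bound on $\{P_{\tau_n}\}$, since the norm controls three ingredients $P^{(1)}$, $P^{(2)}(\cdot,\cdot,s_1 \wedge s_2)$ and $\dot P^{(2)}$ in different integrability classes, and obtaining $L^{2,2,1}$-control of $\dot P^{(2)}$ requires carefully exploiting the Riccati structure together with the uniform positivity of $R_{\tau_n} + D_{\tau_n}^\top \lint P_{\tau_n} \rint D_{\tau_n}$ to invert and bound the quadratic terms near the diagonal $t \uparrow s_1 \wedge s_2$.
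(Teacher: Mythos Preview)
Your argument for (iv)$\Rightarrow$(i) is correct and matches the paper's proof essentially verbatim.

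For (i)$\Rightarrow$(iv) your approach is genuinely different from the paper's. The paper does \emph{not} use a method of continuity; instead it runs a Kleinman-type iteration: starting from $(\Xi_1,\Gamma_1)=(0,0)$, it solves the Lyapunov--Volterra equation \eqref{eq_Lyapunov--Volterra_(Xi,Gamma)} to get $P_1$, defines $(\Xi_2,\Gamma_2)$ from $P_1$ via the Riccati feedback formulas, solves again to get $P_2$, and so on. The key is that the difference $\bar P_i=P_i-P_{i+1}$ itself satisfies a Lyapunov--Volterra equation with nonnegative inhomogeneity $\bar\Xi_i^\top(R+D^\top\lint P_i\rint D)\bar\Xi_i$, etc., so by the representation formula \eqref{eq_quadratic_representation} the self-adjoint operators $\cP^{t_0}_i$ form a monotone decreasing sequence bounded below by $\alpha I$. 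Strong operator convergence then follows automatically, and the limit is shown to solve \eqref{eq_Riccati--Volterra}.

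Your homotopy route has a genuine gap at the step you label ``convexity/interpolation''. With your scaling $A_\tau=\tau A,\dots,D_\tau=\tau D$, the state $X_\tau$ depends \emph{nonlinearly} on $\tau$ (the resolvent of the Volterra kernel does not scale linearly), so $J^0_\tau(0,0;u)$ is not a convex combination of $J^0_0(0,0;u)$ and $J^0_1(0,0;u)$; it is a polynomial of degree at least three in $\tau$ even when $A=C=0$. There is therefore no a priori reason that uniform convexity at $\tau=0$ and $\tau=1$ forces uniform convexity with a \emph{uniform} constant on all of $[0,1]$, and \cref{lemm_uniform_convex}(iii) says nothing across different problems. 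Without this, your closedness argument cannot start. A second, independent difficulty is the one you flag yourself: the two-sided bound on the quadratic form $\langle\cP^{t_0}_\tau x,x\rangle$ controls only the operator norms $\|\cP^{t_0}_\tau\|_{\mathrm{op}}$, not the full $\|P_\tau\|_{\Pi(0,T)}$; in particular it gives no control on $\|\dot P^{(2)}_\tau\|_{L^{2,2,1}}$, and weak compactness in $\Pi(0,T)$ is not available from these estimates alone. The paper sidesteps both issues because monotonicity of $\cP^{t_0}_i$ gives strong convergence of $P_i\rint M$ and $M^\top\lint P_i\rint N$ directly, from which convergence of all the terms in \eqref{eq_Lyapunov--Volterra_i} (and hence of $P_i$ in $\Pi(0,T)$) is read off.
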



\begin{proof}
\underline{(iv)\,$\Rightarrow$\,(i)}: Suppose that the Riccati--Volterra equation \eqref{eq_Riccati--Volterra} has a strongly regular solution $P=(P^{(1)},P^{(2)})\in\Pi(0,T)$. By the definition, there exists a constant $\lambda>0$ such that $R(t)+(D^\top\lint P\rint D)(t)\geq\lambda I_\ell$ for a.e.\ $t\in(0,T)$. Define $(\check{\Xi},\check{\Gamma})\in L^\infty(0,T;\bR^{\ell\times d})\times L^2(\triangle_2(0,T);\bR^{\ell\times d})$ by
\begin{align*}
	&\check{\Xi}(t)=-(R(t)\mathalpha{+}(D^\top\lint P\rint D)(t))^{-1}(S(t)\mathalpha{+}(D^\top\lint P\rint C)(t)),\ t\in(0,T),\\
	&\check{\Gamma}(s,t)=-(R(t)\mathalpha{+}(D^\top\lint P\rint D)(t))^{-1}(B^\top\lint P)(s,t),\ (s,t)\in\triangle_2(0,T).
\end{align*}
Then it is easy to see that $P$ solves the Lyapunov--Volterra equation \eqref{eq_Lyapunov--Volterra_(Xi,Gamma)} with $(\Xi,\Gamma)=(\check{\Xi},\check{\Gamma})$. By \cref{theo_representation_cost}, for any $\tilde{v}\in\cU(0,T)$, we have
\begin{align*}
	&J^0(0,0;(\check{\Xi},\check{\Gamma},\tilde{v})^0[0,0])\\
	&=\bE\Bigl[\int^T_{0}\Bigl\{\langle(R(t)+(D^\top\lint P\rint D)(t))\tilde{v}(t),\tilde{v}(t)\rangle\\
	&\hspace{2cm}+2\Bigl\langle(S(t)+(D^\top\lint P\rint C)(t)+(R(t)+(D^\top\lint P\rint D)(t))\check{\Xi}(t))\tilde{X}^{0,0}_0(t)\\
	&\hspace{3cm}+\int^T_t((B^\top\lint P)(s,t)+(R(t)+(D^\top\lint P\rint D)(t))\check{\Gamma}(s,t))\tilde{\Theta}^{0,0}_0(s,t)\,\diff s,\tilde{v}(t)\Bigr\rangle\Bigr\}\,\diff t\Bigr]\\
	&=\bE\Bigl[\int^T_{0}\langle(R(t)+(D^\top\lint P\rint D)(t))\tilde{v}(t),\tilde{v}(t)\rangle\,\diff t\Bigr]\\
	&\geq\lambda\bE\Bigl[\int^T_{0}|\tilde{v}(t)|^2\,\diff t\Bigr],
\end{align*}
where $(\tilde{X}^{0,0}_0,\tilde{\Theta}^{0,0}_0)$ is the causal feedback solution to the homogeneous controlled SVIE \eqref{eq_state_0} with respect to the zero input condition $(t_0,x)=(0,0)\in\cI$ and the causal feedback strategy $(\check{\Xi},\check{\Gamma},\tilde{v})\in\cS(0,T)$. Thus, the condition (ii)' in \cref{lemm_uniform_convex} holds for $(\Xi,\Gamma)=(\check{\Xi},\check{\Gamma})$, and hence the cost functional $\cU(0,T)\ni u\mapsto J^0(0,0;u)$ is uniformly convex.

\underline{(i)\,$\Rightarrow$\,(iv)}: Suppose that the cost functional $\cU(0,T)\ni u\mapsto J^0(0,0;u)$ is uniformly convex. Let $\lambda>0$ and $\alpha\in\bR$ be the constants appearing in the last assertion of \cref{lemm_uniform_convex}. Recall the definitions \eqref{eq_Lyapunov--Volterra_coefficients} and \eqref{eq_Lyapunov--Volterra_inhomogeneous} of $(F^{(1)}[\Xi;P],F^{(2)}[\Xi,\Gamma;P],F^{(3)}[\Gamma;P])$ and $(Q^{(1)}[\Xi],Q^{(2)}[\Xi,\Gamma],Q^{(3)}[\Gamma])$, respectively. We define $(\Xi_i,\Gamma_i)\in L^\infty(0,T;\bR^{\ell\times d})\times L^2(\triangle_2(0,T);\bR^{\ell\times d})$ and $P_i=(P^{(1)}_i,P^{(2)}_i)\in\Pi(0,T)$ with $i\in\bN$ by the following induction: For $i=1$, let $\Xi_1:=0$ and $\Gamma_1:=0$, and let $P_1=(P^{(1)}_1,P^{(2)}_1)\in\Pi(0,T)$ be the solution to the Lyapunov--Volterra equation \eqref{eq_Lyapunov--Volterra_(Xi,Gamma)} with $(\Xi,\Gamma)=(\Xi_1,\Gamma_1)=(0,0)$:
\begin{equation*}
	\begin{dcases}
	P^{(1)}_1(t)=(C^\top\lint P_1\rint C)(t)+Q(t),\ t\in(0,T),\\
	P^{(2)}_1(s,t,t)=P^{(2)}_1(t,s,t)^\top=(P_1\rint A)(s,t),\ (s,t)\in\triangle_2(0,T),\\
	\dot{P}^{(2)}_1(s_1,s_2,t)=0,\ (s_1,s_2,t)\in\square_3(0,T).
	\end{dcases}
\end{equation*}
For $i\geq2$, define
\begin{align*}
	&\Xi_i(t):=-(R(t)+(D^\top\lint P_{i-1}\rint D)(t))^{-1}(S(t)+(D^\top\lint P_{i-1}\rint C)(t)),\ t\in(0,T),\\
	&\Gamma_i(s,t):=-(R(t)+(D^\top\lint P_{i-1}\rint D)(t))^{-1}(B^\top\lint P_{i-1})(s,t),\ (s,t)\in\triangle_2(0,T),
\end{align*}
and let $P_i=(P^{(1)}_i,P^{(2)}_i)\in\Pi(0,T)$ be the solution to the Lyapunov--Volterra equation \eqref{eq_Lyapunov--Volterra_(Xi,Gamma)} with $(\Xi,\Gamma)=(\Xi_i,\Gamma_i)$:
\begin{equation}\label{eq_Lyapunov--Volterra_i}
	\begin{dcases}
	P^{(1)}_i(t)=F^{(1)}[\Xi_i;P_i](t)+Q^{(1)}[\Xi_i](t),\ t\in(0,T),\\
	P^{(2)}_i(s,t,t)=P^{(2)}_i(t,s,t)^\top=F^{(2)}[\Xi_i,\Gamma_i;P_i](s,t)+Q^{(2)}[\Xi_i,\Gamma_i](s,t),\ (s,t)\in\triangle_2(0,T),\\
	\dot{P}^{(2)}_i(s_1,s_2,t)+F^{(3)}[\Gamma_i;P_i](s_1,s_2,t)+Q^{(3)}[\Gamma_i](s_1,s_2,t)=0,\ (s_1,s_2,t)\in\square_3(0,T).
	\end{dcases}
\end{equation}
We observe that the above induction is well-defined by \cref{theo_Lyapunov--Volterra}, together with the last assertion in \cref{lemm_uniform_convex}. Furthermore, for any $i\in\bN$, we have $R(t)+(D^\top\lint P_i\rint D)(t)\geq\lambda I_\ell$ for a.e.\ $t\in(0,T)$, and
\begin{equation*}
	\int^T_{t_0}\langle P^{(1)}_i(t)x(t),x(t)\rangle\,\diff t+\int^T_{t_0}\!\!\int^T_{t_0}\langle P^{(2)}_i(s_1,s_2,t_0)x(s_2),x(s_1)\rangle\,\diff s_1\!\,\diff s_2\geq\alpha\int^T_{t_0}|x(t)|^2\,\diff t
\end{equation*}
for any $(t_0,x)\in\cI$. We shall show that $\{P_i\}_{i\in\bN}$ converges (in a suitable sense) to the strongly regular solution of the Riccati--Volterra equation \eqref{eq_Riccati--Volterra}.

For each $i\in\bN$, define $\bar{P}_i=(\bar{P}^{(1)}_i,\bar{P}^{(2)}_i)\in\Pi(0,T)$ and $(\bar{\Xi}_i,\bar{\Gamma}_i)\in L^\infty(0,T;\bR^{\ell\times d})\times L^2(\triangle_2(0,T);\bR^{\ell\times d})$ by
\begin{equation*}
	\bar{P}^{(1)}_i:=P^{(1)}_i-P^{(1)}_{i+1},\ \bar{P}^{(2)}_i:=P^{(2)}_i-P^{(2)}_{i+1},\ \bar{\Xi}_i:=\Xi_i-\Xi_{i+1},\ \bar{\Gamma}_i:=\Gamma_i-\Gamma_{i+1}.
\end{equation*}
Noting that $P\mapsto F^{(1)}[\Xi;P]$ is linear, we have
\begin{align*}
	\bar{P}^{(1)}_i&=F^{(1)}[\Xi_i;P_i]+Q^{(1)}[\Xi_i]-F^{(1)}[\Xi_{i+1};P_{i+1}]-Q^{(1)}[\Xi_{i+1}]\\
	&=F^{(1)}[\Xi_{i+1};\bar{P}_i]+F^{(1)}[\Xi_i;P_i]+Q^{(1)}[\Xi_i]-F^{(1)}[\Xi_{i+1};P_i]-Q^{(1)}[\Xi_{i+1}]\\
	&=F^{(1)}[\Xi_{i+1};\bar{P}_i]+\bar{\Xi}_i^\top(S+D^\top\lint P_i\rint C)+(S^\top+C^\top\lint P_i\rint D)\bar{\Xi}_i\\
	&\hspace{0.5cm}+\bar{\Xi}^\top_i(R+D^\top\lint P_i\rint D)\Xi_{i+1}+\Xi^\top_{i+1}(R+D^\top\lint P_i\rint D)\bar{\Xi}_i+\bar{\Xi}^\top_i(R+D^\top\lint P_i\rint D)\bar{\Xi}_i\\
	&=F^{(1)}[\Xi_{i+1};\bar{P}_i]+\bar{\Xi}^\top_i(R+D^\top\lint P_i\rint D)\bar{\Xi}_i,
\end{align*}
where the last equality follows from the definition of $\Xi_{i+1}$. By similar calculations for $\bar{P}^{(2)}_i$ and $\dot{\bar{P}}^{(2)}_i$, we see that $\bar{P}_i=(\bar{P}^{(1)}_i,\bar{P}^{(2)}_i)\in\Pi(0,T)$ solves the following Lyapunov--Volterra equation:
\begin{equation*}
	\begin{dcases}
	\bar{P}^{(1)}_i(t)=F^{(1)}[\Xi_{i+1};\bar{P}_i](t)+\bar{Q}^{(1)}_i(t),\ t\in(0,T),\\
	\bar{P}^{(2)}_i(s,t,t)=\bar{P}^{(2)}_i(t,s,t)^\top=F^{(2)}[\Xi_{i+1},\Gamma_{i+1};\bar{P}_i](s,t)+\bar{Q}^{(2)}_i(s,t),\ (s,t)\in\triangle_2(0,T),\\
	\dot{\bar{P}}^{(2)}_i(s_1,s_2,t)+F^{(3)}[\Gamma_{i+1};\bar{P}_i](s_1,s_2,t)+\bar{Q}^{(3)}_i(s_1,s_2,t)=0,\ (s_1,s_2,t)\in\square_3(0,T),
	\end{dcases}
\end{equation*}
where
\begin{align*}
	&\bar{Q}^{(1)}_i(t):=\bar{\Xi}_i(t)^\top(R(t)+(D^\top\lint P_i\rint D)(t))\bar{\Xi}_i(t),\ t\in(0,T),\\
	&\bar{Q}^{(2)}_i(s,t):=\bar{\Gamma}_i(s,t)^\top(R(t)+(D^\top\lint P_i\rint D)(t))\bar{\Xi}_i(t),\ (s,t)\in\triangle_2(0,T),\\
	&\bar{Q}^{(3)}_i(s_1,s_2,t):=\bar{\Gamma}_i(s_1,t)^\top(R(t)+(D^\top\lint P_i\rint D)(t))\bar{\Gamma}_i(s_2,t),\ (s_1,s_2,t)\in\square_3(0,T).
\end{align*}
Therefore, by the representation formula (\cref{theo_Lyapunov--Volterra}), for any $(t_0,x)\in\cI$, we have
\begin{align*}
	&\int^T_{t_0}\langle \bar{P}^{(1)}_i(t)x(t),x(t)\rangle\,\diff t+\int^T_{t_0}\!\!\int^T_{t_0}\langle \bar{P}^{(2)}_i(s_1,s_2,t_0)x(s_2),x(s_1)\rangle\,\diff s_1\!\,\diff s_2\\
	&=\bE\Bigl[\int^T_{t_0}\Bigl\{\langle \bar{Q}^{(1)}_i(t)X^{t_0,x}_{i+1}(t),X^{t_0,x}_{i+1}(t)\rangle+2\int^T_t\langle \bar{Q}^{(2)}_i(s,t)X^{t_0,x}_{i+1}(t),\Theta^{t_0,x}_{i+1}(s,t)\rangle\,\diff s\\
	&\hspace{4cm}+\int^T_t\!\!\int^T_t\langle \bar{Q}^{(3)}_i(s_1,s_2,t)\Theta^{t_0,x}_{i+1}(s_2,t),\Theta^{t_0,x}_{i+1}(s_1,t)\rangle\,\diff s_1\!\,\diff s_2\Bigr\}\,\diff t\Bigr],
\end{align*}
where $(X^{t_0,x}_{i+1},\Theta^{t_0,x}_{i+1})$ is the causal feedback solution to the homogeneous controlled SVIE \eqref{eq_state_0} at $(t_0,x)\in\cI$ corresponding to the causal feedback strategy $(\Xi_{i+1},\Gamma_{i+1},0)\in\cS(0,T)$. From the definition of $(\bar{Q}^{(1)}_i,\bar{Q}^{(2)}_i,\bar{Q}^{(3)}_i)$ and the fact that $R(t)+(D^\top\lint P_i\rint D)(t)\geq\lambda I_\ell$ for a.e.\ $t\in(0,T)$, we get
\begin{align*}
	&\int^T_{t_0}\langle \bar{P}^{(1)}_i(t)x(t),x(t)\rangle\,\diff t+\int^T_{t_0}\!\!\int^T_{t_0}\langle \bar{P}^{(2)}_i(s_1,s_2,t_0)x(s_2),x(s_1)\rangle\,\diff s_1\!\,\diff s_2\\
	&=\bE\Bigl[\int^T_{t_0}\Bigl\langle(R(t)+(D^\top\lint P_i\rint D)(t))\Bigl(\bar{\Xi}_i(t)X^{t_0,x}_{i+1}(t)+\int^T_t\bar{\Gamma}_i(s,t)\Theta^{t_0,x}_{i+1}(s,t)\,\diff s\Bigr),\\
	&\hspace{6cm}\bar{\Xi}_i(t)X^{t_0,x}_{i+1}(t)+\int^T_t\bar{\Gamma}_i(s,t)\Theta^{t_0,x}_{i+1}(s,t)\,\diff s\Bigr\rangle\,\diff t\Bigr]\\
	&\geq0.
\end{align*}
Therefore, for any $(t_0,x)\in\cI$ and any $i\in\bN$, the following chain of inequalities holds:
\begin{equation}\label{eq_P_i_inequality}
\begin{split}
	&\hspace{0.5cm}\int^T_{t_0}\langle P^{(1)}_1(t)x(t),x(t)\rangle\,\diff t+\int^T_{t_0}\!\!\int^T_{t_0}\langle P^{(2)}_1(s_1,s_2,t_0)x(s_2),x(s_1)\rangle\,\diff s_1\!\,\diff s_2\\
	&\geq\int^T_{t_0}\langle P^{(1)}_i(t)x(t),x(t)\rangle\,\diff t+\int^T_{t_0}\!\!\int^T_{t_0}\langle P^{(2)}_i(s_1,s_2,t_0)x(s_2),x(s_1)\rangle\,\diff s_1\!\,\diff s_2\\
	&\geq\int^T_{t_0}\langle P^{(1)}_{i+1}(t)x(t),x(t)\rangle\,\diff t+\int^T_{t_0}\!\!\int^T_{t_0}\langle P^{(2)}_{i+1}(s_1,s_2,t_0)x(s_2),x(s_1)\rangle\,\diff s_1\!\,\diff s_2\\
	&\geq\alpha\int^T_{t_0}|x(t)|^2\,\diff t.
\end{split}
\end{equation}

Let $t_0\in[0,T)$ be fixed. For each $i\in\bN$, define a bounded linear operator $\cP^{t_0}_i$ on the Hilbert space $L^2(t_0,T;\bR^d)$ by
\begin{equation*}
	(\cP^{t_0}_ix)(t):=P^{(1)}_i(t)x(t)+\int^T_{t_0}P^{(2)}_i(t,s,t_0)x(s)\,\diff s,\ t\in(t_0,T),
\end{equation*}
for $x\in L^2(t_0,T;\bR^d)$. By \cref{lemm_self-adjoint}, $\cP^{t_0}_i$ is self-adjoint. The chain of inequalities \eqref{eq_P_i_inequality} implies that
\begin{equation*}
	\cP^{t_0}_1\geq\cP^{t_0}_i\geq\cP^{t_0}_{i+1}\geq\alpha I_{L^2(t_0,T;\bR^d)}
\end{equation*}
for any $t_0\in[0,T)$ and $i\in\bN$, where $I_{L^2(t_0,T;\bR^d)}$ denotes the identity operator on $L^2(t_0,T;\bR^d)$. Thus, for each $t_0\in[0,T)$, the sequence $\{\cP^{t_0}_i\}_{i\in\bN}$ is a bounded and monotone sequence of self-adjoint operators, and hence it is strongly convergent. In other words, for any $x\in L^2(t_0,T;\bR^d)$, the sequence $\{\cP^{t_0}_ix\}_{i\in\bN}$ is a convergent sequence on $L^2(t_0,T;\bR^d)$. Furthermore, for each $t_0\in[0,T)$ and $i\in\bN$, the operator norm $\|\cP^{t_0}_i\|_\op$ of $\cP^{t_0}_i$ is estimated as $\|\cP^{t_0}_i\|_\op\leq\max\{|\alpha|,\|\cP^{t_0}_1\|_\op\}$. Noting that
\begin{align*}
	\|\cP^{t_0}_1\|_\op&=\sup_{\substack{x\in L^2(t_0,T;\bR^d)\\\|x\|_{L^2(t_0,T)}\leq1}}|\langle\cP^{t_0}_1x,x\rangle_{L^2(t_0,T)}|\\
	&\leq\underset{t\in(0,T)}{\mathrm{ess\,sup}}|P^{(1)}_1(t)|+\Bigl(\int^T_0\!\!\int^T_0\sup_{t\in[0,s_1\wedge s_2]}|P^{(2)}_1(s_1,s_2,t)|^2\,\diff s_1\!\,\diff s_2\Bigr)^{1/2}<\infty,
\end{align*}
we obtain the uniform boundedness:
\begin{equation*}
	\sup_{t_0\in[0,T),i\in\bN}\|\cP^{t_0}_i\|_\op<\infty.
\end{equation*}

The above observations and the dominated convergence theorem yield the following assertions:
\begin{itemize}
\item
For any $M=(M_1,\dots,M_{d_1})\in L^2(\triangle_2(0,T);\bR^{d\times d_1})$ with $d_1\in\bN$, the sequence
\begin{equation*}
	(P_i\rint M)(s,t)=((\cP^t_iM_1(\cdot,t))(s),\dots,(\cP^t_iM_{d_1}(\cdot,t))(s)),\ (s,t)\in\triangle_2(0,T),\ i\in\bN,
\end{equation*}
converges in $L^2(\triangle_2(0,T);\bR^{d\times d_1})$. Similarly, the sequence $M^\top\lint P_i=(P_i\rint M)^\top$, $i\in\bN$, converges in $L^2(\triangle_2(0,T);\bR^{d_1\times d})$.
\item
For any $M=(M_1,\dots,M_{d_1})\in \sL^2(\triangle_2(0,T);\bR^{d\times d_1})$ and $N=(N_1,\dots,N_{d_2})\in \sL^2(\triangle_2(0,T);\bR^{d\times d_2})$ with $d_1,d_2\in\bN$, the sequence
\begin{equation*}
	(M^\top\lint P_i\rint N)(t)=(\langle M_k(\cdot,t),\cP^t_i N_\ell(\cdot,t)\rangle_{L^2(t,T)})_{k,\ell},\ t\in(0,T),\ i\in\bN,
\end{equation*}
converges for a.e.\ $t\in(0,T)$. Furthermore, $\sup_{i\in\bN}\|M^\top\lint P_i\rint N\|_{L^\infty(0,T)}<\infty$.
\end{itemize}
Therefore, noting that $R(t)+(D^\top\lint P_i\rint D)(t)\geq\lambda I_\ell$ for a.e.\ $t\in(0,T)$ and any $i\in\bN$, we have the following assertions:
\begin{itemize}
\item
The sequence
\begin{equation*}
	\Xi_i(t)=-(R(t)+(D^\top\lint P_{i-1}\rint D)(t))^{-1}(S(t)+(D^\top\lint P_{i-1}\rint C)(t)),\ t\in(0,T),\ i\geq2,
\end{equation*}
converges for a.e.\ $t\in(0,T)$. Furthermore, $\sup_{i\in\bN}\|\Xi_i\|_{L^\infty(0,T)}<\infty$.
\item
The sequence
\begin{equation*}
	\Gamma_i(s,t)=-(R(t)+(D^\top\lint P_{i-1}\rint D)(t))^{-1}(B^\top\lint P_{i-1})(s,t),\ (s,t)\in\triangle_2(0,T),\ i\geq2,
\end{equation*}
converges in $L^2(\triangle_2(0,T);\bR^{\ell\times d})$.
\item
The sequences
\begin{equation*}
	Q^{(1)}[\Xi_i](t)=Q(t)+\Xi_i(t)^\top S(t)+S(t)^\top\Xi_i(t)+\Xi_i(t)^\top R(t)\Xi_i(t),\ t\in(0,T),\ i\in\bN,
\end{equation*}
and
\begin{align*}
	&F^{(1)}[\Xi_i;P_i](t)=(C^\top\lint P_i\rint C)(t)+\Xi_i(t)^\top(D^\top\lint P_i\rint C)(t)+(C^\top\lint P_i\rint D)\Xi_i(t)\\
	&\hspace{4cm}+\Xi_i(t)^\top(D^\top\lint P_i\rint D)(t)\Xi_i(t),\ t\in(0,T),\ i\in\bN,
\end{align*}
converge for a.e.\ $t\in(0,T)$. Furthermore, the uniform estimates $\sup_{i\in\bN}\|Q^{(1)}[\Xi_i]\|_{L^\infty(0,T)}<\infty$ and $\sup_{i\in\bN}\|F^{(1)}[\Xi_i;P_i]\|_{L^\infty(0,T)}<\infty$ hold.
\item
The sequences
\begin{equation*}
	Q^{(2)}[\Xi_i,\Gamma_i](s,t)=\Gamma_i(s,t)^\top S(t)+\Gamma_i(s,t)^\top R(t)\Xi_i(t),\ (s,t)\in\triangle_2(0,T),\ i\in\bN,
\end{equation*}
and
\begin{align*}
	&F^{(2)}[\Xi_i,\Gamma_i;P_i](s,t)=(P_i\rint A)(s,t)+(P_i\rint B)(s,t)\Xi_i(t)+\Gamma_i(s,t)^\top(D^\top\lint P_i\rint C)(t)\\
	&\hspace{4cm}+\Gamma_i(s,t)^\top(D^\top\lint P_i\rint D)(t)\Xi_i(t),\ (s,t)\in\triangle_2(0,T),\ i\in\bN,
\end{align*}
converge in $L^2(\triangle_2(0,T);\bR^{d\times d})$.
\item
The sequences
\begin{equation*}
	Q^{(3)}[\Gamma_i](s_1,s_2,t)=\Gamma_i(s_1,t)^\top R(t)\Gamma_i(s_2,t),\ (s_1,s_2,t)\in\square_3(0,T),\ i\in\bN,
\end{equation*}
and
\begin{align*}
	&F^{(3)}[\Gamma_i;P_i](s_1,s_2,t)=\Gamma_i(s_1,t)^\top(B^\top\lint P_i)(s_2,t)+(P_i\rint B)(s_1,t)\Gamma_i(s_2,t)\\
	&\hspace{4cm}+\Gamma_i(s_1,t)^\top(D^\top\lint P_i\rint D)(t)\Gamma_i(s_2,t),\ (s_1,s_2,t)\in\square_3(0,T),\ i\in\bN,
\end{align*}
converge in $L^{2,2,1}_\sym(\square_3(0,T);\bR^{d\times d})$.
\end{itemize}
The above observations and the Lyapunov--Volterra equation \eqref{eq_Lyapunov--Volterra_i} yield that $\{P_i\}_{i\in\bN}$ converges to an element $P=(P^{(1)},P^{(2)})\in\Pi(0,T)$ in the following sense:
\begin{itemize}
\item
$\lim_{i\to\infty}P^{(1)}_i(t)=P^{(1)}(t)$ for a.e.\ $t\in(0,T)$, and $\sup_{i\in\bN}\|P^{(1)}_i\|_{L^\infty(0,T)}<\infty$;
\item
$\lim_{i\to\infty}(P^{(2)}_i(s_1,s_2,s_1\wedge s_2))_{(s_1,s_2)\in(0,T)^2}=(P^{(2)}(s_1,s_2,s_1\wedge s_2))_{(s_1,s_2)\in(0,T)^2}$ in $L^2((0,T)^2;\bR^{d\times d})$, and $\lim_{i\to\infty}\dot{P}^{(2)}_i=\dot{P}^{(2)}$ in $L^{2,2,1}_{\sym}(\square_3(0,T);\bR^{d\times d})$.
\end{itemize}
By the dominated convergence theorem, we see that
\begin{equation*}
	R(t)+(D^\top\lint P\rint D)(t)=\lim_{i\to\infty}(R(t)+(D^\top\lint P_i\rint D)(t))\geq\lambda I_\ell\ \text{for a.e.\ $t\in(0,T)$}.
\end{equation*}
Similarly, we have
\begin{align*}
	&\lim_{i\to\infty}\Xi_i(t)=-(R(t)+(D^\top\lint P\rint D)(t))^{-1}(S(t)+(D^\top\lint P\rint C)(t))=:\check{\Xi}(t)\ \text{for a.e.\ $t\in(0,T)$},\\
	&\lim_{i\to\infty}\Gamma_i(s,t)=-(R(t)+(D^\top\lint P\rint D)(t))^{-1}(B^\top\lint P)(s,t)=:\check{\Gamma}(s,t)\ \text{in $L^2(\triangle_2(0,T);\bR^{d\times d})$},\\
	&\lim_{i\to\infty}Q^{(1)}[\Xi_i](t)=Q^{(1)}[\check{\Xi}](t),\ \lim_{i\to\infty}F^{(1)}[\Xi_i;P_i](t)=F^{(1)}[\check{\Xi};P](t)\ \text{for a.e.\ $t\in(0,T)$},\\
	&\lim_{i\to\infty}Q^{(2)}[\Xi_i,\Gamma_i]=Q^{(2)}[\check{\Xi},\check{\Gamma}],\ \lim_{i\to\infty}F^{(2)}[\Xi_i,\Gamma_i;P_i]=F^{(2)}[\check{\Xi},\check{\Gamma};P]\ \text{in $L^2(\triangle_2(0,T);\bR^{d\times d})$},\\
	&\lim_{i\to\infty}Q^{(3)}[\Gamma_i]=Q^{(3)}[\check{\Gamma}],\ \lim_{i\to\infty}F^{(3)}[\Gamma_i;P_i]=F^{(3)}[\check{\Gamma};P]\ \text{in $L^{2,2,1}_\sym(\square_3(0,T);\bR^{d\times d})$}.
\end{align*}
Consequently, $P=(P^{(1)},P^{(2)})\in\Pi(0,T)$ satisfies the Lyapunov--Volterra equation
\begin{equation*}
	\begin{dcases}
	P^{(1)}(t)=F^{(1)}[\check{\Xi};P](t)+Q^{(1)}[\check{\Xi}](t),\ t\in(0,T),\\
	P^{(2)}(s,t,t)=P^{(2)}(t,s,t)^\top=F^{(2)}[\check{\Xi},\check{\Gamma};P](s,t)+Q^{(2)}[\check{\Xi},\check{\Gamma}](s,t),\ (s,t)\in\triangle_2(0,T),\\
	\dot{P}^{(2)}(s_1,s_2,t)+F^{(3)}[\check{\Gamma};P](s_1,s_2,t)+Q^{(3)}[\check{\Gamma}](s_1,s_2,t)=0,\ (s_1,s_2,t)\in\square_3(0,T).
	\end{dcases}
\end{equation*}
By inserting the formulae
\begin{align*}
	&\check{\Xi}(t)=-(R(t)+(D^\top\lint P\rint D)(t))^{-1}(S(t)+(D^\top\lint P\rint C)(t)),\ t\in(0,T),\\
	&\check{\Gamma}(s,t)=-(R(t)+(D^\top\lint P\rint D)(t))^{-1}(B^\top\lint P)(s,t),\ (s,t)\in\triangle_2(0,T),
\end{align*}
into the above Lyapunov--Volterra equation, we see that $P$ satisfies the Riccati--Volterra equation \eqref{eq_Riccati--Volterra}. Furthermore, since $R(t)+(D^\top\lint P\rint D)(t)\geq\lambda I_\ell$ for a.e.\ $t\in(0,T)$ with $\lambda>0$, the solution is strongly regular. This completes the proof.
\end{proof}


\begin{rem}
The above proof shows that the sequence of the solutions of the Lyapunov--Volterra equations \eqref{eq_Lyapunov--Volterra_i} converges to the strongly regular solution of the Riccati--Volterra equation \eqref{eq_Riccati--Volterra}. This fact is useful in view of the numerical approximations of the (unique) causal feedback optimal strategy and the value functional.
\end{rem}

Combining \cref{cor_unique_optimal_strategy}, \cref{cor_standard_condition} and \cref{theo_strongly_regular_solvability}, we get the following consequence.


\begin{cor}\label{cor_standard_optimal}
Assume that the standard condition \eqref{eq_standard_condition} holds. Then the Riccati--Volterra equation \eqref{eq_Riccati--Volterra} admits a unique strongly regular solution $P=(P^{(1)},P^{(2)})\in\Pi(0,T)$. Consequently, Problem (SVC) has a unique causal feedback optimal strategy $(\hat{\Xi},\hat{\Gamma},\hat{v})\in\cS(0,T)$ given by \eqref{eq_unique_optimal_strategy}.
\end{cor}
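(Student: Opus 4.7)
The plan is to chain together three results already established in the paper, so the proof will be very short and essentially a bookkeeping exercise. First I would invoke \cref{cor_standard_condition} to conclude from the standard condition \eqref{eq_standard_condition} that the cost functional $\cU(0,T)\ni u\mapsto J^0(0,0;u)$ is uniformly convex. Next I would apply the nontrivial direction (i)$\Rightarrow$(iv) of \cref{theo_strongly_regular_solvability} to deduce the existence of a strongly regular solution $P=(P^{(1)},P^{(2)})\in\Pi(0,T)$ to the Riccati--Volterra equation \eqref{eq_Riccati--Volterra}.

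For uniqueness of this strongly regular solution, I would observe that every strongly regular solution is in particular a regular solution, and then appeal to the uniqueness statement for regular solutions (the corollary immediately following the representation formula of $V^0$). This gives the first assertion of the corollary.

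Finally, the second assertion follows directly from \cref{cor_unique_optimal_strategy}: strong regularity of $P$ ensures that $R+D^\top\lint P\rint D$ is uniformly invertible, so the associated Type-II EBSVIE \eqref{eq_Type-II_EBSVIE+} is well-posed by \cref{theo_Type-II_EBSVIE}, and the unique causal feedback optimal strategy $(\hat{\Xi},\hat{\Gamma},\hat{v})\in\cS(0,T)$ is given explicitly by \eqref{eq_unique_optimal_strategy} in terms of $P$ and the adapted solution $(\eta,\zeta)$ of \eqref{eq_Type-II_EBSVIE+}.

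Since all the hard work has been done in \cref{cor_standard_condition}, \cref{theo_strongly_regular_solvability} and \cref{cor_unique_optimal_strategy}, there is no genuine obstacle here; the only thing to be careful about is to explicitly mention the use of the uniqueness of regular solutions to upgrade ``existence'' to ``existence and uniqueness'' of the strongly regular solution, so that the claim in the first sentence of the corollary is fully justified before passing to the causal feedback optimal strategy.
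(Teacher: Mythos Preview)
Your proposal is correct and matches the paper's approach exactly: the paper simply states that the corollary follows by combining \cref{cor_standard_condition}, \cref{theo_strongly_regular_solvability}, and \cref{cor_unique_optimal_strategy}. Your additional remark about invoking the uniqueness of regular solutions to justify the word ``unique'' in the first sentence is a helpful clarification that the paper leaves implicit.
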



\appendix

\section{Appendix: Auxiliary lemmas}\label{appendix}

In this appendix, we prove some auxiliary lemmas used in Sections \ref{section_optimality} and \ref{section_strongly_regular_solvability}.


\begin{lemm}\label{lemm_strategy_bijective}
Let $(\Xi,\Gamma)\in L^\infty(0,T;\bR^{\ell\times d})\times L^2(\triangle_2(0,T);\bR^{\ell\times d})$ be fixed. Then the map $v\mapsto(\Xi,\Gamma,v)^0[0,0]$ is a bijective bounded linear operator on $\cU(0,T)$.
\end{lemm}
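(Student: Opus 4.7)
My plan is to set $\cT v := (\Xi, \Gamma, v)^0[0,0]$ and verify the three required properties. Linearity follows immediately from the linearity of the homogeneous closed-loop system \eqref{eq_closed-loop_system} (with $x = 0$, $b = 0$, $\sigma = 0$) together with uniqueness of the causal feedback solution. Boundedness is given directly by the a priori estimate \eqref{eq_closed-loop_estimate} of \cref{theo_SVIE}, specialised to the zero data, which yields $\|\cT v\|_{L^2_\bF(0,T)} \leq K \|v\|_{L^2_\bF(0,T)}$. Hence the real work lies in proving bijectivity, which I intend to do by exhibiting the inverse explicitly.

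The construction of $\cT^{-1}$ will go as follows. Given $u \in \cU(0,T)$, I first apply \cref{theo_SVIE} to the trivial causal feedback strategy $(0, 0, u) \in \cS(0, T)$ at the zero input condition to obtain the unique pair $(X, \Theta) \in L^2_\bF(0, T; \bR^d) \times L^2_{\bF, \mathrm{c}}(\triangle_2(0, T); \bR^d)$ satisfying the homogeneous SVIE driven by $u$ (note that the outcome for this trivial strategy is $u$ itself). Then I define
$$
v(t) := u(t) - \Xi(t) X(t) - \int_t^T \Gamma(s, t) \Theta(s, t) \, \diff s, \quad t \in (0, T).
$$
Provided $v \in \cU(0, T)$, the triplet $(X, \Theta, u)$ then satisfies the closed-loop system \eqref{eq_closed-loop_system} for the strategy $(\Xi, \Gamma, v)$ at $(t_0, x) = (0, 0)$, so uniqueness in \cref{theo_SVIE} gives $\cT v = u$, establishing surjectivity. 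Injectivity is equally transparent: if $\cT v_1 = u = \cT v_2$, then the pairs $(X, \Theta)$ in both closed-loop systems solve the same homogeneous SVIE driven by the common $u$ and hence coincide; the strategy relation then forces $v_1 = v_2$.

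The main obstacle is precisely the verification that the candidate $v$ above lies in $L^2_\bF(0, T; \bR^\ell)$. The $\Xi X$ term is harmless since $\Xi \in L^\infty$ and $X \in L^2_\bF$. For the Volterra-type integral I plan to use Cauchy--Schwarz pointwise in $t$,
$$
\left| \int_t^T \Gamma(s, t) \Theta(s, t) \, \diff s \right|^2 \leq \left( \int_t^T |\Gamma(s, t)|^2 \, \diff s \right) \left( \int_t^T |\Theta(s, t)|^2 \, \diff s \right),
$$
and then take expectation and integrate in $t$. Fubini's theorem gives $\int_0^T \int_t^T |\Gamma(s, t)|^2 \, \diff s \, \diff t = \|\Gamma\|^2_{L^2(\triangle_2(0, T))}$ for the first factor, while for the second factor the key observation is the uniform-in-$t$ bound
$$
\bE\Bigl[\int_t^T |\Theta(s, t)|^2 \, \diff s \Bigr] \leq \bE\Bigl[\int_0^T \sup_{r \in [0, s]} |\Theta(s, r)|^2 \, \diff s\Bigr] = \|\Theta\|^2_{L^2_{\bF, \mathrm{c}}(\triangle_2(0, T))},
$$
which exploits precisely the sup-in-the-inner-variable structure of the $L^2_{\bF, \mathrm{c}}$ norm. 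Combining these will yield
$$
\Bigl\| \int_\cdot^T \Gamma(s, \cdot) \Theta(s, \cdot) \, \diff s \Bigr\|^2_{L^2_\bF(0, T)} \leq \|\Gamma\|^2_{L^2(\triangle_2(0, T))} \, \|\Theta\|^2_{L^2_{\bF, \mathrm{c}}(\triangle_2(0, T))} < \infty.
$$
Progressive measurability of $v$ is then routine from that of $u$, $X$, and $\Theta(\cdot, t)$ being $\cF_t$-measurable. This completes the plan.
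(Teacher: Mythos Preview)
Your proposal is correct and follows essentially the same approach as the paper: the paper also invokes the a priori estimate \eqref{eq_closed-loop_estimate} for boundedness and linearity, and constructs the inverse exactly as you do, by solving the homogeneous SVIE with control $u$ and then setting $v(t)=u(t)-\Xi(t)X(t)-\int_t^T\Gamma(s,t)\Theta(s,t)\,\diff s$. The paper dismisses the verification that this $v$ lies in $\cU(0,T)$ as ``easy to see'', whereas you have written it out carefully via Cauchy--Schwarz and the $L^2_{\bF,\mathrm c}$ sup-norm bound on $\Theta$; that extra detail is correct and a welcome addition.
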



\begin{proof}
By the uniqueness of the causal feedback solutions of homogeneous controlled SVIEs \eqref{eq_state_0}, together with the a priori estimate \eqref{eq_closed-loop_estimate}, we see that the map $v\mapsto(\Xi,\Gamma,v)^0[0,0]$ is a bounded linear operator on $\cU(0,T)$. It is easy to see that the map $u\mapsto v:=(u(t)-\Xi(t)X(t)-\int^T_t\Gamma(s,t)\Theta(s,t))_{t\in(0,T)}$ with $X$ and $\Theta$ being defined as the solution to the SVIE
\begin{equation*}
	X(t)=\int^t_{0}\{A(t,s)X(s)+B(t,s)u(s)\}\,\diff s+\int^t_{0}\{C(t,s)X(s)+D(t,s)u(s)\}\,\diff W(s),\ t\in(0,T),
\end{equation*}
and
\begin{equation*}
	\Theta(s,t)=\int^t_{0}\{A(s,r)X(r)+B(s,r)u(r)\}\,\diff r+\int^t_{0}\{C(s,r)X(r)+D(s,r)u(r)\}\,\diff W(r),\ (s,t)\in\triangle_2(0,T),
\end{equation*}
is the inverse map of $v\mapsto(\Xi,\Gamma,v)^0[0,0]$. This completes the proof.
\end{proof}


\begin{lemm}\label{lemm_expectation_explicit}
For each $(\Xi,\Gamma)\in L^\infty(0,T;\bR^{\ell\times d})\times L^2(\triangle_2(0,T);\bR^{\ell\times d})$, there exist two measurable maps $G_1:\{(s,t,\theta)\in(0,T)^3\,|\,T>s\geq t\geq \theta>0\}\to\bR^{d\times d}$ and $G_2:\{(s,t,\theta)\in(0,T)^3\,|\,T>s\geq t\geq \theta>0\}\to\bR^{d\times \ell}$ depending on $A,B,C,D,\Xi,\Gamma$ such that the following conditions hold:
\begin{itemize}
\item
For $i=1,2$, $[\theta,s]\ni t\mapsto G_i(s,t,\theta)$ is continuous for a.e.\ $(s,\theta)\in\triangle_2(0,T)$, and the following estimate holds:
\begin{equation}\label{eq_G_i_estimate}
	\int^T_0\!\!\int^s_0\sup_{t\in[\theta,s]}|G_i(s,t,\theta)|^2\,\diff \theta\,\diff s<\infty.
\end{equation}
\item
For each $v\in\cU(0,T)$ and $(t_0,x)\in\cI$, the causal feedback solution $(X^{t_0,x}_0,\Theta^{t_0,x}_0)\in L^2_\bF(t_0,T;\bR^d)\times L^2_{\bF,\mathrm{c}}(\triangle_2(t_0,T);\bR^d)$ to the homogeneous controlled SVIE \eqref{eq_state_0} at the input condition $(t_0,x)$ corresponding to the causal feedback strategy $(\Xi,\Gamma,v)\in\cS(0,T)$ satisfies
\begin{equation}\label{eq_deterministic_VCF}
\begin{split}
	&\bE[X^{t_0,x}_0(t)]=x(t)+\int^t_{t_0}\{G_1(t,t,\theta)x(\theta)+G_2(t,t,\theta)\bE[v(\theta)]\}\,\diff\theta\\
	&\hspace{2cm}+\int^T_{t_0}\!\!\int^{t\wedge\theta}_{t_0}G_2(t,t,r)\Gamma(\theta,r)\,\diff r\,x(\theta)\,\diff\theta,\ t\in(t_0,T),\\
	&\bE[\Theta^{t_0,x}_0(s,t)]=x(s)+\int^t_{t_0}\{G_1(s,t,\theta)x(\theta)+G_2(s,t,\theta)\bE[v(\theta)]\}\,\diff\theta\\
	&\hspace{2cm}+\int^T_{t_0}\!\!\int^{t\wedge\theta}_{t_0}G_2(s,t,r)\Gamma(\theta,r)\,\diff r\,x(\theta)\,\diff\theta,\ (s,t)\in\triangle_2(t_0,T).
\end{split}
\end{equation}
\end{itemize}
\end{lemm}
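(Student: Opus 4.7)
The plan is to take expectations of the homogeneous closed-loop system (which kills every stochastic integral) and then identify $G_1,G_2$ as Green kernels of the resulting deterministic linear Volterra system. Setting $\bar{X}(t):=\bE[X^{t_0,x}_0(t)]$, $\bar{\Theta}(s,t):=\bE[\Theta^{t_0,x}_0(s,t)]$, and $\bar{v}(t):=\bE[v(t)]$, and using the identity $\bar{X}(r)=\bar{\Theta}(r,r)$, the two-parameter function $\bar{\Theta}$ satisfies a linear deterministic Volterra equation on $\{(s,t):T>s\geq t\geq t_0\}$ driven by the pair of inputs $(x,\bar{v})$ with coefficients built only from $A,B,\Xi,\Gamma$ (the coefficients $C,D$ are annihilated by the expectation, which is compatible with the statement). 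Zero-extending $(x,v)$ to $(0,t_0)$ then lets me work with $t_0=0$ throughout, producing universal kernels $G_1,G_2$ independent of the input data.

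Writing $\bar{\Theta}(s,t)=x(s)+\tilde{\Theta}(s,t)$ with $\tilde{\Theta}(\cdot,0)=0$, a direct substitution yields $\tilde{\Theta}=\Phi_{\mathrm{a}}+\Phi_{\mathrm{b}}+\Phi_{\mathrm{c}}+\mathcal{K}\tilde{\Theta}$, where
\begin{equation*}
(\mathcal{K}\Psi)(s,t):=\int_0^t(A(s,r)+B(s,r)\Xi(r))\Psi(r,r)\,\diff r+\int_0^tB(s,r)\!\int_r^T\!\Gamma(\theta,r)\Psi(\theta,r)\,\diff\theta\,\diff r,
\end{equation*}
and the three sources arise respectively from the $x$-piece of $\bar{\Theta}(r,r)$ inside the $(A+B\Xi)$-term, the $x$-piece of $\bar{\Theta}(\theta,r)$ inside the $\Gamma$-feedback, and the control: $\Phi_{\mathrm{a}}(s,t)=\int_0^t(A(s,r)+B(s,r)\Xi(r))x(r)\,\diff r$, $\Phi_{\mathrm{b}}(s,t)=\int_0^tB(s,r)\!\int_r^T\!\Gamma(\theta,r)x(\theta)\,\diff\theta\,\diff r$, and $\Phi_{\mathrm{c}}(s,t)=\int_0^tB(s,r)\bar{v}(r)\,\diff r$. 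Since $\mathcal{K}$ is Volterra-type, Picard iteration (or the deterministic analogue of the a priori estimate \eqref{eq_closed-loop_estimate}) shows that $(I-\mathcal{K})^{-1}$ is bounded on the natural ``$L^2$-in-$(s,\theta)$, $L^\infty$-in-$t$'' function space. Applying $(I-\mathcal{K})^{-1}$ to the two model sources $w^{(1)}\mapsto\int_0^{\,\cdot\,}(A(\cdot,r)+B(\cdot,r)\Xi(r))w^{(1)}(r)\,\diff r$ (with $w^{(1)}\in L^2(0,T;\bR^d)$) and $w^{(2)}\mapsto\int_0^{\,\cdot\,}B(\cdot,r)w^{(2)}(r)\,\diff r$ (with $w^{(2)}\in L^2(0,T;\bR^\ell)$) yields, by linearity of the equation in $w^{(i)}$, universal measurable kernels $G_1(s,t,\theta)\in\bR^{d\times d}$ and $G_2(s,t,\theta)\in\bR^{d\times\ell}$ such that the respective responses equal $\int_0^tG_i(s,t,\theta)w^{(i)}(\theta)\,\diff\theta$.

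By linearity $(I-\mathcal{K})^{-1}\Phi_{\mathrm{a}}(s,t)=\int_{t_0}^tG_1(s,t,\theta)x(\theta)\,\diff\theta$ and $(I-\mathcal{K})^{-1}\Phi_{\mathrm{c}}(s,t)=\int_{t_0}^tG_2(s,t,\theta)\bar{v}(\theta)\,\diff\theta$, recovering the first bracket of \eqref{eq_deterministic_VCF}. For $\Phi_{\mathrm{b}}$, using the $G_2$-representation with the effective control $w^{(2)}(r):=\int_r^T\Gamma(\theta,r)x(\theta)\,\diff\theta$ gives $\int_{t_0}^tG_2(s,t,r)\!\int_r^T\!\Gamma(\theta,r)x(\theta)\,\diff\theta\,\diff r$; exchanging the order of integration on $\{t_0<r<t,\,r<\theta<T\}=\{t_0<\theta<T,\,t_0<r<t\wedge\theta\}$ produces exactly the third term of \eqref{eq_deterministic_VCF}. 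Adding back $x(s)$ recovers the full formula. The main technical obstacle is verifying that $G_1,G_2$ are continuous in the middle variable $t\in[\theta,s]$ for a.e.\ $(s,\theta)\in\triangle_2(0,T)$ and satisfy the quantitative estimate \eqref{eq_G_i_estimate}. Continuity is inherited iterate-by-iterate, since in each Picard iterate $t$ enters only through the upper limit of a Volterra integral, and dominated convergence using $A,B,\Gamma\in L^2$ passes the property to the limit. The estimate \eqref{eq_G_i_estimate} is the more delicate point and is essentially the deterministic counterpart of the $L^2_{\bF,\mathrm{c}}$-bound in \eqref{eq_closed-loop_estimate}: iterating $\mathcal{K}$ against the two model sources and summing the resulting Neumann series in the norm $\|K\|^2:=\int_0^T\!\int_0^s\sup_{t\in[\theta,s]}|K(s,t,\theta)|^2\,\diff\theta\,\diff s$ yields \eqref{eq_G_i_estimate}.
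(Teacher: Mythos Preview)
Your approach is correct and shares the same first move (take expectations to obtain a deterministic linear Volterra system), but the execution differs from the paper's. The paper does not work directly with the two-parameter operator $\mathcal{K}$ you introduce; instead it invokes \cite[Lemma A.4]{HaWa1} to reduce everything to a standard \emph{one-parameter} Volterra equation for the stacked vector $\bar{\bX}(t)=(\bar{X}(t),\bar{u}(t))^\top$ with kernel $\bA\in L^2(\triangle_2(0,T);\bR^{(d+\ell)\times(d+\ell)})$ built from $A,B,\Xi,\Gamma$. It then appeals to classical resolvent theory (\cite[Chapter 9]{GrLoSt90}) to produce $\bF\in L^2(\triangle_2(0,T))$, writes the variation of constants formula for $\bar{\bX}$, and reconstructs $\bar{\Theta}(s,t)=x(s)+\int_{t_0}^t(A(s,r),B(s,r))\bar{\bX}(r)\,\diff r$. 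This yields explicit closed-form expressions for $G_1,G_2$ as $A,B$ plus an $r$-integral of $A,B$ against the block entries of $\bF$, from which continuity in $t$ and the estimate \eqref{eq_G_i_estimate} follow by inspection.

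Your route via Neumann series for $(I-\mathcal{K})^{-1}$ on the two-parameter space is more self-contained (no external lemma) and conceptually clean, but it pushes more work into the verification of \eqref{eq_G_i_estimate}: you must check that the Neumann iterates of $\mathcal{K}$ applied to the model sources are summable in the norm $\int_0^T\!\int_0^s\sup_{t\in[\theta,s]}|\cdot|^2\,\diff\theta\,\diff s$, which is doable but not as immediate as the paper's one-line ``it is easy to see'' from an explicit formula. Your observation that $C,D$ do not actually enter $G_1,G_2$ is correct and matches the paper's construction; the statement's ``depending on $A,B,C,D,\Xi,\Gamma$'' is an overstatement.
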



\begin{proof}
Let $(t_0,x)\in\cI$ be fixed, and let $(X^{t_0,x}_0,\Theta^{t_0,x}_0,u^{t_0,x}_0)\in L^2_\bF(t_0,T;\bR^d)\times L^2_{\bF,\mathrm{c}}(\triangle_2(t_0,T);\bR^d)\times U(t_0,T)$ be the causal feedback solution to the homogeneous controlled SVIE \eqref{eq_state_0} at $(t_0,x)$ corresponding to $(\Xi,\Gamma,v)\in\cS(0,T)$. Denote $\bar{v}:=\bE[v]$ and $(\bar{X},\bar{\Theta},\bar{u}):=(\bE[X^{t_0,x}_0],\bE[\Theta^{t_0,x}_0],\bE[u^{t_0,x}_0])$. By \cite[Lemma A.4]{HaWa1}, we have
\begin{equation*}
	\begin{dcases}
	\begin{pmatrix}\bar{X}(t)\\\bar{u}(t)\end{pmatrix}=\bar{\bX}(t),\ t\in(t_0,T),\\
	\bar{\Theta}(s,t)=x(s)+\int^t_{t_0}(A(s,r),B(s,r))\bar{\bX}(r)\,\diff r,\ (s,t)\in\triangle(t_0,T),
	\end{dcases}
\end{equation*}
where $\bar{\bX}\in L^2(t_0,T;\bR^{d+\ell})$ is the solution to the following deterministic Volterra equation:
\begin{equation*}
	\bar{\bX}(t)=\begin{pmatrix}x(t)\\x^{\Xi,\Gamma}(t)+\bar{v}(t)\end{pmatrix}+\int^t_{t_0}\bA(t,s)\bar{\bX}(s)\,\diff s,\ t\in(t_0,T).
\end{equation*}
Here, $x^{\Xi,\Gamma}\in L^2(t_0,T;\bR^\ell)$ and $\bA\in L^2(\triangle_2(0,T);\bR^{(d+\ell)\times(d+\ell)})$ are defined by
\begin{equation*}
	x^{\Xi,\Gamma}(t):=\Xi(t)x(t)+\int^T_t\Gamma(s,t)x(s)\,\diff s,\ t\in(t_0,T),
\end{equation*}
and
\begin{equation*}
	\bA(t,s):=\begin{pmatrix}A(t,s)&B(t,s)\\\Xi(t)A(t,s)+\int^T_t\Gamma(r,t)A(r,s)\,\diff r&\Xi(t)B(t,s)+\int^T_t\Gamma(r,t)B(r,s)\,\diff r\end{pmatrix},\ (t,s)\in\triangle_2(0,T),
\end{equation*}
respectively. From the general theory on deterministic Volterra equations (see \cite[Chapter 9]{GrLoSt90} or \cite{Ha21+++}), the kernel $\bA$ has a resolvent $\bF$ in $L^2(\triangle_2(0,T);\bR^{(d+\ell)\times(d+\ell)})$, and $\bar{\bX}$ is given by the variation of constants formula:
\begin{equation*}
	\bar{\bX}(t)=\begin{pmatrix}x(t)\\x^{\Xi,\Gamma}(t)+\bar{v}(t)\end{pmatrix}+\int^t_{t_0}\bF(t,s)\begin{pmatrix}x(s)\\x^{\Xi,\Gamma}(s)+\bar{v}(s)\end{pmatrix}\,\diff s,\ t\in(t_0,T).
\end{equation*}
Thus, we have
\begin{align*}
	\bar{\Theta}(s,t)&=x(s)+\int^t_{t_0}(A(s,r),B(s,r))\bar{\bX}(r)\,\diff r\\
	&=x(s)+\int^t_{t_0}(A(s,r),B(s,r))\Bigl\{\begin{pmatrix}x(r)\\x^{\Xi,\Gamma}(r)+\bar{v}(r)\end{pmatrix}+\int^r_{t_0}\bF(r,\theta)\begin{pmatrix}x(\theta)\\x^{\Xi,\Gamma}(\theta)+\bar{v}(\theta)\end{pmatrix}\,\diff\theta\Bigr\}\,\diff r\\
	&=x(s)+\int^t_{t_0}\Bigl\{(A(s,\theta),B(s,\theta))+\int^t_\theta(A(s,r),B(s,r))\bF(r,\theta)\,\diff r\Bigr\}\begin{pmatrix}x(\theta)\\x^{\Xi,\Gamma}(\theta)+\bar{v}(\theta)\end{pmatrix}\,\diff\theta\\
	&=x(s)+\int^t_{t_0}(\tilde{G}_1(s,t,\theta),\tilde{G}_2(s,t,\theta))\begin{pmatrix}x(\theta)\\x^{\Xi,\Gamma}(\theta)+\bar{v}(\theta)\end{pmatrix}\,\diff\theta
\end{align*}
for $(s,t)\in\triangle_2(t_0,T)$, where $\tilde{G}_i$, $i=1,2$, are defined by
\begin{align*}
	&\tilde{G}_1(s,t,\theta):=A(s,\theta)+\int^t_\theta\{A(s,r)F_{1,1}(r,\theta)+B(s,r)F_{2,1}(r,\theta)\}\,\diff r,\\
	&\tilde{G}_2(s,t,\theta):=B(s,\theta)+\int^t_\theta\{A(s,r)F_{1,2}(r,\theta)+B(s,r)F_{2,2}(r,\theta)\}\,\diff r,
\end{align*}
for $T>s\geq t\geq\theta>0$, with the notation $\bF(t,s)=\begin{pmatrix}F_{1,1}(t,s)&F_{1,2}(t,s)\\F_{2,1}(t,s)&F_{2,2}(t,s)\end{pmatrix}$, $(t,s)\in\triangle_2(0,T)$. Define
\begin{equation*}
	G_1(s,t,\theta):=\tilde{G}_1(s,t,\theta)+\tilde{G}_2(s,t,\theta)\Xi(\theta),\ G_2(s,t,\theta):=\tilde{G}_2(s,t,\theta)
\end{equation*}
For $T>s\geq t\geq\theta>0$. It is easy to see that the estimates \eqref{eq_G_i_estimate} hold for $i=1,2$, and the representation of $\bar{\Theta}=\bE[\Theta^{t_0,x}_0]$ in \eqref{eq_deterministic_VCF} holds. Noting that $\bE[X^{t_0,x}_0(t)]=\bar{X}(t)=\bar{\Theta}(t,t)$, we get the representation for $\bE[X^{t_0,x}_0]$.
\end{proof}


\begin{lemm}\label{lemm_MN=0}
Let $(\Xi,\Gamma)\in L^\infty(0,T;\bR^{\ell\times d})\times L^2(\triangle_2(0,T);\bR^{\ell\times d})$ be fixed. Let $M_1\in L^\infty(0,T;\bR^{\ell'\times d})$ and $M_2\in L^2(\triangle_2(0,T);\bR^{\ell'\times d})$ with $\ell'\in\bN$. Assume that, for any $(t_0,x)\in\cI$,
\begin{equation}\label{eq_MN=0}
	M_1(t)\bE[X^{t_0,x}_0(t)]+\int^T_tM_2(s,t)\bE[\Theta^{t_0,x}_0(s,t)]\,\diff s=0\ \text{for a.e.\ $t\in(t_0,T)$},
\end{equation}
where $(X^{t_0,x}_0,\Theta^{t_0,x}_0)$ is the causal feedback solution to the homogeneous controlled SVIE \eqref{eq_state_0} at the input condition $(t_0,x)$ corresponding to the causal feedback strategy $(\Xi,\Gamma,0)\in\cS(0,T)$. Then it holds that
\begin{equation*}
	M_1(t)=0\ \text{for a.e.\ $t\in(0,T)$ and}\ M_2(s,t)=0\ \text{for a.e.\ $(s,t)\in\triangle_2(0,T)$}.
\end{equation*}
\end{lemm}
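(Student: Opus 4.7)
The plan is to substitute the explicit formulae of \cref{lemm_expectation_explicit} into the hypothesis \eqref{eq_MN=0}, rewrite it as a functional-analytic identity on $L^2$, and then exploit compactness together with a shrinking-interval argument in $t_0$.

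\textbf{Step 1 (set-up).} For each fixed $t_0\in[0,T)$, the hypothesis states that the bounded linear operator
\begin{equation*}
	\cL_{t_0}(x)(t):=M_1(t)\bE[X^{t_0,x}_0(t)]+\int^T_tM_2(s,t)\bE[\Theta^{t_0,x}_0(s,t)]\,\diff s
\end{equation*}
from $L^2(t_0,T;\bR^d)$ to $L^2(t_0,T;\bR^{\ell'})$ (boundedness follows from \eqref{eq_closed-loop_estimate}) is identically zero. Substituting \eqref{eq_deterministic_VCF} (with $v=0$) and absorbing the backward-Volterra term $\int^T_tM_2(s,t)x(s)\,\diff s$ into the kernel via $\1_{\theta>t}M_2(\theta,t)$ gives
\begin{equation*}
	\cL_{t_0}(x)(t)=M_1(t)x(t)+\int^T_{t_0}\widetilde{\cK}_{t_0}(t,\theta)\,x(\theta)\,\diff\theta=0,
\end{equation*}
where $\widetilde{\cK}_{t_0}$ is an explicit kernel built from $M_1,M_2,G_1,G_2$ and $\Gamma$.

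\textbf{Step 2 (extracting $M_1$).} Using Cauchy--Schwarz together with \eqref{eq_G_i_estimate} and the square-integrability of $M_2$ and $\Gamma$ on $\triangle_2(0,T)$, one verifies that $\widetilde{\cK}_{t_0}\in L^2((t_0,T)^2;\bR^{\ell'\times d})$; hence the associated integral operator is Hilbert--Schmidt, in particular compact, on $L^2(t_0,T;\bR^d)$. The identity $\cL_{t_0}=0$ therefore forces the multiplication operator $x\mapsto M_1(\cdot)x(\cdot)$ to be compact. A standard argument---using normalised indicator functions of shrinking disjoint subsets of $\{|M_1|\geq c\}$ together with a pigeonhole covering of the unit sphere of $\bR^d$ to select a common testing direction---shows that multiplication by a matrix-valued $L^\infty$ function is compact on $L^2$ if and only if the function vanishes a.e.; taking $t_0=0$ yields $M_1\equiv 0$ on $(0,T)$.

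\textbf{Step 3 (extracting $M_2$).} With $M_1\equiv 0$, the identity $\widetilde{\cK}_{t_0}=0$ a.e.\ on $(t_0,T)^2$ persists. Restricted to $\theta>t$ (where the $G_1$-contribution from \eqref{eq_deterministic_VCF} vanishes), this reads
\begin{equation*}
	M_2(\theta,t)+\int^T_t M_2(s,t)\Bigl(\int^t_{t_0}G_2(s,t,r)\Gamma(\theta,r)\,\diff r\Bigr)\diff s=0
\end{equation*}
for a.e.\ $t\in(t_0,T)$ and a.e.\ $\theta\in(t,T)$, for every $t_0\in[0,T)$. A Cauchy--Schwarz estimate shows that the Hilbert--Schmidt norm of the kernel $(s,\theta)\mapsto\int^t_{t_0}G_2(s,t,r)\Gamma(\theta,r)\,\diff r$ on $L^2((t,T)^2)$ is bounded by
\begin{equation*}
	\Bigl(\int^T_t\!\!\int^t_{t_0}|G_2(s,t,r)|^2\,\diff r\,\diff s\Bigr)^{1/2}\Bigl(\int^T_t\!\!\int^t_{t_0}|\Gamma(\theta,r)|^2\,\diff r\,\diff\theta\Bigr)^{1/2},
\end{equation*}
which tends to $0$ as $t_0\uparrow t$ by absolute continuity of the Lebesgue integral (both integrals are finite for a.e.\ $t$ by Fubini combined with \eqref{eq_G_i_estimate} and the membership $\Gamma\in L^2(\triangle_2(0,T))$). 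Fixing a countable dense subset $\{t_0^{(n)}\}_n\subset[0,T)$ (to obtain a uniform full-measure set in $t$) and letting $t_0^{(n)}\uparrow t$, the associated operator $\cT^{t_0^{(n)},t}$ becomes a strict contraction on $L^2(t,T;\bR^{\ell'\times d})$ for large $n$, whence $(I+\cT^{t_0^{(n)},t})M_2(\cdot,t)=0$ forces $M_2(\cdot,t)=0$ for a.e.\ $t$. Consequently $M_2\equiv 0$ on $\triangle_2(0,T)$.

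\textbf{Main obstacle.} The technical crux is to manage the ``for a.e.\ $t$'' nature of the hypothesis: since the null $t$-set a priori depends on both $t_0$ and $x$, one must invoke continuity in $x$ together with countable dense families (first in $L^2(t_0,T;\bR^d)$, then in $t_0\in[0,T)$) before any limit $t_0\uparrow t$ can be effected. A second subtle point is the compactness characterisation of the multiplication operator by a matrix-valued $L^\infty$ function in Step 2, which requires the pigeonhole reduction mentioned above to a scalar-type non-compactness mechanism.
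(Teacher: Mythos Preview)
Your proof is correct and reaches the same conclusion, but the method is genuinely different from the paper's.

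The paper argues directly with explicit approximate-identity test functions and the Lebesgue differentiation theorem. For $M_1$, it plugs $x=N\1_{[\tau,\tau+1/N]}x_0$ into the identity $M_1(t)x(t)+\int_0^T\cM(t,\theta)x(\theta)\,\diff\theta=0$ (with $\cM\in L^2$), integrates over $t\in(\tau,\tau+1/N)$, and lets $N\to\infty$; the diagonal term converges to $M_1(\tau)x_0$ while the off-diagonal term is killed by the Cauchy--Schwarz bound $(\int_\tau^{\tau+1/N}\!\int_\tau^{\tau+1/N}|\cM|^2)^{1/2}\to0$. For $M_2$, the paper repeats this two-parameter localisation, taking $t_0=\tau_2$ and $x=\1_{[\tau_1,\tau_1+1/N]}x_0$ with $\tau_1>\tau_2$, integrating over $t\in(\tau_2,\tau_2+1/N)$, multiplying by $N^2$, and again invoking Lebesgue differentiation; the cross term is controlled by a product of an $o(1)$ factor in $\cN$ and a Lebesgue-differentiation factor in $\Gamma$.

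Your route is operator-theoretic: for $M_1$ you observe that the integral piece is Hilbert--Schmidt, so $M_1$ must define a compact multiplication operator and is therefore zero; for $M_2$ you exploit the free parameter $t_0$ to drive the Hilbert--Schmidt norm of the perturbation kernel to zero and invoke invertibility of $I+\cT^{t_0,t}$. Your approach is conceptually cleaner and avoids the double Lebesgue-differentiation bookkeeping, at the price of importing the compactness characterisation of matrix-valued multiplication operators (your pigeonhole reduction handles this correctly) and the countable-$t_0$ device to manage the a.e.-$t$ exceptional sets. The paper's approach is more elementary, needing only Cauchy--Schwarz and Lebesgue differentiation, but is heavier on explicit estimates. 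Both are complete.
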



\begin{proof}
First, we prove that $M_1(t)=0$ for a.e.\ $t\in(0,T)$. By the assumption, together with \cref{lemm_expectation_explicit}, for any $x\in L^2(0,T;\bR^d)$,
\begin{align*}
	0&=M_1(t)\Bigl\{x(t)+\int^t_0G_1(t,t,\theta)x(\theta)\,\diff\theta+\int^T_0\!\!\int^{t\wedge\theta}_0G_2(t,t,r)\Gamma(\theta,r)\,\diff r\,x(\theta)\,\diff\theta\Bigr\}\\
	&\hspace{0.5cm}+\int^T_tM_2(s,t)\Bigl\{x(s)+\int^t_0G_1(s,t,\theta)x(\theta)\,\diff\theta+\int^T_0\!\!\int^{t\wedge\theta}_0G_2(s,t,r)\Gamma(\theta,r)\,\diff r\,x(\theta)\,\diff\theta\Bigr\}\,\diff s\\
	&=M_1(t)x(t)+\int^T_0\cM(t,\theta)x(\theta)\,\diff \theta
\end{align*}
for a.e.\ $t\in(0,T)$, where
\begin{align*}
	\cM(t,\theta)&:=\Bigl\{M_1(t)G_1(t,t,\theta)+\int^T_tM_2(s,t)G_1(s,t,\theta)\,\diff s\Bigr\}\1_{\triangle_2(0,T)}(t,\theta)+M_2(\theta,t)\1_{\triangle_2(0,T)}(\theta,t)\\
	&\hspace{0.5cm}+\int^{t\wedge \theta}_0\Bigl\{M_1(t)G_2(t,t,r)+\int^T_tM_2(s,t)G_2(s,t,r)\,\diff s\Bigr\}\Gamma(\theta,r)\,\diff r,\ (t,\theta)\in(0,T)^2.
\end{align*}
Noting the estimate \eqref{eq_G_i_estimate}, it is easy to see that $\cM\in L^2((0,T)^2;\bR^{\ell'\times d})$. Considering $x(t)=N\1_{[\tau,\tau+1/N]}(t)x$, $t\in(0,T)$, with $\tau\in(0,T)$, $N\in\bN$ with $\tau+1/N<T$ and $x\in\bR^d$ being arbitrary, and then integrating with respect to $t\in(\tau,\tau+1/N)$, we get
\begin{equation*}
	N\int^{\tau+1/N}_\tau M_1(t)x\,\diff t+N\int^{\tau+1/N}_\tau\!\!\int^{\tau+1/N}_\tau \cM(t,\theta)x\,\diff\theta\,\diff t=0,
\end{equation*}
and hence
\begin{equation*}
	N\int^{\tau+1/N}_\tau M_1(t)\,\diff t+N\int^{\tau+1/N}_\tau\!\!\int^{\tau+1/N}_\tau \cM(t,\theta)\,\diff\theta\,\diff t=0.
\end{equation*}
By the Lebesgue differentiation theorem, we have $\lim_{N\to\infty}N\int^{\tau+1/N}_\tau M_1(t)\,\diff t=M_1(\tau)$ for a.e.\ $\tau\in(0,T)$. On the other hand, for any $\tau\in(0,T)$,
\begin{equation*}
	N\int^{\tau+1/N}_\tau\!\!\int^{\tau+1/N}_\tau |\cM(t,\theta)|\,\diff\theta\,\diff t\leq\Bigl(\int^{\tau+1/N}_\tau\!\!\int^{\tau+1/N}_\tau |\cM(t,\theta)|^2\,\diff\theta\,\diff t\Bigr)^{1/2}\to0
\end{equation*}
as $N\to\infty$. Therefore, we get $M_1(t)=0$ for a.e.\ $t\in(0,T)$.

Next, we show that $M_2(s,t)=0$ for a.e.\ $(s,t)\in\triangle_2(0,T)$. By the assumption \eqref{eq_MN=0}, together with \cref{lemm_expectation_explicit} and $M_1=0$, for any $(t_0,x)\in\cI$,
\begin{equation*}
	\int^T_tM_2(s,t)\Bigl\{x(s)+\int^t_{t_0}G_1(s,t,\theta)x(\theta)\,\diff\theta+\int^T_{t_0}\!\!\int^{t\wedge\theta}_{t_0}G_2(s,t,r)\Gamma(\theta,r)\,\diff r\,x(\theta)\,\diff\theta\Bigr\}\,\diff s=0
\end{equation*}
for a.e.\ $t\in(t_0,T)$. Let $x\in\bR^d$, $(\tau_1,\tau_2)\in\triangle_2(0,T)$, and $N\in\bN$ with $1/N<\min\{T-\tau_1,\tau_1-\tau_2\}$ be fixed. Consider $(t_0,x)\in\cI$ with $t_0=\tau_2$ and $x(t)=\1_{[\tau_1,\tau_1+1/N]}(t)x$, $t\in(\tau_2,T)$. Then for a.e.\ $t\in(\tau_2,\tau_2+1/N)$,
\begin{equation*}
	\Bigl\{\int^{\tau_1+1/N}_{\tau_1}M_2(s,t)\,\diff s+\int^T_tM_2(s,t)\int^{\tau_1+1/N}_{\tau_1}\!\!\int^t_{\tau_2}G_2(s,t,r)\Gamma(\theta,r)\,\diff r\,\diff\theta\,\diff s\Bigr\}x=0,
\end{equation*}
and thus
\begin{equation*}
	\int^{\tau_1+1/N}_{\tau_1}M_2(s,t)\,\diff s+\int^{\tau_1+1/N}_{\tau_1}\!\!\int^t_{\tau_2}\cN(t,r)\Gamma(\theta,r)\,\diff r\,\diff\theta=0,
\end{equation*}
where
\begin{equation*}
	\cN(t,r):=\int^T_tM_2(s,t)G_2(s,t,r)\,\diff s,\ (t,r)\in\triangle_2(0,T).
\end{equation*}
Noting the estimate \eqref{eq_G_i_estimate}, it is easy to see that $\cN\in L^2(\triangle_2(0,T);\bR^{\ell'\times d})$. Integrating both sides with respect to $t\in(\tau_2,\tau_2+1/N)$ and multiplying $N^2$, we get
\begin{equation*}
	N^2\int^{\tau_2+1/N}_{\tau_2}\!\!\int^{\tau_1+1/N}_{\tau_1}M_2(s,t)\,\diff s\!\,\diff t+N^2\int^{\tau_2+1/N}_{\tau_2}\!\!\int^{\tau_1+1/N}_{\tau_1}\int^t_{\tau_2}\cN(t,r)\Gamma(\theta,r)\,\diff r\,\diff\theta\,\diff t=0.
\end{equation*}
On one hand, by the Lebesgue differentiation theorem, we have
\begin{equation*}
	\lim_{N\to\infty}N^2\int^{\tau_2+1/N}_{\tau_2}\!\!\int^{\tau_1+1/N}_{\tau_1}M_2(s,t)\,\diff s\!\,\diff t=M_2(\tau_1,\tau_2)
\end{equation*}
for a.e.\ $(\tau_1,\tau_2)\in\triangle_2(0,T)$. On the other hand, for any $(\tau_1,\tau_2)\in\triangle_2(0,T)$,
\begin{align*}
	&\Bigl|N^2\int^{\tau_2+1/N}_{\tau_2}\!\!\int^{\tau_1+1/N}_{\tau_1}\int^t_{\tau_2}\cN(t,r)\Gamma(\theta,r)\,\diff r\,\diff\theta\,\diff t\Bigr|\\
	&\leq N^2\int^{\tau_2+1/N}_{\tau_2}\!\!\int^{\tau_1+1/N}_{\tau_1}\Bigl(\int^t_{\tau_2}|\cN(t,r)|^2\,\diff r\Bigr)^{1/2}\Bigl(\int^{\tau_2+1/N}_{\tau_2}|\Gamma(\theta,r)|^2\,\diff r\Bigr)^{1/2}\,\diff\theta\,\diff t\\
	&=N^2\int^{\tau_2+1/N}_{\tau_2}\Bigl(\int^t_{\tau_2}|\cN(t,r)|^2\,\diff r\Bigr)^{1/2}\,\diff t\,\int^{\tau_1+1/N}_{\tau_1}\Bigl(\int^{\tau_2+1/N}_{\tau_2}|\Gamma(\theta,r)|^2\,\diff r\Bigr)^{1/2}\,\diff\theta\\
	&\leq\Bigl(\int^{\tau_2+1/N}_{\tau_2}\!\!\int^t_{\tau_2}|\cN(t,r)|^2\,\diff r\!\,\diff t\Bigr)^{1/2}\,\Bigl(N^2\int^{\tau_1+1/N}_{\tau_1}\!\!\int^{\tau_2+1/N}_{\tau_2}|\Gamma(\theta,r)|^2\,\diff r\!\,\diff\theta\Bigr)^{1/2}.
\end{align*}
We have
\begin{equation*}
	\lim_{N\to\infty}\int^{\tau_2+1/N}_{\tau_2}\!\!\int^t_{\tau_2}|\cN(t,r)|^2\,\diff r\!\,\diff t=0
\end{equation*}
for any $\tau_2\in(0,T)$ and, by the Lebesgue differentiation theorem,
\begin{equation*}
	\lim_{N\to\infty}N^2\int^{\tau_1+1/N}_{\tau_1}\!\!\int^{\tau_2+1/N}_{\tau_2}|\Gamma(\theta,r)|^2\,\diff r\!\,\diff\theta=|\Gamma(\tau_1,\tau_2)|^2<\infty
\end{equation*}
for a.e.\ $(\tau_1,\tau_2)\in\triangle_2(0,T)$. Therefore,
\begin{equation*}
	\lim_{N\to\infty}N^2\int^{\tau_2+1/N}_{\tau_2}\!\!\int^{\tau_1+1/N}_{\tau_1}\int^t_{\tau_2}\cN(t,r)\Gamma(\theta,r)\,\diff r\,\diff\theta\,\diff t=0
\end{equation*}
for a.e.\ $(\tau_1,\tau_2)\in\triangle_2(0,T)$. Consequently, we have $M_2(\tau_1,\tau_2)=0$ for a.e.\ $(\tau_1,\tau_2)\in\triangle_2(0,T)$. This completes the proof.
\end{proof}


\end{document}